\documentclass[12pt,A4paper]{article}

\usepackage[left=32mm,right=32mm,top=30mm,bottom=32mm]{geometry}
\usepackage{labelfig}
\usepackage{epsfig}
\usepackage{epstopdf}

\usepackage{xfrac}

\usepackage{color}
\usepackage{amsthm,amsmath,amssymb}
\usepackage{booktabs}
\usepackage{txfonts}
\usepackage{mathpazo}
\usepackage{microtype}
\usepackage{overpic}
\usepackage{bm}
\usepackage{sectsty}
\usepackage[
	pdftitle={PDFTitle},
	pdfauthor={Hugo Parlier},
	ocgcolorlinks,
	linkcolor=linkred,
	citecolor=linkred,
	urlcolor=linkblue]
{hyperref}

\definecolor{linkred}{rgb}{0.48,0.1,0.05}
\definecolor{linkblue}{RGB}{16, 78, 139}

\usepackage[hang,flushmargin]{footmisc}
\usepackage{enumitem}
\usepackage{titlesec}
	\titlespacing{\section}{0pt}{12pt}{0pt}
	\titlespacing{\subsection}{0pt}{6pt}{0pt}
	
\titlelabel{\thetitle.\quad}

\makeatletter 

\long\def\@footnotetext#1{%
\H@@footnotetext{%
\ifHy@nesting 
\hyper@@anchor{\@currentHref}{#1}%
\else 
\Hy@raisedlink{\hyper@@anchor{\@currentHref}{\relax}}#1%
\fi 
}}

\def\@footnotemark{%
\leavevmode 
\ifhmode\edef\@x@sf{\the\spacefactor}\nobreak\fi 
\H@refstepcounter{Hfootnote}%
\hyper@makecurrent{Hfootnote}%
\hyper@linkstart{link}{\@currentHref}%
\@makefnmark 
\hyper@linkend 
\ifhmode\spacefactor\@x@sf\fi 
\relax 
}%

\ifFN@multiplefootnote%
\renewcommand*\@footnotemark{%
\leavevmode 
\ifhmode 
\edef\@x@sf{\the\spacefactor}%
\FN@mf@check 
\nobreak 
\fi 
\H@refstepcounter{Hfootnote}%
\hyper@makecurrent{Hfootnote}%
\hyper@linkstart{link}{\@currentHref}%
\@makefnmark 
\hyper@linkend 
\ifFN@pp@towrite 
\FN@pp@writetemp 
\FN@pp@towritefalse 
\fi 
\FN@mf@prepare 
\ifhmode\spacefactor\@x@sf\fi 
\relax%
}%
\fi 

\makeatother 

\theoremstyle{plain}
\newtheorem{theorem}{Theorem}[section]
\newtheorem{proposition}[theorem]{Proposition}
\newtheorem{lemma}[theorem]{Lemma}
\newtheorem{corollary}[theorem]{Corollary}
\newtheorem{conjecture}[theorem]{Conjecture}

\theoremstyle{definition}

\newcommand{\Z}{{\mathbb Z}}

\newcommand{\diam}{{\rm diam}}

\newcommand{\contract}{\mathord{\varparallelinv}}

\newcommand{\FS}{\mathcal F {(\Sigma_n)}}

\newcommand{\FCO}{\mathcal F{ (\Gamma_1)}}
\newcommand{\MF}{\mathcal M \mathcal F}
\newcommand{\MFS}{{\mathcal M \mathcal F} (\Sigma_n)}

\newcommand{\MFC}{{\mathcal M \mathcal F} (\Gamma_n)}

\newcommand{\mcg}{{\rm Mod}}

\linespread{1.25}

\sectionfont{\large \sc}
\subsectionfont{\normalsize}

\setlength{\parindent}{0pt}
\setlength{\parskip}{6pt}


\long\def\symbolfootnote[#1]#2{\begingroup%
\def\thefootnote{\fnsymbol{footnote}}\footnote[#1]{#2}\endgroup}

\def\blfootnote{\xdef\@thefnmark{}\@footnotetext}

\begin{document}

{\Large \bfseries \sc Flip-graph moduli spaces of filling surfaces}

{\bfseries Hugo Parlier\symbolfootnote[1]{\normalsize Research supported by Swiss National Science Foundation grant PP00P2\textunderscore 128557
}, Lionel Pournin\symbolfootnote[2]{\normalsize Research funded by the ANR project IComb (grant ANR-08-JCJC-0011)\\
{\em Key words:} flip-graphs, triangulations of surfaces, combinatorial moduli spaces
}}

\vspace{0.2cm}
{\em Abstract.}

This paper is about the geometry of flip-graphs associated to triangulations of surfaces. More precisely, we consider a topological surface with a privileged boundary curve and study the spaces of its triangulations with $n$ vertices on the boundary curve. The surfaces we consider topologically fill this boundary curve so we call them {\it filling surfaces}. The associated flip-graphs are infinite whenever the mapping class group of the surface (the group of self-homeomorphisms up to isotopy) is infinite, and we can obtain moduli spaces of flip-graphs by considering the flip-graphs up to the action of the mapping class group. This always results in finite graphs and we are interested in their geometry. 

Our main focus is on the diameter growth of these graphs as $n$ increases. We obtain general estimates that hold for all topological types of filling surface. We find more precise estimates for certain families of filling surfaces and obtain asymptotic growth results for several of them. In particular, we find the exact diameter of modular flip-graphs when the filling surface is a cylinder with a single vertex on the non-privileged boundary curve. 

\section{Introduction}\label{sec:intro}

Triangulations of surfaces are very natural objects that appear in the study of topological, geometrical, algebraic, probabilistic and combinatorial aspects of surfaces and related topics. We are interested in a natural structure on spaces of triangulations -- flip-graphs. Vertices of flip-graphs are triangulations and two triangulations span an edge if they differ only by a single arc (our base surface is a topological object and we consider triangulations up to vertex preserving isotopy). When edge lengths are all set to one, flip-graphs are geometric objects which provide a measure for how different triangulations can be.

Flip-graphs appear in different contexts and take different forms. As flipping an arc (replacing an arc by another one) does not change either the vertices or the topology of the surface, flip-graphs correspond to triangulations of homeomorphic surfaces with a prescribed set of vertices. Provided the surface has enough topology, flip-graphs are infinite, and self homeomorphisms of the surface act on this graph as isomorphisms. In fact, modulo some exceptional cases, the {\it mapping class group} of the surface or the group of self-homeomorphisms of the surface up to isotopy is exactly the automorphism group of the graph \cite{KorkmazPapadopoulos2012}. The quotient of a flip-graph via its automorphism group is finite, and thus via the Svarc-Milnor Lemma (see for example \cite{BridsonHaefliger1999}), a flip-graph and the associated mapping class group are quasi-isometric. 

Furthermore, if one gives the triangles in a triangulation a given geometry, each triangulation corresponds to a geometric structure on a surface. In this direction, Brooks and Makover \cite{BrooksMakover2004} defined {\it random surfaces} to be geometric surfaces coming from a random triangulation where each triangle is an ideal hyperbolic triangle. This notion of a random surface is a way of sampling points in {\it Teichm\"uller and moduli spaces} - roughly speaking the space of hyperbolic metrics on a given topological structure. Although in the above it is only the vertex set of flip-graphs that appear, in the theory of {\it decorated} Teichm\"uller spaces, flip-graphs play an integral role \cite{Penner1987}. In a similar direction, Fomin, Shapiro, and Thurston \cite{FominShapiroThurston2008}, and more recently Fomin and Thurston \cite{FominThurston2012}, have used flip-graphs and variants to study cluster algebras that come from the Teichm\"uller theory of bordered surfaces. 

For all of these reasons, flip-graphs and their relatives appear frequently and importantly in the study of moduli spaces, surface topology and the study of mapping class groups. 

In a different context, flip-graphs are important objects for the study of triangulations of arbitrary dimension, whose vertices are placed in a Euclidean space and whose simplices are embedded linearly (see \cite{DeLoeraRambauSantos2010} and references therein). In this case, flip-graphs are always finite, and they are sometimes isomorphic to the graph of a polytope, or admit subgraphs that have this property. Such flip-graphs emerge for instance from the study of generalized hypergeometric functions and discriminants \cite{GelfandKapranovZelevinsky1990} and from the theory of cluster algebras \cite{FominZelevinsky2003}. The simplest non-trivial case is that of the flip-graph a polygon with $n$ vertices, which turns out to be the graph of a celebrated polytope - the associahedron \cite{Lee1989}. The study of this graph has an interesting history of its own \cite{Stasheff2012}, and one of the reasons it has attracted so much interest is that it pops up in surprisingly different contexts (see for instance \cite{Lee1989,SleatorTarjanThurston1988,Stasheff1963,Tamari1954}).

Associahedra appear, in particular, in the work of Sleator, Tarjan and Thurston on the dynamic optimality conjecture \cite{SleatorTarjanThurston1988}. They proved the theorem below about the diameter of these polytopes for sufficiently large $n$, using constructions of polyhedra in hyperbolic $3$-space \cite{SleatorTarjanThurston1988}. Their proof, however, does not tell how large $n$ should be for the theorem to hold. The second author proved this theorem whenever $n$ is greater than $12$ using combinatorial arguments \cite{Pournin2014}. Note that for smaller $n$ the diameter behaves differently. 

{\bf Theorem \cite{Pournin2014}.} {\it The flip-graph of a convex polygon with $n$ vertices has diameter $2n-10$ whenever $n$ is greater than $12$.}

This theorem is in some sense our starting point. The topology of a polygon is the simplest that one can imagine - it is simply the boundary circle filled by a disk. Our basic question is as follows - what happens when one replaces the disk by a surface with more topology? These {\it filling} surfaces (as they fill the boundary circle) generally give rise to infinite flips graphs. More precisely, unless the filling surface is a disk, a disk with a single marked point or a M\"obius band, the associated flip-graph is infinite. Up to homeomorphism which preserves the circle boundary pointwise however, we get a nice finite combinatorial moduli space of triangulations - so the question of bounding its diameter makes sense. 

Precise definitions and notations can be found in the next section - but in order to state our results we briefly describe our notation here. $\Sigma$ is the filling surface (so a topological surface with a privileged boundary curve) and $\Sigma_n$ is the same surface with $n$ marked points on the privileged boundary. The modular flip-graph $ \MFS$ is the flip-graph up to homeomorphism of $\Sigma_n$. For example $\MFS$ is the graph of the associahedron when $\Sigma$ is a disk.

Our first result is the following upper bound for the diameter which does not asymptotically depend on the topology of the filling surface. 

\begin{theorem}\label{thm:basicupperboundintro}
For any $\Sigma$ there exists a constant $K_{\Sigma}$ such that 
$$
\diam(\MFS)\leq 4 n + K_{\Sigma}.
$$
\end{theorem}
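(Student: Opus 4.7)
The plan is to identify a single ``normal-form'' triangulation $T_\star \in \MFS$ and to show that every triangulation lies within distance at most $2n + c_\Sigma$ of $T_\star$, where $c_\Sigma$ depends only on the topology of $\Sigma$. The triangle inequality then delivers $\diam(\MFS) \le 4n + K_\Sigma$ with $K_\Sigma = 2c_\Sigma$.

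To construct $T_\star$, fix once and for all a triangulation $T_0$ of $\Sigma_1$, the same filling surface carrying only a single marked point on the privileged boundary. The number of arcs of $T_0$ is a constant $N_\Sigma$ determined by the topology of $\Sigma$. I then build $T_\star$ from $T_0$ by introducing the additional $n-1$ marked points on the privileged boundary one at a time, each joined by a single arc to the unique vertex of $\Sigma_1$. Then $T_\star$ contains $T_0$ as a subcomplex, and the complement of $T_0$ in $T_\star$ is a fan triangulation of a polygon with $n + O(1)$ vertices.

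Reducing an arbitrary $T \in \MFS$ to $T_\star$ proceeds in two stages. In the \emph{skeleton extraction} stage, at most $c_1(\Sigma)$ flips, combined with the mapping class group action already built into $\MFS$, bring $T$ to a triangulation $T'$ containing the skeleton $T_0$ as a subcomplex. The intuition is that the arcs of $T_0$ have prescribed topological types, and any triangulation of $\Sigma_n$ must harbour enough topological complexity to expose arcs of these types after a bounded number of flips, after which the mapping class group can be used to align them with $T_0$. In the \emph{polygon flipping} stage, the complement of $T_0$ inside $T'$ is a polygon with $n + O(1)$ vertices carrying a restricted triangulation, which the Sleator-Tarjan-Thurston theorem quoted in the introduction allows us to flip to the canonical fan in at most $2(n + O(1)) - 10$ moves. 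Together, these two stages bound the distance from $T$ to $T_\star$ by $2n + c_\Sigma$.

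The main obstacle is the skeleton extraction stage. A priori, $T$ need not contain any arcs of the topological types appearing in $T_0$, and producing such arcs at a cost bounded independently of $n$ requires care. The quotient by the mapping class group is crucial here: it replaces the question of matching arcs up to isotopy with matching their combinatorial types, which is a finite and purely topological question depending only on $\Sigma$. A natural route is to induct on a topological complexity measure such as the genus plus the number of non-privileged boundary components of $\Sigma$, extracting one ``topology-capturing'' arc at a time and reducing to a strictly simpler filling surface, the base case being the polygon where the Sleator-Tarjan-Thurston bound applies directly.
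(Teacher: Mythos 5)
The gap is in the skeleton extraction stage, and it is genuine. Your argument needs the cost of installing the fixed skeleton $T_0$ to be bounded independently of $n$, and you propose to absorb the ambiguity in \emph{where} $T_0$ sits by invoking ``the mapping class group action already built into $\MFS$''. But $\mcg(\Sigma_n)$ fixes every marked point of the privileged boundary individually, so it cannot change the privileged boundary vertex at which the topology of $\Sigma$ attaches. Concretely, take $\Sigma=\Gamma$ (one boundary loop). Every triangulation $T$ of $\Gamma_n$ already contains a copy of the skeleton: the two non-boundary edges of the triangle incident to the boundary loop form a pair of arcs from $a_0$ to some privileged vertex $a_p$ cutting off a copy of $\Gamma_1$. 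However, $p$ depends on $T$, and no element of $\mcg(\Gamma_n)$, and no bounded number of flips, will move this attachment point to the vertex $a_1$ where your fixed $T_\star$ carries its skeleton: relocating it costs on the order of $n$ flips (this is exactly the $n/2$ term in the proof of Theorem~\ref{thm:gammaupper}), and two triangulations whose skeletons sit at antipodal vertices cannot both reach a single fixed $T_\star$'s skeleton position for a constant cost. So the claimed bound $d(T,T_\star)\leq 2n+c_\Sigma$ is unsupported; once the relocation cost is accounted for, the per-triangulation estimate degrades to something like $(2+c)n$ and the triangle inequality no longer yields $4n+K_\Sigma$. (A secondary worry, for surfaces with more topology, is that even producing an arc of the right topological type after quotienting by $\mcg(\Sigma_n)$ in $O(1)$ flips is asserted rather than proved; but the attachment-point issue already breaks the argument for the simplest non-trivial $\Sigma$.)

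For contrast, the paper's proof deliberately avoids any normal form. Given the \emph{pair} $(U,V)$, an Euler characteristic and averaging argument produces a privileged boundary vertex $a$ whose interior degrees in $U$ and in $V$ sum to at most $4$; Lemma~\ref{Hlemma.3.1} then shows that at most $4$ flips in total give both triangulations an ear at $a$, after which that vertex is deleted and the argument is iterated, yielding $4$ flips per boundary vertex. Choosing the vertex $a$ adaptively as a function of both endpoints is precisely what sidesteps the relocation problem your construction runs into. If you want to rescue a normal-form strategy, you would have to let $T_\star$ depend on $U$ and $V$ (for instance, placing the skeleton at a vertex that is cheap for both), at which point you are essentially reconstructing the paper's argument.
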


A simple consequence of this result and of the monotonicity of $\diam(\MFS)$, proven in Section \ref{Sec.projlemma}, is that the limit
$$
\lim_{n\to \infty} \frac{\diam(\MFS)}{n} 
$$
exists (and is less than or equal to $4$). Again, in the case of the associahedron, this limit is $2$. It is perhaps not a priori obvious why the limit should not {\it always} be $2$, independently of the topology of $\Sigma$, but this turns out not to be the case. 

In order to exhibit different behaviors, we study particular examples of $\Sigma$. Our examples are surfaces $\Sigma$ with genus $0$ and $k+1$ boundaries, including the privileged one, and each of the non-privileged boundaries has a single vertex. In the sequel we will refer to these non-privileged boundary curves with a single vertex as {\it boundary loops}. They can be marked or unmarked - this corresponds to disallowing or allowing the mapping class group acting on the flip-graph to exchange the loops. We provide the following upper bounds for the diameters.
\begin{theorem}\label{thm:uppermarkedintro}
Let $\Sigma$ be a filling surface with genus $0$ and $k$ marked boundary loops. Then there exists a constant $K_k$ which only depends on $k$ such that
$$
\diam(\MFS) \leq \left(4-\frac{2}{k}\right) \, n + K_k.
$$
\end{theorem}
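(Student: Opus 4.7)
My strategy is to show that every $T\in\MFS$ lies within $(2-1/k)n+O_k(1)$ flips of a fixed reference triangulation $T_\star$; two applications of the triangle inequality then yield the theorem with $K_k$ absorbing the constants.

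\textbf{Setting up $T_\star$.} I would fix $k$ pairwise disjoint spokes $\sigma_1,\ldots,\sigma_k$, where $\sigma_i$ joins the unique vertex $w_i$ on the $i$-th boundary loop to a designated vertex $v_{m_i}$ of the privileged boundary, with the indices $m_1,\ldots,m_k$ chosen roughly equally spaced. A short Euler-characteristic computation shows that cutting $\Sigma_n$ along the $\sigma_i$ produces a single polygon with $n+3k$ vertices; together with a canonical fan triangulation of this polygon, the spokes define $T_\star$.

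\textbf{Bounding $d(T,T_\star)$.} I would reach $T_\star$ from any $T$ in two phases. In \emph{Phase A}, I install the canonical spokes: since the mapping class group of $\Sigma_n$ contains $k$ independent Dehn twists around the core curves of the boundary loops, I first normalize the representative of $T$ so that each spoke of $T$ incident to a loop vertex has minimal winding, which is free in $\MFS$. I then install the $\sigma_i$ one at a time by flipping, at a total cost bounded by their cumulative crossing number with the normalized $T$. The key claim is that this total cost is at most $(1-1/k)n+O_k(1)$; the saving of $n/k$ compared to a naive per-spoke bound is extracted from the fact that the $k$ canonical outer endpoints partition the privileged boundary into arcs of average length $n/k$, so that one installation can be performed for free by choosing which canonical spoke to match first, and each subsequent installation is absorbed within one outer-boundary arc. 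In \emph{Phase B}, once the $\sigma_i$ are in place, cutting along them turns $T$ into a triangulation of an $(n+3k)$-gon, and the standard fact that any triangulation of an $m$-gon lies within $m-3$ flips of a prescribed fan costs at most $n+3k-3$ additional flips. Summing the two phases gives $d(T,T_\star)\leq(2-1/k)n+O_k(1)$, as required.

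\textbf{Main obstacle.} The difficulty is concentrated in Phase A: the naive per-spoke bound of $n$ flips gives $kn$ total, which is too weak by a factor of $k$, so extracting the $n/k$ saving is the crux. I would expect the proof to rely on a delicate bookkeeping of how the $k$ canonical spokes partition the privileged boundary, together with the freedom of choosing which spoke is installed first, to make the MCG normalization pay off uniformly across the $k$ loops. A complementary subtlety is verifying that the successive installations do not destroy the alignment of previously installed spokes, which should follow from the pairwise disjointness of the $\sigma_i$ and a greedy flipping order.
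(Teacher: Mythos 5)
There is a genuine gap in Phase A, and it is fatal. The cost of installing a fixed arc $\sigma_i$ into an arbitrary triangulation $T$ is governed by the (minimal over mapping class representatives) number of crossings of $\sigma_i$ with $T$, and this can be of order $n$ for a single spoke: take $T$ to be a zigzag-type triangulation (as the triangulations $A_n^{-}$ and $B_n^{\pm}$ of Sections 4 and 5), in which the arcs of $T$ separate the loop vertex $w_i$ from its target $v_{m_i}$ by roughly $n/2$ layers; normalizing by Dehn twists does not help, since twisting only changes winding around the loops and not the separation created by the zigzag. Consequently neither the claim that ``one installation can be performed for free'' nor the claim that ``each subsequent installation is absorbed within one outer-boundary arc'' holds, and the asserted Phase A bound of $(1-1/k)n+O_k(1)$ is unsubstantiated. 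A decisive sanity check: your argument nowhere uses $k\geq 2$, and for $k=1$ it outputs $\diam(\MFC)\leq 2n+O(1)$ (Phase A free, Phase B at most $n$ flips), contradicting Theorem \ref{thm:gammaintro}, which gives $\diam(\MFC)=\lfloor 5n/2\rfloor-2$. Put differently, your strategy bounds the eccentricity of a fixed center $T_\star$ by $(2-1/k)n+O_k(1)$, but already for $k=1$ every vertex of $\MFC$ has eccentricity at least $5n/4-1$, so no choice of $T_\star$ can make the plan work; for $k=2$ the plan would require the radius bound to be exactly sharp, which your sketch comes nowhere near establishing.

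The paper's proof avoids both problems. It does not fix a target triangulation in advance; instead, for a given pair $U,V$ it first makes every interior arc of each triangulation incident to a single loop vertex $a_0$ by a monotone degree-increasing process (at most $n+O(k)$ flips per triangulation, with no crossing-number estimates required), and only then extracts the $\frac{2}{k}n$ saving: the remaining loops sit in at most $k$ ``pods'' attached to $a_0$, the pods of $U$ and of $V$ each leave a gap of length at least $n/k$ along the privileged boundary, and the pods are slid to a common vertex while avoiding both gaps. The saving is therefore realized in the pod-moving step and depends on the pair $(U,V)$ jointly, which is precisely what a fixed-center, triangle-inequality argument cannot capture.
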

Similarly:
\begin{theorem}\label{thm:upperunmarkedintro}
Let $\Sigma$ be a filling surface with genus $0$ and $k$ unmarked boundary loops. Then there exists a constant $K_k$ which only depends on $k$ such that
$$
\diam(\MFS) \leq \left(3-\frac{1}{2k}\right) \, n + K_k.
$$
\end{theorem}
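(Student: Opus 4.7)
The strategy is to refine the proof of Theorem \ref{thm:uppermarkedintro} by exploiting the additional symmetry provided by the mapping class group, which in the unmarked setting is free to permute the $k$ boundary loops. I would fix a reference triangulation $T_\star$ in $\MFS$ with a canonical shape: the $k$ loops are clustered at $k$ designated consecutive vertices $v_1,\ldots,v_k$ of the privileged boundary, each enclosed in a small loop triangle, and the remainder of $\Sigma_n$ is triangulated as a fan from a distinguished vertex $v_0$. Since $\diam(\MFS) \leq 2\,\mathrm{ecc}(T_\star)$, it suffices to prove that $d(T,T_\star) \leq \tfrac{1}{2}(3-\tfrac{1}{2k})\, n + K_k'$ for every $T \in \MFS$.

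Given $T$, the plan is to flip it to $T_\star$ in two coordinated stages: first relocate each loop of $T$ to one of the targets $v_j$, and then rearrange the remaining interior arcs into the fan from $v_0$. The second stage is controlled by the same arc-counting argument as in Theorem \ref{thm:uppermarkedintro}. The saving over the marked case arises entirely in the first stage: the mapping class group now allows us to send the loops of $T$ to the targets via any bijection $\pi \in S_k$, and we are free to choose the pairing that minimizes the total transportation cost along the cyclic boundary. Implementing each unit of transportation as a bounded number of flips, the first stage is thus bounded above by a constant multiple of $\min_{\pi} \sum_i d_{\mathrm{cyc}}(q_i, v_{\pi(i)})$, where $q_1,\ldots,q_k$ denote the boundary vertices to which the loops of $T$ are anchored and $d_{\mathrm{cyc}}$ is cyclic distance.

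The key combinatorial step is then a worst-case bound on this minimum matching cost. A natural route is an averaging argument over cyclic shifts of the target block $(v_1,\ldots,v_k)$: since the $n$ rotations partition the boundary into $k$ average arcs of length $n/k$, sweeping over all rotations and averaging the identity-pairing cost shows that at least one rotation achieves a cost improvement on the order of $n/(2k)$ compared to the identity. In the unmarked setting, each such rotation corresponds to a valid permutation of the loop labels and is therefore admissible, whereas in the marked setting one is forced into the unrotated pairing. This asymmetry is precisely the source of the coefficient $1/(2k)$ in the stated bound.

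The main obstacle will be making the ``move a loop one step along the boundary'' primitive concrete as a bounded-length sequence of explicit flips that genuinely transports one loop without disturbing the anchors of the other loops in an uncontrolled way. This will likely require treating the loops in a carefully chosen order, for instance by cyclic sweep, together with a bookkeeping argument ensuring that each unit of transportation costs only $O(1)$ flips independently of $n$ and $k$. Once this primitive and the combinatorial matching lemma are in place, the fan-arrangement count from Theorem \ref{thm:uppermarkedintro} closes the argument and yields the announced diameter bound.
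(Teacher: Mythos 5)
Your proposal has genuine gaps, and the two central devices you rely on both fail. First, the reduction $\diam(\MFS)\leq 2\,\mathrm{ecc}(T_\star)$ for a \emph{fixed} reference triangulation forfeits exactly the adaptivity that produces the $\frac{1}{2k}$ saving. In the paper's argument the saving comes from choosing the ``center'' vertex $a_0$ and a condemned gap of size at least $\frac{n}{2k}$ \emph{after} looking at both endpoint triangulations: the at most $2k$ privileged boundary vertices carrying loops in either $U'$ or $V'$ leave a gap that the pods never have to cross. With a fixed $T_\star$ you would need $\mathrm{ecc}(T_\star)\leq \frac{3}{2}n-\frac{n}{4k}+K'_k$; already for $k=1$ this asserts that $T_\star$ is an asymptotically exact center of a graph of diameter $\frac{5}{2}n-2$, and nothing in your outline supplies the missing $\frac{n}{4k}$, because your proposed source of it --- averaging over cyclic shifts of the target block --- is inadmissible. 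Rotating the block $(v_1,\dots,v_k)$ along the privileged boundary is \emph{not} an element of $\mcg(\Sigma_n)$: the privileged boundary vertices are marked and fixed individually, so each rotation is a genuinely different target triangulation, not a relabelling of the loops. The unmarkedness of the loops only lets you permute which loop lands at which (fixed) target vertex; it does not let you move the target vertices.

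Second, your cost model for the transportation stage is wrong. Bounding that stage by a multiple of $\min_\pi\sum_i d_{\mathrm{cyc}}(q_i,v_{\pi(i)})$ gives $\Theta(kn)$ in the worst case (all $k$ loops anchored antipodally to the target block, where every matching costs about $kn/2$), which would yield a coefficient growing with $k$ rather than tending to $3$. The correct accounting, as in the paper, merges pods as they sweep: a pod carrying several loops still moves one step per flip, so the whole relocation of all loops costs at most one circumference minus the condemned gap, i.e.\ $n-\frac{n}{2k}$ flips in total, plus $O(k)$ flips for joining and splitting pods. This is also where unmarkedness genuinely enters: pods of $U''$ and $V''$ sitting at the same vertex need only agree in \emph{multiplicity}, since a homeomorphism can permute the loops inside the peas; with marked loops this matching by multiplicity is unavailable. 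To repair your argument you would need to abandon the fixed-reference reduction, compare two arbitrary triangulations directly, and replace the matching/averaging lemma by the largest-gap-over-$2k$-anchors count together with the pod-merging sweep.
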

In the case of the associahedron, upper bounds of the correct order ($2n$) are somewhat immediate, but here the upper bounds, although not particularly mysterious, are somewhat more involved.

Of course if there is only one loop, being marked or unmarked doesn't matter and we denote this surface $\Gamma$. In the case of $\Gamma$, we are able to prove a precise result about the diameter of the flip-graph, which in particular shows that the upper bound in the previous theorem is asymptotically sharp for $k=1$ at least.
\begin{theorem}\label{thm:gammaintro}
The diameters of the modular flip-graphs of $\Gamma$ satisfy
$$
\diam \left( \MFC \right) =\left\lfloor{ \frac{5}{2} n }\right\rfloor-2.
$$
\end{theorem}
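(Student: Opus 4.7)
The plan is to prove both inequalities separately. The upper bound $\diam(\MFC)\le\lfloor 5n/2\rfloor-2$ should be obtainable by sharpening the argument behind Theorem~\ref{thm:upperunmarkedintro} in the special case $k=1$, which already yields an asymptotic bound of $\frac{5}{2}n+K_1$. Concretely, I would fix a canonical reference triangulation $T_0$ of $\Gamma_n$ -- for instance a fan based at the unique non-privileged boundary vertex, together with a single arc enclosing the loop -- and then exhibit an explicit flip sequence from an arbitrary triangulation $T$ to $T_0$. Since the mapping class group of $\Gamma$ is generated by the Dehn twist around the core curve, one may first use the quotient to normalize the "winding" of $T$ for free; once winding is normalized, the remaining arcs sit in an essentially polygonal region, and Pournin's sharp bound on the diameter of the associahedron applies. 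Tightening the bookkeeping with integrality in mind should produce the floor expression.

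For the lower bound, the goal is to pin down two triangulations $T^-, T^+$ of $\Gamma_n$ realizing the claimed distance. Natural candidates are a fan-type triangulation anchored at the loop vertex and a "maximally winding" triangulation whose arcs collectively cross a fixed reference arc many times. To certify the distance, I would look for an integer-valued monovariant $\Phi$ on $\MFC$ that changes by at most $1$ under each flip and differs on $T^-$ and $T^+$ by $\lfloor 5n/2\rfloor-2$. The most natural candidates for $\Phi$ are (weighted) geometric intersection numbers with one or more test arcs in $\Gamma_n$, calibrated precisely so that no flip can reduce $\Phi$ by more than one unit, combined perhaps with a term penalising arcs incident to the loop vertex.

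The main obstacle is securing the exact floor constant in the lower bound. Obtaining a lower bound of order $\frac{5}{2}n$ via intersection arguments should be accessible in the spirit of the classical associahedron proof, but pinning down the additive term $-2$ requires a monovariant that is sharp on every flip, with no slack. I would expect to calibrate the constant by computing $\diam(\MFC)$ by hand for a few small values of $n$ and then argue the general case by the monovariant, possibly combined with an inductive descent in $n$ that exploits the monotonicity of $\diam(\MFC)$ mentioned after Theorem~\ref{thm:basicupperboundintro}.
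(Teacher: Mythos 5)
There are genuine gaps on both sides of the equality, so the proposal does not yet constitute a proof.

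For the upper bound, your plan of routing every triangulation through a single canonical fan $T_0$ loses the sharp constant. The paper's argument gets $\tfrac{5}{2}n-2$ by flipping $U$ and $V$ \emph{separately} into fans based at the loop vertex $a_0$ (at most $n-1$ flips each, since a triangulation of $\Gamma_n$ has $n+1$ interior arcs of which at least two are already incident to $a_0$), and then observing that the two resulting fans differ only in the position of the apex of the triangle incident to the boundary loop, which can be rotated into place in at most $n/2$ further flips. If instead both triangulations must reach one fixed $T_0$, each may need up to $(n-1)+n/2$ flips, giving $3n-2$ in total; the saving of $n/2$ comes precisely from not fixing the target in advance. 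Your proposed alternative -- quotienting by the Dehn twist to ``normalize winding'' and then invoking the associahedron bound -- is not carried out, and it is not clear that cutting $\Gamma_n$ along a reference arc reduces the problem to a polygon in a way that yields the constant $\tfrac52$ rather than something weaker.

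For the lower bound, the monovariant you hope for is exactly the missing idea, and there is good reason to doubt it exists in the form described: geometric intersection numbers with a fixed test arc are not $1$-Lipschitz under flips (the two diagonals of the flipped quadrilateral can have very different intersection numbers with a test arc), and a functional that is sharp on \emph{every} flip of \emph{every} geodesic with zero slack is precisely what nobody has been able to produce even for the associahedron. The paper's proof is instead an induction via vertex deletion: it exhibits two explicit triangulations $A_n^-$ and $A_n^+$ (zigzag triangulations of the disc pierced at antipodal ears), and proves
$$
d(A_n^-,A_n^+)\geq\min\bigl(\{d(A_{n-1}^-,A_{n-1}^+)+3,\;d(A_{n-2}^-,A_{n-2}^+)+5\}\bigr)
$$
by a case analysis on which arc is introduced by the first flip incident to $\alpha_n$ along a geodesic, counting flips incident to chosen boundary arcs and applying Theorem \ref{Atheorem.2.3} once or twice; the base cases $d=0$ for $n=1$ and $d=3$ for $n=2$ then give $\lfloor 5n/2\rfloor-2$. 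Your closing remark about ``inductive descent in $n$'' points toward this machinery, but none of the required case analysis (which arcs are incident to at least $3$, respectively $2$, flips, and why) is present, and that case analysis is the substance of the proof.
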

As for the associahedron, the hard part is the lower bound. We note in the final section that the lower bound from this theorem proves general lower bound provided $\Sigma$ has at least one interior marked point and enough topology. Our final main result is when $\Sigma$ has exactly two marked boundary loops - we call this particular surface $\Pi$. We prove the following.
\begin{theorem}\label{thm:piintro}
The diameter of $\mathcal{MF}(\Pi_n)$ is not less than $3n$.
\end{theorem}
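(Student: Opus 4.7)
The plan is to exhibit, for each $n$, a pair of triangulations $T$ and $T'$ of $\Pi_n$ whose images in $\mathcal{MF}(\Pi_n)$ lie at flip-distance at least $3n$. The standard device for producing such lower bounds is a real-valued function $\phi$ on triangulations that (i) is invariant under the mapping class group $\mathrm{Mod}(\Pi_n)$, so it descends to $\mathcal{MF}(\Pi_n)$, (ii) is Lipschitz under flips, with small constant $c$, and (iii) satisfies $\phi(T) - \phi(T') \geq 3cn$. For $T'$ I would take a fan-like triangulation of $\Pi_n$; for $T$, a triangulation whose interior arcs wind repeatedly around both marked loops in a coordinated way, so that the total winding is of order $n$.

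The natural candidate for $\phi$ is an intersection number with a carefully chosen reference arc or weighted multi-arc $\Lambda$ on $\Pi$, minimized over its $\mathrm{Mod}(\Pi_n)$-orbit:
\[
\phi(U) := \min_{g \in \mathrm{Mod}(\Pi_n)} i(g \cdot \Lambda,\, U),
\qquad i(\alpha,U) := \sum_{a \in U} i(\alpha,a).
\]
This is automatically $\mathrm{Mod}(\Pi_n)$-invariant. The Lipschitz constant of $i(\alpha,\cdot)$ under a flip is controlled by the maximum number of transverse strands of $\alpha$ through the flip quadrilateral, so with $\Lambda$ chosen simple enough $\phi$ is $1$-Lipschitz. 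Arranging $\Lambda$ so that (a representative of its orbit) appears among the arcs of $T'$ secures $\phi(T') = 0$, and the problem reduces to establishing the uniform lower bound $\phi(T) \geq 3n$, i.e., that no $\mathrm{Mod}(\Pi_n)$-image of $\Lambda$ has fewer than $3n$ intersections with the interior arcs of $T$.

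This uniform lower bound is the main obstacle and the geometric heart of the argument. The essential feature of $\Pi$ is that $\mathrm{Mod}(\Pi_n)$ fixing the privileged boundary pointwise is generated by Dehn twists about the two marked loops, which cannot be exchanged precisely because they are marked. I would therefore choose $\Lambda$ to consist of arcs that cross the region between the two loops, and design the $n+5$ interior arcs of $T$ to spiral through that region in a coupled fashion: twisting $\Lambda$ around one loop reduces its intersections with arcs of $T$ that wind about that loop only at the cost of increasing intersections with arcs winding about the other. The cleanest way to carry out this bookkeeping is to pass to an appropriate cover of $\Pi$ in which the two Dehn twists become translations, so that the minimization over $\mathrm{Mod}(\Pi_n)$ reduces to minimizing an explicit integer-valued function of two twist parameters; the crucial coupling can then be read off directly. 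Once $\phi(T) \geq 3n$ is proven, the Lipschitz property of $\phi$ together with $\phi(T') = 0$ yield $d_{\mathcal{MF}(\Pi_n)}(T,T') \geq 3n$, which matches asymptotically the upper bound $3n + K_2$ supplied by Theorem~\ref{thm:uppermarkedintro} at $k=2$ and is the natural consistency check on the construction.
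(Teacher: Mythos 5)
Your proposal is a program rather than a proof, and the two places where it would have to do real work are exactly the places it leaves open.

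First, the Lipschitz claim. A flip replaces one diagonal $\varepsilon$ of a quadrilateral by the other diagonal $\varepsilon'$, so your $\phi$ changes by $|i(\Lambda,\varepsilon')-i(\Lambda,\varepsilon)|$. Strands of $\Lambda$ crossing the quadrilateral between opposite sides meet both diagonals and contribute nothing to this difference, but strands cutting off a corner meet exactly one diagonal, and there is no a priori bound on how many such strands there are: the Lipschitz constant of $i(\Lambda,\cdot)$ is not $1$ unless $\Lambda$ is very simple. This creates a genuine tension rather than a technicality: the variant that \emph{is} automatically $1$-Lipschitz (the number of arcs of $U$ meeting $\Lambda$) is bounded by the number of interior arcs of a triangulation of $\Pi_n$, which is $n+O(1)$, so it can never certify a lower bound of $3n$. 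To exceed the arc count you must use multiplicities of intersection, and then you must separately prove a uniform bound on the flip-increment of $\phi$; you have not indicated how. Second, the estimate $\phi(T)\geq 3n$ over the entire $\mathrm{Mod}(\Pi_n)$-orbit is, as you say yourself, ``the main obstacle and the geometric heart of the argument,'' and it is not carried out. Note also that $\mathrm{Mod}(\Pi_n)$ is not generated by the two loop twists alone; at minimum the twist about a curve parallel to the privileged boundary is a further nontrivial generator, so your minimization has more parameters than your covering-space picture accounts for.

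For contrast, the paper proves this bound by an entirely combinatorial induction in the style of Pournin's associahedron argument: it exhibits explicit triangulations $B_n^-$ and $B_n^+$, and establishes the recurrence $d(B_n^-,B_n^+)\geq\min(d(B_{n-1}^-,B_{n-1}^+)+3,\,d(B_{n-2}^-,B_{n-2}^+)+6)$ by counting, along an arbitrary geodesic, flips incident to well-chosen boundary arcs and then deleting the corresponding vertices (Theorem \ref{Atheorem.2.3}). The role you assign to the coupling of the two twist parameters is played there by a single local observation (Fig. \ref{Afigure.5.3}): two triangles incident to the same boundary arc that separate the two marked loops in opposite ways cannot be exchanged by one flip, which forces at least two flips incident to that arc. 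If you want to salvage your approach, that is the phenomenon your function $\phi$ would need to detect quantitatively; as written, your argument does not yet do so.
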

This result and the upper bound from Theorem \ref{thm:uppermarkedintro} when $k=2$ show that the diameters grow like $3n$ (with constant error term).

Our lower bounds always come from somewhat involved combinatorial arguments, using the methods introduced in \cite{Pournin2014}. Boundary loops play an important part as to ensure that two triangulations are far apart, we show that moving these loops necessarily entails a certain number flips. 

This article is organized as follows. We begin with a section devoted to preliminaries which include notation and basic or previous results we need in the sequel. As the results may be of interest to people with different mathematical backgrounds, we spend some time talking about the setup in order to keep it as self contained as possible. The third section is about upper bounds and the fourth and fifth about lower bounds. In the final section, we discuss some consequences of our results and we conclude with several questions and conjectures about what the more general picture might look like.

\section{Preliminaries}

In this section we describe in some detail the objects we are interested in, introduce notations and some of the tools we use in the sequel. In particular, the methods used in \cite{Pournin2014} to obtain lower bounds on the diameter of flip-graphs are generalized in Subsection \ref{Asubsection.2.2}.

\subsection{Basic setup}
Our basic setup is as follows. We begin with a topological orientable surface $\Sigma$ with the following properties. 

{\underline{Property 1}}: $\Sigma$ has at least one boundary curve, and we think of this boundary curve as being special. We will refer to this special boundary curve as being the {\it privileged boundary curve}. (It has no marked or unmarked points on it, but will be endowed with them in what follows.)

{\underline{Property 2}}: All non privileged boundary curves of $\Sigma$ have at least one marked or unmarked point on it. This is because we will want to triangulate $\Sigma$ and these points will be an integral part of the vertices of the triangulation. The distinction between marked and unmarked points will become clearer in the following, but note that if a boundary curve contains one marked point, all points on the boundary are naturally marked as they are determined by their relative position to the marked point on the boundary. Also note that most of the specific examples we study in more detail have only marked points. 

{\underline{Property 3}}: $\Sigma$ is of finite type. It can have genus, marked or unmarked points in its interior or on its non-privileged boundary curves, but only a finite number of each. Another way of saying this is by asking that its group of self homeomorphisms be finitely generated (but not necessarily finite). 

We illustrate $\Sigma$ in Fig. \ref{Hfigure.surface} with its different possible features. 
\begin{figure}
\begin{centering}
\includegraphics{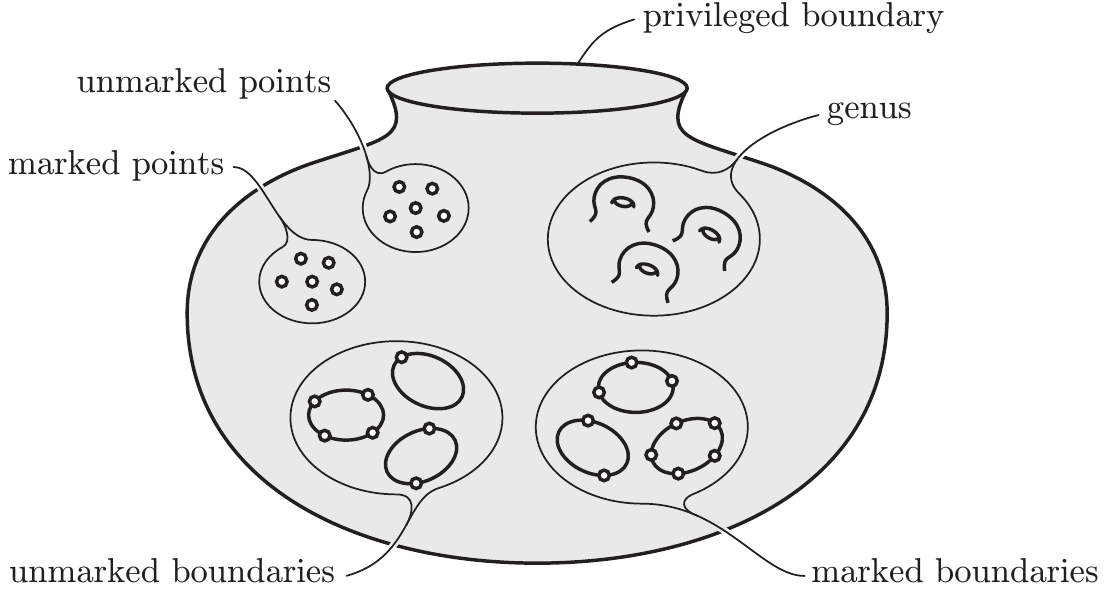}
\caption{$\Sigma$ and its possible features}
\label{Hfigure.surface}
\end{centering}
\end{figure}
Note that, if it has no topology, then $\Sigma$ is simply a disk. 

For any positive integer $n$, from $\Sigma$ we obtain a surface $\Sigma_n$ by placing $n$ marked points on the privileged boundary of $\Sigma$. We are interested in triangulating $\Sigma_n$ and studying the geometry of the resulting flip-graphs.
We fix $\Sigma_n$, and we refer to its set of marked and unmarked points as its {\it vertices}. An {\it arc} of $\Sigma_n$ is an isotopy class of non-oriented simple paths between two vertices (non-necessarily distinct). 

From arcs one can construct a simplicial complex called the arc complex. This complex is well studied in geometric topology; it is built by associating simplices to sets of arcs that can be realized disjointly. A {\it triangulation} of $\Sigma$ is a maximal collection of arcs that can be realized disjointly. Although they are not necessarily ``proper" triangulations in the usual sense, they do cut the surface into a collection of triangles. 

For fixed $\Sigma_n$, the number of interior arcs of a triangulation is a fixed number and we call this number the {\it arc complexity} of $\Sigma_n$. Note that by an Euler characteristic argument it increases linearly in $n$. 

We now construct the {\it flip-graph} $\FS$ as follows. Vertices of $\FS$ are the triangulations of $\Sigma_n$, and two vertices share an edge if they coincide in all but one arc. Another way of seeing this is that they share an edge if they are related by a {\it flip}, as shown on Fig. \ref{fig:flip}.
\begin{figure}
\begin{centering}
\includegraphics{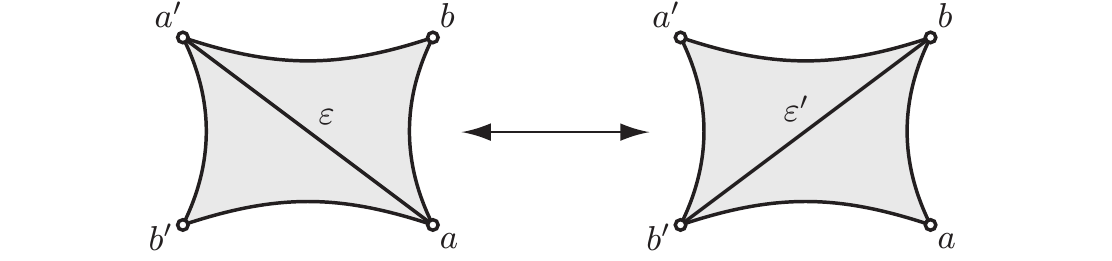}
\caption{The flip that exchanges arcs $\varepsilon$ and $\varepsilon'$}
\label{fig:flip}
\end{centering}
\end{figure}
This graph is sometimes finite, sometimes infinite, but it is always connected and any isotopy class of arc can be introduced into a triangulation by a finite number of flips (see for instance \cite{Mosher1988}).

When $\Sigma$ is a disk, $\FS$ is finite and is the graph of the associahedron. A simple example of an infinite flip-graph is when $\Sigma$ is a surface with one boundary loop (in addition to the privileged boundary) and no other topology. We denote this surface $\Gamma$ for future reference. The simplest case is $\Gamma_1$ and a triangulation of $\Gamma_1$ always contains two interior arcs, both between distinct marked points. Each triangulation can be flipped in two ways, so $\FCO$ is everywhere of degree 2. Furthermore it is straightforward to see that it is infinite and connected so in fact is isomorphic to the infinite line graph ($\Z$ with its obvious graph structure). 

In the event that $\FS$ is infinite, there is a non-trivial natural action of the group of self-homeomorphisms of $\Sigma_n$ on $\FS$. This is because homeomorphisms will preserve the property of two triangulations being related by a flip so they induce a simplicial action on $\FS$. This is where the importance of being a marked or an unmarked point plays a part. We allow homeomorphisms to exchange unmarked points (but fix them globally as a set). In contrast they must fix all marked points individually. We denote $\mcg(\Sigma_n)$ the group of such homeomorphisms up to isotopy. Note that once $n\geq 3$, by the action on the privileged boundary of $\Sigma$, all such homeomorphisms are orientation preserving. As we are interested primarily in large $n$, we don't need to worry about orientation reversing homeomorphisms. 

The combinatorial moduli spaces we are interested in are thus 
$$
\MFS := \FS / \mcg (\Sigma_n).
$$

Observe that this always gives rise to connected finite graphs. To unify notation, we also denote the corresponding flip-graph by $\MFS$ even if homeomorphism group action is trivial.

We think of these graphs as discrete metric spaces where points are vertices of the graphs and the distance is the usual graph distance with edge length $1$. In particular, some of these graphs have loops (a single edge from a vertex to itself) but adding or removing a loop gives rise to an identical metric space. We think of these graphs as not having any loops.

Our main focus is on the (vertex) diameter of these graphs, which we denote $\diam(\MFS)$ and how these grow in function of $n$ for fixed $\Sigma$. In order to exhibit maximally distant triangulations, we will spend some time studying particular topological types of $\Sigma$. One of them is $\Gamma$, already described above. It has one boundary loop (recall that a boundary loop refers to a non-privileged boundary with a single vertex). Similarly we shall consider $\Pi$ which has exactly two boundary loops and no other topology.

\subsection{Deleting a vertex of the privileged boundary}
\label{Asubsection.2.2}

One of the main ingredients used in \cite{Pournin2014} to obtain lower bounds on flip distances is the operation of deleting a vertex from a triangulation. Here, we will use this operation to the same end. Vertices of the privileged boundary will be deleted from triangulations of a given surface $\Sigma_n$, resulting in triangulations of $\Sigma_{n-1}$ when $n$ is greater than $1$.

For a surface $\Sigma_n$ we label the vertices on the privileged boundary $a_1$ to $a_n$ in such a way that two vertices with consecutive indices are also consecutive on the boundary. Furthermore, the boundary arc with vertices $a_p$ and $a_{p+1}$ will be denoted $\alpha_p$, and the boundary arc with vertices $a_n$ and $a_1$ by $\alpha_n$.

Now consider a triangulation $T$ of $\Sigma_n$. Some triangle $t$ of $T$, depicted on the left of Fig. \ref{Afigure.2.0}, is incident to arc $\alpha_p$.
\begin{figure}
\begin{centering}
\includegraphics{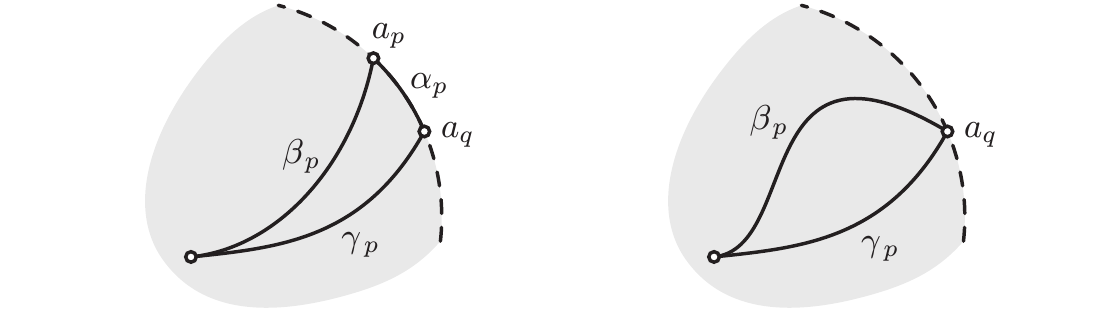}
\caption{The triangle incident to arc $\alpha_p$ in some triangulation of $\Sigma_n$ (left), and what happens to it when vertex $a_p$ is displaced to the other vertex of $\alpha_p$ along the boundary (right).}\label{Afigure.2.0}
\end{centering}
\end{figure}
Assuming that $n$ is greater than $1$, this triangle necessarily has two other distinct edges. Denote these edges by $\beta_p$ and $\gamma_p$ as shown on the figure. Deleting vertex $a_p$ consists in displacing this vertex along the boundary to the other vertex of $\alpha_p$, and by removing arc $\beta_p$ from the resulting set of arcs. Observe in particular that the displacement of vertex $a_p$ removes $a_p$ from the privileged boundary and arc $\alpha_p$ from the triangulation as shown on the right of Fig. \ref{Afigure.2.0}. Moreover, arcs $\beta_p$ and $\gamma_p$ have then become isotopic, and the removal of arc $\beta_p$ results in a triangulation of $\Sigma_{n-1}$.

Note that the deletion operation preserves triangulation homeomorphy. Therefore, this operation carries over to the moduli of flip-graphs and transforms any triangulation in $\MF(\Sigma_n)$ into a triangulation that belongs to $\MF(\Sigma_{n-1})$. The triangulation obtained by deleting vertex $a_p$ from $T$ is called $T\contract{p}$ in the remainder of the paper, following the notation introduced in \cite{Pournin2014}. This notation will be used indifferently whether $T$ is a triangulation of $\Sigma_n$ or belongs to $\MF(\Sigma_n)$.

Consider two triangulations $U$ and $V$ in $\MF(\Sigma_n)$ and assume that they can be obtained from one another by a flip. The following proposition shows that the relation between $U\contract{p}$ and $V\contract{p}$ can be of two kinds.
\begin{proposition}\label{Aproposition.2.0}
Suppose $n\geq 2$. If $U$ and $V$ are triangulations in $\MF(\Sigma_n)$ related by a flip, then $U\contract{p}$ and $V\contract{p}$ are either identical or they are related by a flip.
\end{proposition}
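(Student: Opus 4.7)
The plan is to case-split according to whether the flipped arc is an edge of the triangle of $U$ incident to $\alpha_p$. Lift $U$ and $V$ to representative triangulations in $\FS$ related by a flip, and let $\varepsilon$ denote the interior arc of $U$ that is replaced by the flip and $\varepsilon'$ its replacement in $V$. Write $t$ for the triangle of $U$ incident to $\alpha_p$, with edges $\alpha_p$, $\beta_p^U$, $\gamma_p^U$ and third vertex $v_U$. Since $\alpha_p$ is a boundary arc, it cannot be flipped, so $\varepsilon$ is an interior arc distinct from $\alpha_p$.

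Case 1: $\varepsilon \notin \{\beta_p^U, \gamma_p^U\}$. Then the two triangles supporting the flip are distinct from $t$, so $t$ is unchanged, and therefore is also the triangle of $V$ incident to $\alpha_p$, with $\beta_p^V = \beta_p^U$ and $\gamma_p^V = \gamma_p^U$. Because the deletion of $a_p$ modifies only the triangle $t$ (and relabels $a_p$ to $a_{p+1}$ on arcs incident to $a_p$), the deletion and the flip act on disjoint parts of the combinatorial data and therefore commute. Performing the deletion on both $U$ and $V$ yields triangulations of $\Sigma_{n-1}$ that still differ by the flip exchanging $\varepsilon$ and $\varepsilon'$, after relabeling $a_p$ to $a_{p+1}$ on whichever endpoint of $\varepsilon$ or $\varepsilon'$ equals $a_p$ (the other three vertices of the flipped quadrilateral are distinct from $a_p$, since $\varepsilon$ is not an edge of $t$).

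Case 2: $\varepsilon \in \{\beta_p^U, \gamma_p^U\}$. By symmetry assume $\varepsilon = \beta_p^U = a_p v_U$. The second triangle of the flipped quadrilateral shares $\varepsilon$ with $t$ and therefore has the form $(a_p, v_U, w)$ for some vertex $w$ distinct from $a_{p+1}$ and $v_U$, otherwise the two triangles would coincide. The quadrilateral has cyclic vertices $a_{p+1}, a_p, w, v_U$ and $\varepsilon' = a_{p+1} w$; consequently the triangle of $V$ incident to $\alpha_p$ is $(a_p, a_{p+1}, w)$, with $\beta_p^V = a_p w$ and $\gamma_p^V = \varepsilon' = a_{p+1} w$. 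The crucial observation is that sliding $a_p$ along $\alpha_p$ to $a_{p+1}$ collapses the entire quadrilateral onto the single triangle $(a_{p+1}, v_U, w)$, irrespective of which diagonal was drawn. In $U\contract{p}$, the arc $\beta_p^U = a_p v_U$ is removed after becoming isotopic to $\gamma_p^U = a_{p+1} v_U$, while $a_p w$ is relabeled to $a_{p+1} w$; in $V\contract{p}$, the arc $\beta_p^V = a_p w$ is removed after becoming isotopic to $\gamma_p^V = \varepsilon' = a_{p+1} w$, while $\gamma_p^U = a_{p+1} v_U$ is undisturbed. The third edge $v_U w$ of the inner triangle is unchanged in both cases, and $U$ and $V$ coincide outside the quadrilateral, so $U\contract{p}$ and $V\contract{p}$ have identical arc sets.

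The principal obstacle is the bookkeeping in Case 2: one must ensure that the slide produces isotopies only between the predicted pairs $(\beta_p, \gamma_p)$ and no others, and that the two deletions leave the same triangle $(a_{p+1}, v_U, w)$ inside the quadrilateral and the same arcs outside. Since the deletion preserves triangulation homeomorphy, as already noted, the argument descends to $\MF(\Sigma_n)$ and establishes the proposition.
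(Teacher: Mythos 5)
Your proof is correct and follows essentially the same route as the paper's: you analyse how the deletion of $a_p$ interacts with the flip quadrilateral, your Case 2 being the paper's ``quadrilateral shrinks to a triangle'' case and your Case 1 its ``deformed or unaffected'' case. Two parenthetical claims are false for surfaces with topology --- in Case 1 that the remaining corners of the flip quadrilateral are distinct from $a_p$, and in Case 2 that $w$ is distinct from $a_{p+1}$ and $v_U$ (here triangles and quadrilaterals can have repeated vertices without coinciding, e.g.\ when loop arcs are involved) --- but neither is actually needed: the collapse of $\alpha_p$ and the persistence of the two flip triangles go through regardless of such coincidences. Finally, note that in $\MF(\Sigma_{n-1})$ the two deleted triangulations of your Case 1 may still coincide, since deletion preserves homeomorphy but not non-homeomorphy; this costs you nothing because the conclusion being proved is a disjunction, but it is why the paper phrases the non-collapsing case as ``identical or related by a flip'' rather than asserting a flip outright.
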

\begin{proof}
Consider the quadrilateral whose diagonals are exchanged by the flip relating $U$ and $V$. The deletion of vertex $a_p$ either shrinks this quadrilateral to a triangle, deforms it to another quadrilateral, or leaves this quadrilateral unaffected. In the former case, $U\contract{p}$ and $V\contract{p}$ are identical because the deletion then removes the two arcs exchanged by the flip. In the other two cases, $U\contract{p}$ and $V\contract{p}$ can also be identical (while vertex deletion preserves homeomorphy, it does not always preserve non-homeomorphy), but if they are not, they differ exactly on the (possibly deformed) quadrilateral. More precisely, they can be obtained from one another by the flip that exchanges the diagonals of this quadrilateral.
\end{proof}

In the sequel, a flip between two triangulations $U$ and $V$ in $\MF(\Sigma_n)$ is called \emph{incident to arc $\alpha_p$} when $U\contract{p}$ is identical to $V\contract{p}$.

When $\Sigma$ is a disc, the flips incident to arc $\alpha_p$ are exactly the ones that affect the triangle incident to this arc within a triangulation \cite{Pournin2014}. When $\Sigma$ is not a disc, these flips are still incident to $\alpha_p$, but they are not necessarily the only ones. For instance, the unique triangulation in $\MF(\Gamma_1)$ and the four triangulations in $\MF(\Gamma_2)$ are depicted in Fig. \ref{Afigure.2.1}. Since $\MF(\Gamma_1)$ has a single element, we have:
\begin{figure}
\begin{centering}
\includegraphics{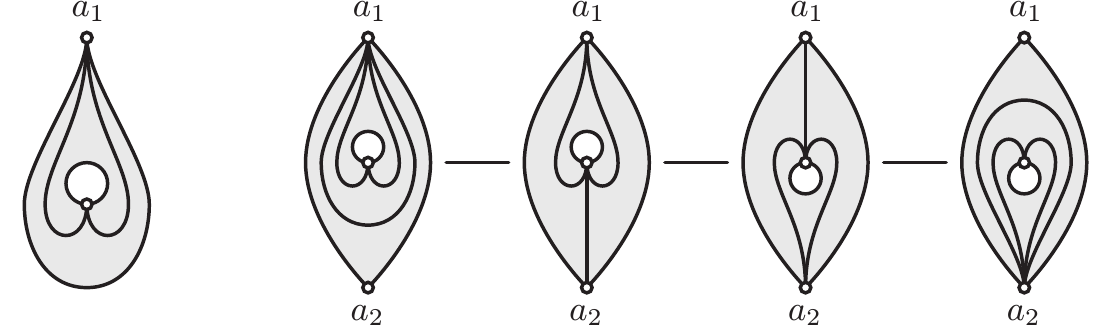}
\caption{The unique triangulation in $\MF(\Gamma_1)$ (left) and the four triangulations in $\MF(\Gamma_2)$. The lines between the latter four triangulations depict $\mathcal{MF}(\Gamma_2)$.}\label{Afigure.2.1}
\end{centering}
\end{figure}

\begin{proposition}\label{Aproposition.2.1}
If $T$ is one of the four triangulations in $\MF(\Gamma_2)$, then any flip carried out in $T$ is incident to both $\alpha_1$ and $\alpha_2$.
\end{proposition}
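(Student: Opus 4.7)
The key observation is that the deletion operation $T \mapsto T\contract{p}$ sends triangulations of $\Sigma_n$ to triangulations of $\Sigma_{n-1}$, and it is defined on the moduli of flip-graphs. In our situation with $\Sigma = \Gamma$ and $n = 2$, deleting either vertex $a_1$ or $a_2$ produces an element of $\MF(\Gamma_1)$. The first line of Figure \ref{Afigure.2.1} tells us that $\MF(\Gamma_1)$ consists of a single triangulation.

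My plan is to exploit this directly. First, I would observe that for any triangulation $T$ in $\MF(\Gamma_2)$ and any $p \in \{1,2\}$, the triangulation $T\contract{p}$ must coincide with the unique element of $\MF(\Gamma_1)$, simply because there is nowhere else for it to go. Consequently, if $U$ and $V$ are any two triangulations in $\MF(\Gamma_2)$ whatsoever, we have $U\contract{p} = V\contract{p}$ for both $p = 1$ and $p = 2$.

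Now suppose $U$ and $V$ are related by a flip. By the definition recalled just before the proposition, the flip relating them is incident to arc $\alpha_p$ precisely when $U\contract{p} = V\contract{p}$. Since we just observed that this equality holds for both values of $p$, the flip is incident to both $\alpha_1$ and $\alpha_2$, which is exactly the claim.

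There is essentially no obstacle here; the proposition is a small sanity check that uses only the triviality of $\MF(\Gamma_1)$ together with the fact that vertex deletion is well defined on modular flip-graphs (which was noted in the paragraph introducing the notation $T\contract{p}$). The role of the statement in the paper is presumably to serve as a base case or illustrative example for the more substantial incidence arguments that follow.
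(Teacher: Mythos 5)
Your proof is correct and is essentially identical to the paper's own justification, which consists of the single remark that $\MF(\Gamma_1)$ has only one element, so $U\contract{p}$ and $V\contract{p}$ must coincide for $p\in\{1,2\}$ and every flip is incident to both $\alpha_1$ and $\alpha_2$ by definition. Your write-up just spells out the same one-line argument in more detail.
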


The modular flip-graph of $\Gamma_2$ is shown in Fig. \ref{Afigure.2.1}. In this flip-graph, the third triangulation from the left is obtained from the second one by replacing any of the two interior arcs incident to $a_1$ by an interior arc incident to $a_2$. Assume that the removed arc is the one on the left. In this case, the triangle incident to $\alpha_1$ is not affected by the flip. Yet, this flip is incident to $\alpha_1$ because of Proposition \ref{Aproposition.2.1}.

Now assume that $U$ and $V$ are two arbitrary triangulations that belong to $\MF(\Sigma_n)$. Consider a sequence $(T_i)_{0\leq{i}\leq{k}}$ of triangulations in $\MF(\Sigma_n)$ so that $T_0=U$, $T_k=V$, and $T_{i-1}$ can be transformed into $T_i$ by a flip whenever $0<i\leq{k}$. Such a sequence will be called a path of length $k$ from $U$ to $V$, and can be alternatively thought of as a sequence of flips that transform $U$ into $V$. According to Proposition \ref{Aproposition.2.0}, removing unnecessary triangulations from the sequence $(T_i\contract{p})_{0\leq{i}\leq{k}}$ results in a path from $U\contract{p}$ to $V\contract{p}$ and the number of triangulations that need be removed from the sequence is equal to the number of flips incident to $\alpha_p$ along $(T_i)_{0\leq{i}\leq{k}}$. In other words:

\begin{lemma}\label{Alemma.2.1}
Let $U$ and $V$ be two triangulations in $\MF(\Sigma_n)$. If $f$ flips are incident to arc $\alpha_p$ along a path of length $k$ between $U$ and $V$, then there exists a path of length $k-f$ between $U{\contract}p$ and $V\mathord{\contract}p$.
\end{lemma}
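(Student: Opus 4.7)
The plan is to apply the deletion operation pointwise along an arbitrary path from $U$ to $V$, and then to collapse the resulting sequence of triangulations in $\MF(\Sigma_{n-1})$ into a genuine path.

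Concretely, I would fix a path $T_0 = U, T_1, \ldots, T_k = V$ in $\MF(\Sigma_n)$ realizing the hypothesis and consider the induced sequence $T_0\contract p, T_1\contract p, \ldots, T_k\contract p$. By Proposition~\ref{Aproposition.2.0}, each consecutive pair $T_{i-1}\contract p$, $T_i\contract p$ is either identical or related by a single flip. By the very definition of a flip being incident to $\alpha_p$, the identical case occurs at exactly the $f$ indices $i$ at which the flip $T_{i-1}\to T_i$ is incident to $\alpha_p$. Collapsing each maximal run of equal consecutive terms to a single term then removes exactly $f$ entries from the sequence, leaving $k-f+1$ triangulations in which every two consecutive terms differ by a flip. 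This reduced sequence is therefore a path of length $k-f$ from $U\contract p$ to $V\contract p$, as required.

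I do not anticipate any serious obstacle: the lemma is essentially a bookkeeping statement that combines the definition of a flip being incident to $\alpha_p$ with Proposition~\ref{Aproposition.2.0}. The only minor point worth checking is that the collapsed sequence is a valid path, which is immediate since we are working with the graph-theoretic path structure in the modular flip-graph (so repeated consecutive vertices can simply be discarded).
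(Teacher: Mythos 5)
Your proposal is correct and matches the paper's own argument: the paper likewise applies the deletion pointwise along the path, invokes Proposition~\ref{Aproposition.2.0} to see that consecutive images are identical or flip-related, and removes exactly the $f$ duplicated entries corresponding (by definition of incidence to $\alpha_p$) to the flips incident to $\alpha_p$.
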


Note that when $\Sigma$ is a disc, this lemma is exactly Theorem 3 from \cite{Pournin2014}. A path between two triangulations $U$ and $V$ in $\MF(\Sigma_n)$ is called geodesic if its length is minimal among all the paths between $U$ and $V$. The length of any such geodesic is equal to the distance of $U$ and $V$ in $\mathcal{MF}(\Sigma_n)$, denoted by $d(U,V)$. Invoking Lemma \ref{Alemma.2.1} with a geodesic between $U$ and $V$ immediately yields:

\begin{theorem}\label{Atheorem.2.3}
Let $n$ be an integer greater than $1$. Further consider two triangulations $U$ and $V$ in $\MF(\Sigma_n)$. If there exists a geodesic between $U$ and $V$ along which at least $f$ flips are incident to arc $\alpha_p$, then the following inequality holds:
$$
d(U,V)\geq{d(U{\contract}p,V{\contract}p)+f}\mbox{.}
$$
\end{theorem}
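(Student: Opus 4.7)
The plan is to derive Theorem \ref{Atheorem.2.3} as an immediate corollary of Lemma \ref{Alemma.2.1}, since the theorem is essentially just that lemma specialized to a geodesic and rearranged. Let $k = d(U,V)$ denote the distance of $U$ and $V$ in $\MF(\Sigma_n)$.

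First, I would fix a geodesic between $U$ and $V$ along which at least $f$ flips are incident to arc $\alpha_p$; such a geodesic exists by hypothesis. By definition of the distance, this geodesic is a path of length exactly $k$. Applying Lemma \ref{Alemma.2.1} to this geodesic (with the number of flips incident to $\alpha_p$ being $f$) produces a path from $U\contract{p}$ to $V\contract{p}$ of length $k - f$ in $\MF(\Sigma_{n-1})$.

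Next, since $d(U\contract{p}, V\contract{p})$ is the minimum length of any path between $U\contract{p}$ and $V\contract{p}$, this length is bounded above by the length of the path just produced, giving
$$
d(U\contract{p}, V\contract{p}) \leq k - f.
$$
Substituting $k = d(U,V)$ and rearranging yields the stated inequality $d(U,V) \geq d(U\contract{p}, V\contract{p}) + f$.

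There is no real obstacle here: Proposition \ref{Aproposition.2.0} ensures that consecutive deleted triangulations $T_{i-1}\contract{p}$ and $T_i\contract{p}$ either coincide or differ by a flip, and Lemma \ref{Alemma.2.1} already packages the counting argument that says exactly $f$ of the $k$ transitions collapse (the ones incident to $\alpha_p$) while the remaining $k - f$ transitions become flips in $\MF(\Sigma_{n-1})$. The only thing to check is that the hypothesis $n \geq 2$ is used precisely to make the deletion operation well defined, as discussed just before Proposition \ref{Aproposition.2.0}.
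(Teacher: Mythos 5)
Your proposal is correct and is exactly the paper's argument: the paper states that the theorem follows by "invoking Lemma \ref{Alemma.2.1} with a geodesic between $U$ and $V$," which is precisely your chain of reasoning (geodesic of length $k=d(U,V)$, Lemma \ref{Alemma.2.1} yields a path of length $k-f$ between $U\contract{p}$ and $V\contract{p}$, hence $d(U\contract{p},V\contract{p})\leq k-f$). The only cosmetic point is that with "at least $f$" flips incident to $\alpha_p$ the lemma actually gives a path of length $k-f'$ for some $f'\geq f$, which only strengthens the bound, so nothing is lost.
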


In well defined situations, at least two flips along any geodesic are incident to a given boundary arc. This may be the case when one of the triangulations at the ends of the geodesic has a well placed \emph{ear}, i.e. a triangle with two edges in the privileged boundary as shown on the left of Fig. \ref{Afigure.2.05}. In the figure, these two edges are $\alpha_p$ and $\alpha_q$, and the vertex they share is $a_q$. In this case, we will say that the triangulation has an ear \emph{in $a_q$}. The following result, proven in \cite{Pournin2014} when $\Sigma$ is a disc, still works in the more general case at hand:

\begin{lemma}\label{Alemma.2.75}
Consider two triangulations $U$ and $V$ in $\MF(\Sigma_n)$. Further consider two distinct arcs $\alpha_p$ and $\alpha_q$ on the privileged boundary of $\Sigma_n$ so that $a_q$ is a vertex of $\alpha_p$. If $U$ has an ear in $a_q$ and if the triangles of $V$ incident to $\alpha_p$ and to $\alpha_q$ do not have a common edge, then for any geodesic between $U$ and $V$, there exists $r\in\{p,q\}$ so that at least two flips along this geodesic are incident to $\alpha_r$.
\end{lemma}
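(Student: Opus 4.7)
The plan is to argue by contradiction: assume that along some geodesic $T_0 = U, T_1, \ldots, T_k = V$ in $\MF(\Sigma_n)$, the numbers $f_p$ and $f_q$ of flips incident to $\alpha_p$ and $\alpha_q$ respectively both satisfy $f_p \le 1$ and $f_q \le 1$, and derive a contradiction from the ear structure at $a_q$.

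First I would track, for each step $i$, the triangles $\tau_p^i$ and $\tau_q^i$ of $T_i$ incident to $\alpha_p$ and $\alpha_q$. The key local observation is that $\tau_p^{i} \ne \tau_p^{i-1}$ forces the flipped edge to be an interior edge of $\tau_p^{i-1}$, in which case the flip's quadrilateral has $\alpha_p$ as one of its boundary sides, and the mechanism spelled out in the proof of Proposition~\ref{Aproposition.2.0} makes this flip incident to $\alpha_p$. Hence $\tau_p$ changes at most $f_p$ times and $\tau_q$ at most $f_q$ times. Now $\tau_p^0$ is the ear $\tau$, which has $\alpha_q$ as an edge, whereas the hypothesis on $V$ prevents $\tau_p^V$ from containing $\alpha_q$ (otherwise $\tau_p^V$ would coincide with $\tau_q^V$ and they would share $\alpha_q$). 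Thus $\tau_p$ changes at least once, and symmetrically so does $\tau_q$; combined with $f_p, f_q \le 1$, this forces $f_p = f_q = 1$, with $\tau_p$ and $\tau_q$ each changing exactly once, at unique steps $i_p$ and $i_q$.

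Next I would establish $i_p = i_q$ using the symmetry that $\tau_p^i = \tau$ if and only if the triangle at $\alpha_p$ has $\alpha_q$ as an edge, if and only if an ear sits at $a_q$, if and only if $\tau_q^i = \tau$. Just before step $i_p$ one has $\tau_p = \tau$, so $\tau_q = \tau$ there as well and $i_q \ge i_p$; symmetrically $i_p \ge i_q$. Call the common step $i_0$ and the corresponding flip $F_0$. Since $F_0$ changes $\tau_p$ from $\tau$ it must flip an interior edge of $\tau$, and the ear has the unique interior edge $\delta = [a_p, a_{q+1}]$; so $F_0$ replaces $\delta$. Let $w$ denote the third vertex of the triangle of $T_{i_0 - 1}$ adjacent to $\tau$ across $\delta$: flipping $\delta$ produces the two triangles $(a_p, a_q, w)$ and $(a_q, a_{q+1}, w)$ in $T_{i_0}$, which share the new diagonal $[a_q, w]$. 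Because no later flip is incident to $\alpha_p$ or $\alpha_q$, these two triangles must be $\tau_p^V$ and $\tau_q^V$; they therefore share the edge $[a_q, w]$, contradicting the hypothesis on $V$.

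The point I expect to require the most care is the distinction, peculiar to the non-disc setting, between a flip being incident to $\alpha_p$ and a flip actually changing the triangle at $\alpha_p$: these need not coincide, as the example of $\MF(\Gamma_2)$ illustrates. Fortunately the plan uses only the one-sided implication ``changes $\tau_p$ implies is incident to $\alpha_p$'', which is exactly the quadrilateral-shrinking phenomenon behind Proposition~\ref{Aproposition.2.0}.
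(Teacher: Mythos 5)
Your proposal is correct and follows essentially the same route as the paper: both arguments hinge on the facts that a flip changing the triangle incident to a boundary arc is incident to that arc, that the only single flip destroying the ear replaces its interior edge $[a_p,a_{q+1}]$ by $[a_q,w]$, and that the two resulting triangles share the new diagonal, contradicting the hypothesis on $V$. Your version merely reorganizes this as a symmetric contradiction (tracking both triangles and showing the unique changes coincide), whereas the paper assumes one flip incident to $\alpha_p$ and exhibits a second flip incident to $\alpha_q$; the content is the same.
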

\begin{proof}
Assume that $U$ has an ear in $a_q$ and that the triangles of $V$ incident to $\alpha_p$ and to $\alpha_q$ do not have a common edge. In this case, $U$ and $V$ are as shown respectively on the left and on the right of Fig. \ref{Afigure.2.05}. Note that vertices $b$ and $c$ represented in this figure can be identical. At least one flip along any path between $U$ and $V$ is incident to arc $\alpha_p$ because the triangles of $U$ and of $V$ incident to this arc are distinct.

Consider a geodesic $(T_i)_{0\leq{i}\leq{k}}$ from $U$ to $V$ and assume that only one of the flips along this geodesic is incident to $\alpha_p$, say the $j$-th one. This flip must then be as shown in the center of Fig. \ref{Afigure.2.05}.
\begin{figure}
\begin{centering}
\includegraphics{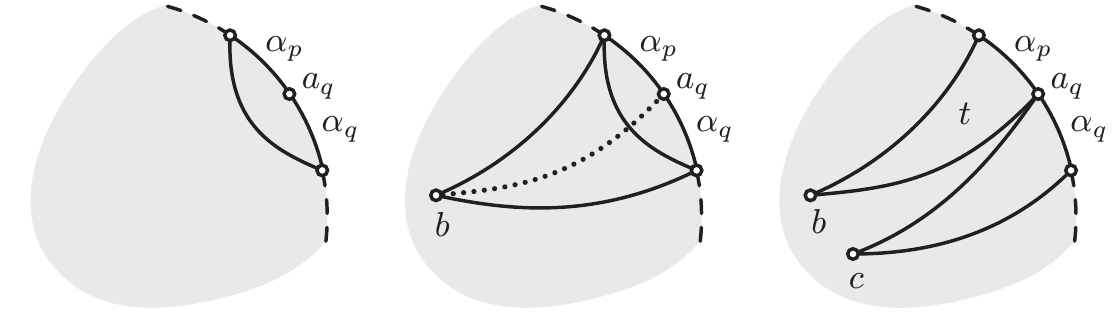}
\caption{Triangulations $U$ (left) and $V$ (right) from the statement of Lemma \ref{Alemma.2.75}. The $j$-th flip along the geodesic used in the proof of this lemma is shown in the center, where the solid edges belong to $T_{j-1}$, and the introduced edge is dotted.}\label{Afigure.2.05}
\end{centering}
\end{figure}
Not only is it incident to $\alpha_p$ but also to $\alpha_q$. Moreover, the triangle $t$ of $V$ incident to $\alpha_p$ already belongs to $T_j$. Now observe that the triangle of $T_j$ incident to $\alpha_q$ shares an edge with $t$. By assumption, the triangle of $V$ incident to $\alpha_q$ does not have this property. Therefore, at least one of the last $k-j$ flips along $(T_i)_{0\leq{i}\leq{k}}$ must affect the triangle incident to $\alpha_q$. This flip is then the second flip incident to $\alpha_q$ along the geodesic.
\end{proof}

\subsection{A projection lemma}\label{Sec.projlemma}

Here we briefly describe a result from \cite{DisarloParlier2014} in our setting and its implications on our diameter estimates. This lemma is about two triangulations $U$ and $V$ of $\Sigma_n$ with arcs in common. It tells that these arcs must also be arcs of all the triangulations on any geodesic between $U$ and $V$ in flip-graph $\mathcal{F}(\Sigma_n)$. This generalizes Lemma 3 from \cite{SleatorTarjanThurston1988}, originally proven in the case of a disc with marked boundary points. Formally:

\begin{lemma}[Projection Lemma]\label{lem:projlemma}
Let $U$ and $V$ be two triangulations of $\Sigma_n$. Further consider a geodesic $(T_i)_{0\leq{i}\leq{k}}$ from $U$ to $V$ in graph $\mathcal{F}(\Sigma_n)$. If $\mu$ is a multi arc common to $U$ and to $V$, then $\mu$ is also an arc of $T_i$ whenever $0<i<k$. 
\end{lemma}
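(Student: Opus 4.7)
The plan is to reduce to the case $|\mu|=1$ and then argue by contradiction. Once the single-arc statement is established, it applies to each arc of $\mu$ separately, so I may assume $\mu=\{\alpha\}$ with $\alpha\in U\cap V$. Suppose for contradiction that $(T_i)_{0\le i\le k}$ is a geodesic and some $T_\ell$ does not contain $\alpha$. Let $i$ be the smallest index with $\alpha\notin T_i$ (so $i\ge 1$) and $j$ the smallest index $\ge i$ with $\alpha\in T_j$ (so $j\le k$). The flip $T_{i-1}\to T_i$ then removes $\alpha$ in exchange for the other diagonal $\beta_i$ of the quadrilateral around $\alpha$ in $T_{i-1}$, and symmetrically $T_{j-1}\to T_j$ reintroduces $\alpha$ in exchange for some arc $\beta_{j-1}\in T_{j-1}$.

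The central step is to track, at each index $i\le\ell\le j-1$, the unique arc $\beta_\ell\in T_\ell$ that crosses $\alpha$. This is well-defined by induction: $\beta_i$ is the only arc of $T_i$ crossing $\alpha$, and a subsequent flip $T_\ell\to T_{\ell+1}$ either leaves $\beta_\ell$ untouched (then $\beta_{\ell+1}=\beta_\ell$) or flips $\beta_\ell$ to a new arc; this new arc must again cross $\alpha$, for otherwise $T_{\ell+1}$ would contain $\alpha$, forcing $\ell+1=j$. I then define $T'_\ell:=(T_\ell\setminus\{\beta_\ell\})\cup\{\alpha\}$ for each such $\ell$. Each $T'_\ell$ is a triangulation containing $\alpha$, since the arcs of $T_\ell$ other than $\beta_\ell$ are all disjoint from $\alpha$. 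A direct check shows that $T'_i=T_{i-1}$, $T'_{j-1}=T_j$, and for intermediate indices the consecutive triangulations $T'_\ell$ and $T'_{\ell+1}$ either coincide (when the flip at step $\ell+1$ involves $\beta_\ell$) or are related by the same flip as $T_\ell$ and $T_{\ell+1}$ (when it does not). Deleting repetitions yields a path from $T_{i-1}$ to $T_j$ of length at most $j-i-1$, strictly less than the sub-path length $j-i+1$, contradicting the geodesic assumption.

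The hard part is verifying the arc-tracking invariant and the validity of the substitution $\beta_\ell\mapsto\alpha$. Concretely, one must check that the unique-crossing-arc property persists under every flip of the sub-path, and that replacing $\beta_\ell$ by $\alpha$ in $T_\ell$ produces a set of pairwise disjoint arcs triangulating $\Sigma_n$. Both points follow from the local nature of a flip: it modifies a triangulation only inside a quadrilateral whose four boundary arcs are disjoint from $\alpha$, so only the flipped arc can change its intersection pattern with $\alpha$ at that step. This locality is what makes the arc $\beta_\ell$ an essentially unambiguous stand-in for $\alpha$ throughout the interval $[i,j-1]$, and it is the key point to spell out carefully.
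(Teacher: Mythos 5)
The paper does not actually prove this lemma --- it is imported from \cite{DisarloParlier2014} --- so the comparison here is with the known argument rather than with an in-paper proof. Your reduction to a single arc, the substitution $T'_\ell=(T_\ell\setminus\{\beta_\ell\})\cup\{\alpha\}$, and the final length count are all fine \emph{if} the invariant ``$\beta_\ell$ is the unique arc of $T_\ell$ crossing $\alpha$'' really propagates. It does not, and this is a genuine gap. The locality claim --- ``a flip modifies the triangulation only inside a quadrilateral whose four boundary arcs are disjoint from $\alpha$'' --- fails exactly when $\beta_\ell$ is one of the four \emph{sides} of the flip quadrilateral rather than the flipped diagonal. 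In that case the newly introduced diagonal can essentially intersect $\alpha$, so after the flip there are two arcs crossing $\alpha$: the tracked arc $\beta_\ell$ and the new diagonal. Concretely, already in the hexagon $1,\dots,6$: take $T_\ell=\{26,36,35\}$ and $\alpha=13$, which crosses only $26$. Flipping $36$ to $25$ (a flip in the quadrilateral $6235$, which has $26$ as a side) yields $T_{\ell+1}=\{26,25,35\}$, and now \emph{both} $26$ and $25$ cross $13$. Your rule would set $\beta_{\ell+1}=26$, but then $T'_{\ell+1}=\{25,35,13\}$ is not a triangulation since $25$ crosses $13$; the claim that $T'_\ell$ and $T'_{\ell+1}$ coincide or differ by a flip also collapses. (Of course no contradiction with the lemma arises --- the lemma only forbids this along a geodesic between triangulations both containing $\alpha$, which is precisely what you are trying to prove --- but your inductive step was asserted for an arbitrary flip along the path, and it is false there.)

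To repair this one has to handle the possibility of several arcs of $T_\ell$ crossing $\alpha$ simultaneously, which is what the actual proofs do. Sleator--Tarjan--Thurston (for the polygon) and Disarlo--Parlier (for general surfaces) work with \emph{all} arcs crossing $\alpha$ at once: one defines a projection that surgers every arc of $T$ along $\alpha$ (replacing each crossing arc by arcs disjoint from $\alpha$) and completes to a triangulation containing $\alpha$, and then shows this projection is $1$-Lipschitz on the flip graph and is the identity on triangulations containing $\alpha$; strong convexity of the set of triangulations containing $\mu$ follows. Your single-arc-tracking shortcut is essentially a special case of this projection that only works when the crossing number stays equal to one, and the example above shows it need not.
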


It is absolutely essential to note that the above lemma does {\it not} necessarily hold in $\MFS$. However, it clearly does hold if an arc or a multi arc is invariant under all elements of $\mcg (\Sigma_n)$. Namely, consider an arc $\alpha$ {\it parallel} to the privileged boundary (by parallel we mean that the portion of $\Sigma_n$ bounded by this arc and by a part of the privileged boundary is a disc). Then, as any element of $\mcg (\Sigma_n)$ fixes the privileged boundary arcs individually, arc $\alpha$ is also invariant. In particular, assume that $\alpha$ has vertices $a_1$ and $a_3$. By the above, $\alpha$ is never removed along a geodesic between two triangulations containing this arc. So naturally we get a geodesically convex and isometric copy of $\MF (\Sigma_{n-1})$ inside $\MFS$. As such:

\begin{proposition}\label{prop:monotonicity}
$\diam(\MF (\Sigma_{n-1})) \leq \diam(\MFS).$
\end{proposition}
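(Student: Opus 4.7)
The plan is to construct an isometric, geodesically convex copy of $\MF(\Sigma_{n-1})$ inside $\MFS$ using a fixed ear attached to the privileged boundary, and then deduce the diameter inequality immediately. Identify $\Sigma_{n-1}$ with the surface obtained from $\Sigma_n$ by forgetting the second vertex $a_2$ of the privileged boundary, so that the vertices of $\Sigma_{n-1}$ on the privileged boundary are $a_1, a_3, a_4, \ldots, a_n$. Let $\alpha$ denote the arc in $\Sigma_n$ from $a_1$ to $a_3$ that, together with the boundary segments $\alpha_1$ and $\alpha_2$, cuts off a triangular disc containing the vertex $a_2$ (an ear in $a_2$).

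First I would define an embedding $\iota : \MF(\Sigma_{n-1}) \to \MFS$ by sending a triangulation $T'$ of $\Sigma_{n-1}$ to the triangulation $\iota(T')$ of $\Sigma_n$ obtained from $T'$ by adjoining the arc $\alpha$ and the ear triangle $a_1 a_2 a_3$. This is well defined on homeomorphism classes because $\alpha$ is characterized (up to isotopy) purely in terms of the privileged boundary arcs $\alpha_1, \alpha_2$, which are fixed individually by every element of $\mcg(\Sigma_n)$; hence $\alpha$ is $\mcg(\Sigma_n)$-invariant and so is the ear. Any flip in $T'$ lifts to a flip in $\iota(T')$ (none of the flips touches $\alpha$ or the ear), so $d_{\MFS}(\iota(U'), \iota(V')) \leq d_{\MF(\Sigma_{n-1})}(U', V')$.

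For the opposite inequality, consider any geodesic $(T_i)_{0 \leq i \leq k}$ from $\iota(U')$ to $\iota(V')$ in $\MFS$. Both endpoints contain the $\mcg(\Sigma_n)$-invariant arc $\alpha$, so the Projection Lemma (Lemma \ref{lem:projlemma}), which carries over to $\MFS$ precisely for invariant arcs as discussed just before the statement of the proposition, forces every $T_i$ to contain $\alpha$. Since $\alpha$ bounds a disc whose only interior vertex is $a_2$, the portion of $T_i$ inside that disc is the unique ear triangle, and no flip along the geodesic lies in that disc. Deleting $\alpha$ and the ear from every $T_i$ therefore yields a path of the same length in $\MF(\Sigma_{n-1})$ from $U'$ to $V'$, giving $d_{\MF(\Sigma_{n-1})}(U', V') \leq d_{\MFS}(\iota(U'), \iota(V'))$. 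Taking $U', V'$ to realize $\diam(\MF(\Sigma_{n-1}))$ then yields the stated inequality.

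The only subtle point — and the step I would be most careful about — is justifying the use of the projection lemma in the modular setting: the lemma as stated is for the (un-quotiented) flip-graph, and a naive quotient of the Projection Lemma can fail. One must emphasize that $\alpha$, being cut off by two specific boundary arcs $\alpha_1$ and $\alpha_2$, is pointwise invariant under the full mapping class group, so a geodesic in the quotient lifts to a geodesic in the cover whose endpoints both contain $\alpha$, and then the usual Projection Lemma applies. Once this is in place, the rest is a direct bookkeeping argument about the ear.
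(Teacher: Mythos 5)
Your proposal is correct and follows essentially the same route as the paper: the authors also use the boundary-parallel arc $\alpha$ from $a_1$ to $a_3$, note that it is invariant under $\mcg(\Sigma_n)$ so that the Projection Lemma survives the quotient, and conclude that this yields a geodesically convex isometric copy of $\MF(\Sigma_{n-1})$ inside $\MFS$. Your write-up simply makes explicit the embedding and the two distance inequalities that the paper leaves implicit.
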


Note that, by observing that there are points outside the isometric copy of $\MF (\Sigma_{n-1})$, it is not too difficult to see that in fact the above inequality is strict but we make no particular use of that in the sequel.

\section{Upper bounds}
In this section we prove upper bounds on the diameter of modular flip-graphs vdepending on the topology of the underlying surface. 

\subsection{A general upper bound}
We begin with the following general upper bound. 
\begin{theorem}\label{thm:basicupperbound}
For any $\Sigma$ there exists a constant $K_{\Sigma}$ such that 
$$
\diam(\MFS)\leq 4 n + K_{\Sigma}.
$$
\end{theorem}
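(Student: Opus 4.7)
The plan is to produce a reference triangulation $T_\star \in \MF(\Sigma_n)$ and show that every $T \in \MF(\Sigma_n)$ satisfies
$$
d(T, T_\star) \le 2n + K'_\Sigma,
$$
for a constant $K'_\Sigma$ depending only on $\Sigma$. The bound $\diam(\MF(\Sigma_n)) \le 4n + K_\Sigma$ with $K_\Sigma = 2K'_\Sigma$ then follows from the triangle inequality. The construction of $T_\star$ is a \emph{fan with fixed core}: fix once and for all a triangulation $T_0$ of $\Sigma_1$ (the surface with exactly one marked point on the privileged boundary); note that $\MF(\Sigma_1)$ is finite, so $T_0$ is a combinatorial object whose complexity depends only on $\Sigma$. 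Identifying the single privileged-boundary vertex of $T_0$ with $a_1$, we extend $T_0$ to $\Sigma_n$ by appending the standard fan of arcs $a_1 a_3, a_1 a_4, \dots, a_1 a_{n-1}$, filling the polygonal disk bounded by $\alpha_1, \dots, \alpha_{n-1}$ and a distinguished arc $\delta$ from $a_1$ to $a_{n-1}$ parallel to the privileged boundary.

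The bound $d(T, T_\star) \le 2n + K'_\Sigma$ is then established in three steps. First, I would show that in at most $n + C^{(1)}_\Sigma$ flips, $T$ can be transformed into a triangulation $T'$ containing the parallel arc $\delta$; here we exploit the freedom in the modular quotient to pick a representative of $\delta$ (up to the mapping class group action) whose geometric intersection number with the arcs of $T$ is controlled linearly in $n$ with leading coefficient~$1$, together with the standard fact that an arc can be introduced in a number of flips at most equal to its intersection number with the current triangulation. Second, once $\delta$ is present, the Projection Lemma (Lemma \ref{lem:projlemma}, applied to the mapping-class-group-invariant arc $\delta$) implies that all subsequent flips can be done separately in the two regions it cuts. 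In the polygonal side, the restricted triangulation can be flipped to the fan $a_1 a_3, \dots, a_1 a_{n-1}$ in at most $(n - 2) - 3 \le n$ flips by the known associahedron bound. Third, the other side of $\delta$ is a surface homeomorphic to $\Sigma$ with two marked boundary vertices (and thus of fixed topology independent of $n$), so its modular flip-graph is a finite graph with diameter at most some $C^{(2)}_\Sigma$, giving that many flips to match~$T_0$.

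Summing the three contributions yields $d(T, T_\star) \le 2n + (C^{(1)}_\Sigma + C^{(2)}_\Sigma)$, which suffices. The main obstacle is the first step: showing that the separator $\delta$ can be introduced in at most $n + C^{(1)}_\Sigma$ flips for some $C^{(1)}_\Sigma$ independent of $n$. Without the modular quotient this would fail (one could twist $T$ arbitrarily against $\delta$), so the heart of the argument is to exploit the mapping class group action to normalize the position of $T$ relative to $\delta$, reducing the problem to a combinatorial bound on an intersection number. Once this is in place, the remaining two steps are either direct applications of the Projection Lemma combined with the associahedron bound, or the trivial observation that a fixed-topology modular flip-graph has bounded diameter.
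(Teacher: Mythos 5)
Your Steps 2 and 3 are fine, but Step 1 --- which you correctly identify as the heart of the argument --- has a genuine gap, and the justification you sketch for it cannot work. The arc $\delta$ is parallel to the privileged boundary, so (as noted in Section \ref{Sec.projlemma}) it is invariant under every element of $\mcg(\Sigma_n)$; consequently $i(\phi(T),\delta)=i(T,\phi^{-1}(\delta))=i(T,\delta)$ for all $\phi$, and the modular quotient gives you \emph{no} freedom whatsoever to normalize the position of $T$ relative to $\delta$. The intersection number is already a well-defined invariant of the point of $\MFS$, and it can be as large as $2n-O(1)$. Concretely, in $\Gamma_n$ take the pairwise disjoint arcs $\gamma^{(1)},\dots,\gamma^{(n-4)}$, where $\gamma^{(l)}$ runs from $a_2$ to $a_{n-1-l}$ ``around the back'' of the boundary loop (i.e.\ it cuts off a disc containing $a_{n-l},\dots,a_n,a_1$ and an annulus containing $\alpha_0$); each $\gamma^{(l)}$ has both endpoints in the disc side of $\delta$ but is not isotopic into it, so it meets $\delta$ exactly twice, and any triangulation containing this fan has $i(T,\delta)\geq 2n-8$. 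With the standard ``one flip per intersection'' bound, Step 1 then costs $2n+O(1)$ flips, your eccentricity bound becomes $3n+O(1)$, and the triangle inequality only yields $\diam(\MFS)\leq 6n+O(1)$, which does not prove the theorem. Whether $\delta$ can nevertheless always be introduced in $n+O(1)$ flips is exactly the kind of nontrivial claim that would need a real proof, and none is offered.

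For comparison, the paper avoids reference triangulations and intersection numbers entirely: counting interior edges shows the average, over privileged boundary vertices, of the sum of interior degrees in $U$ and $V$ is $4+O(1/n)$, so some vertex $a$ has total interior degree at most $4$; Lemma \ref{Hlemma.3.1} then produces ears at $a$ in both triangulations using at most $4$ flips, and one inducts on $n$. That argument is symmetric in $U$ and $V$ and spends $4$ flips per vertex directly, rather than $2+2$ flips against a fixed target. If you want to salvage your architecture, you would need an independent argument bounding the flip distance from an arbitrary $T\in\MFS$ to some triangulation containing $\delta$ by $n+O(1)$ --- for instance by showing that each doubly-crossing arc can be removed by a single flip that introduces a non-crossing arc --- but this is precisely the missing content.
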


Before proving the theorem, let us give the basic idea of the proof. Consider a triangulation $T$ of $\Sigma_n$ and a vertex $a$ of this surface. Let us call the number of interior arcs of $T$ incident to $a$ the interior degree of $a$ in $T$. For large enough $n$ the average interior degree of the vertices of $T$ can be arbitrarily close to $2$ and thus given any two triangulations $U$ and $V$ the average sum of the interior degrees tends to $4$. We can then choose vertex $a$ (in the privileged boundary) in such a way that its interior degree is at most $4$. We perform flips within $U$ to obtain $\tilde{U}$ and flips within $V$ to obtain $\tilde{V}$ so that $\tilde{U}$ and $\tilde{V}$ both have an ear in $a$. In doing so we can now safely ignore a boundary vertex and repeat the process. 

To quantify how many flips each of the steps described above might cost, we prove the following lemma.

\begin{lemma}\label{Hlemma.3.1}
For $n\geq2$, consider a vertex $a$ in the privileged boundary of $\Sigma_n$ and two triangulations $U$ and $V$ of $\Sigma_n$. If the interior degrees of $a$ in $U$ and in $V$ sum to at most $4$, then there exist two triangulations $\tilde{U}$ and $\tilde{V}$ of $\Sigma_n$, each with an ear in $a$ so that
$$
d (U, \tilde{U}) + d (V, \tilde{V}) \leq 4.
$$
\end{lemma}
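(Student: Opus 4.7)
The plan is to prove the stronger statement: for any triangulation $T$ of $\Sigma_n$ and any vertex $a$ on the privileged boundary, there exists a triangulation $\tilde T$ with an ear in $a$ such that $d(T,\tilde T)$ is at most the interior degree of $a$ in $T$. Once this is established, the lemma follows by applying the stronger statement separately to $U$ and to $V$ and summing the two inequalities, using the hypothesis that the interior degrees of $a$ in $U$ and $V$ sum to at most $4$.

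My first step is to identify the existence of an ear in $a = a_p$ with the presence of a single specific arc. Let $\alpha_p^*$ denote the interior arc with endpoints $a_{p-1}$ and $a_{p+1}$ that, together with the boundary arcs $\alpha_{p-1}$ and $\alpha_p$, bounds an embedded disk containing $a_p$ as its only interior vertex. Then $T$ has an ear in $a_p$ if and only if $\alpha_p^*$ is one of its arcs. A direct consequence is that the arcs of $T$ intersecting $\alpha_p^*$ are precisely the interior arcs of $T$ incident to $a_p$, each intersecting $\alpha_p^*$ in exactly one point: any arc without an endpoint at $a_p$ can be taken disjoint from $\alpha_p^*$ up to isotopy because the disk bounded by $\alpha_p^*$ contains no other vertex, while each interior arc at $a_p$ must cross $\alpha_p^*$ exactly once to leave this disk. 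Therefore, the geometric intersection number of $\alpha_p^*$ with the arcs of $T$ equals the interior degree of $a_p$ in $T$.

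The heart of the argument is then a standard arc-introduction principle: for any triangulation $T$ of $\Sigma_n$ and any arc $e$, there exists a triangulation $\tilde T$ containing $e$ with $d(T, \tilde T) \leq i(e, T)$, where $i(e, T)$ is the number of arcs of $T$ crossing $e$. I would prove this by induction on $i(e, T)$. The base case $i(e, T) = 0$ is immediate. For the inductive step, I would fix an orientation of $e$, single out the arc $f$ of $T$ crossed first by $e$, and flip $f$ in $T$ to obtain a new triangulation $T'$. Analyzing the quadrilateral formed by the two triangles of $T$ sharing $f$ confirms that the flipped arc $f'$ does not cross $e$ near the eliminated intersection, yielding $i(e, T') = i(e, T) - 1$; the induction then closes. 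Applying this principle with $e = \alpha_p^*$ to each of $U$ and $V$ produces the desired $\tilde U$ and $\tilde V$.

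The main obstacle is verifying the inductive step of the arc-introduction principle for general topological types of $\Sigma$. Unlike in the disc case treated in \cite{Pournin2014}, the quadrilateral around $f$ may be degenerate, since loops at $a_p$ or self-folded triangles may appear in the triangulation, so a careful local case analysis is required to ensure that the chosen flip strictly reduces the intersection number. The geometric idea remains the same as in the disc setting, but each degenerate configuration must be handled individually.
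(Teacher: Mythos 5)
Your reduction of the lemma to the claim ``there is a triangulation with an ear in $a$ at flip distance at most the interior degree of $a$ in $T$,'' and the translation of that degree into the intersection number $i(\alpha_p^*,T)$, are both sound (note only that a loop at $a_p$ meets $\alpha_p^*$ twice, not once, which is consistent with the paper's convention that such a loop contributes $2$ to the interior degree). The genuine gap is in the inductive step of your arc-introduction principle. You assert that flipping the first arc $f$ of $T$ crossed by $e=\alpha_p^*$ gives $i(e,T')=i(e,T)-1$, justified only by the local observation that $f'$ avoids $e$ ``near the eliminated intersection.'' But $i(e,T')=i(e,T)-i(e,f)+i(e,f')$, so you must control $i(e,f')$ globally. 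Concretely: the first segment of $e$ lies in a triangle $t$ with corner $a_{p-1}$ and opposite side $f$ (an arc from $a_p$ to some vertex $b$); let $t'$ be the other triangle on $f$ and $c$ its opposite corner. If $c\neq a_p$, then $f'$ joins $a_{p-1}$ to $c$, misses $a_p$, and $i(e,f')=0$, so the step works. But if $c=a_p$ --- equivalently, a side of the flip quadrilateral is a loop at $a_p$ --- then $f'$ joins $a_{p-1}$ to $a_p$ and $i(e,f')=i(e,f)=1$: the count does not drop. Together with the case where $f$ is the radius of a self-folded triangle and is not flippable at all, these are exactly the configurations the paper's proof is devoted to (flip the enclosing loop $\alpha$ instead; if that flip also fails to reduce the degree, a further loop appears and the hypothesis forces the contradiction ``interior degree at least $6$''). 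You correctly flag these configurations but explicitly defer them; since they are the entire substance of the proof, the argument is incomplete as written.

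A second, related point: your proposed strengthening --- one flip per unit of interior degree for a vertex of \emph{arbitrary} degree --- is not delivered by the quadrilateral analysis alone. The paper's escape from the bad configuration above uses the hypothesis that the degree is below $6$; without some such bound you would need an additional argument that a degree-reducing flip always exists when several loops at $a$ are nested. If you retain the hypothesis of the lemma, the case analysis closes exactly as in the paper, but then the intersection-number formalism is a repackaging of the paper's direct argument on the interior degree rather than a shortcut around it.
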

\begin{proof}
\begin{figure}[b]
\begin{centering}
\includegraphics{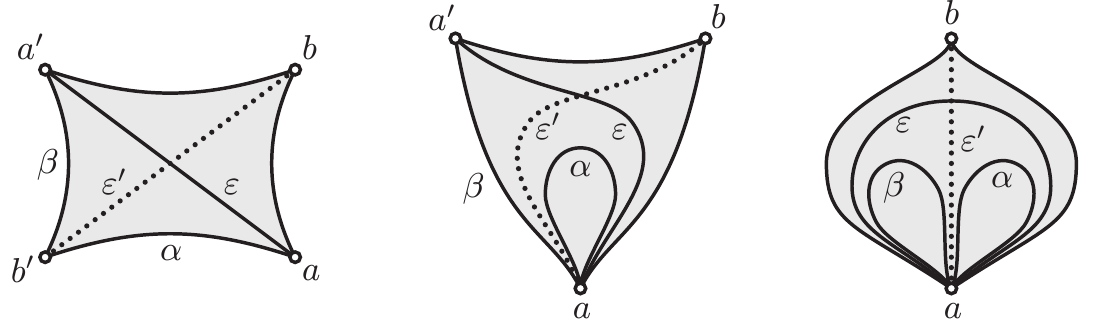}
\caption{The flip dealt with in the proof of Lemma \ref{Hlemma.3.1} (left), and a sketch of the surface when this flip does not reduce the degree of $a$ (center and right).}
\label{fig:LemmePrelim}
\end{centering}
\end{figure}

We shall prove the lemma by showing that there is always a flip in either $U$ or $V$ that reduces the degree of $a$, and thus by iteration, one must flip at most $4$ arcs to reach both $\tilde{U}$ and $\tilde{V}$.

Let $\varepsilon$ be any inner arc incident to $a$ in either $U$ or $V$.

First suppose that $\varepsilon$ is flippable. If flipping $\varepsilon$ reduces the degree of $a$, we flip it. If not, then necessarily the flip quadrilateral of $\varepsilon$ (shown on the left of Fig. \ref{fig:LemmePrelim}) must have a boundary arc, say $\alpha$, with vertex $a$ at its two ends. This situation, sketched in the center of Fig. \ref{fig:LemmePrelim} corresponds to when the vertex labeled $b'$ on the left of the figure is identical to $a$.

As $n$ is not less than $2$, $\alpha$ must be an interior arc. In addition, $\alpha$ is twice incident to $a$ and thus flippable. If flipping $\alpha$ reduces the degree of $a$, we flip $\alpha$ and we can proceed. So suppose flipping $\alpha$ does not decrease the degree of $a$. Then necessarily, the vertex $a'$ (as in Fig. \ref{fig:LemmePrelim}) is the same vertex as $a$. Arcs $\alpha,\beta$ and $\varepsilon$ (see the right side of Fig. \ref{fig:LemmePrelim}) are now three interior arcs twice incident to $a$. Thus the interior degree of $a$ is at least $6$ which is impossible.

Now consider the case where $\varepsilon$ is not flippable. Then it is surrounded by an arc $\varepsilon'$ twice incident to $a$ as in Fig. \ref{fig:LemmePrelim2}.
\begin{figure}
\begin{centering}
\includegraphics{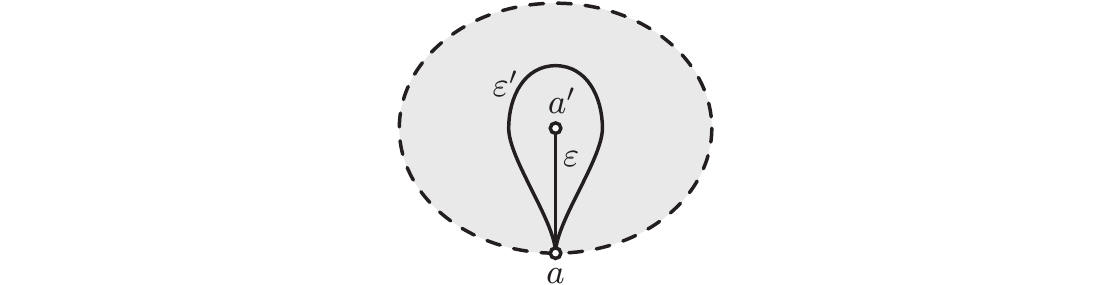}
\caption{When $\varepsilon$ is not flippable}
\label{fig:LemmePrelim2}
\end{centering}
\end{figure}
Flipping $\varepsilon'$ reduces the degree of $a$ because the flip introduces an arc incident to $a'$. 
\end{proof}

Note that Lemma \ref{Hlemma.3.1} holds a fortiori when $U$ and $V$ belong to $\MF(\Sigma_n)$. We can now prove the theorem. 

\begin{proof}[Proof of Theorem \ref{thm:basicupperbound}]
Consider surface $\Sigma_1$ and insert points in its privileged boundary to obtain $\Sigma_n$. The Euler characteristics satisfy
$$
\chi(\Sigma_n) = \chi(\Sigma_1).
$$
A triangulation $T$ of $\Sigma_n$ has $n-1$ more vertices and $n-1$ more triangles than a triangulation $T'$ of $\Sigma_1$. It also has $n-1$ more boundary arcs. By invariance of the Euler characteristic this means that $T$ has exactly $n-1$ more {\it interior} edges than $T'$. As such, the number of interior edges of $T$ is exactly 
$$
n + E_\Sigma\mbox{,}
$$
where $E_\Sigma$ is a precise constant which depends on $\Sigma$ but not on $n$. We now focus our attention on the interior degree of the privileged boundary vertices. The total interior degree of all vertices is $2 (n + E_\Sigma)$.

The sum of the interior degrees of all vertices in two triangulations $U$ and $V$ in $\MF(\Sigma_n)$ is $4(n + E_\Sigma)$. Thus the average sum of interior degrees among the privileged boundary vertices is at most
$$
4 + \frac{4}{n} E_\Sigma.
$$
As such, for $n > 4 E_\Sigma$, there exists a privileged boundary vertex $a$ whose interior degrees in $U$ and in $V$ sum to at most $4$.

We now apply the previous lemma to flip $U$ and $V$ a total of at most $4$ times into two new triangulations with ears in $a$. We treat the new triangulations as if they lay in $\MF(\Sigma_{n-1})$ and we repeat the process inductively until 
$n \leq 4 E_\Sigma$. We end up with two triangulations $\tilde{U}$ and $\tilde{V}$ that only differ on a subsurface homeomorphic to $\Sigma_{n_0}$, where
$$
n_0\leq 4 E_\Sigma.$$
Hence, there is a path of length at most $\diam(\MF(\Sigma_{n_0}))$ between $\tilde{U}$ and $\tilde{V}$. We can now conclude that
$$
d(U,V) \leq 4(n - 4 E_\Sigma) + \diam(\MF(\Sigma_{n_0})) = 4n + K_\Sigma,
$$
where $K_\Sigma$ does not depend on $n$.
\end{proof}

Before looking at more precise bounds for given surface topology, we note that together with the monotonicity from Proposition \ref{prop:monotonicity} we have the following:

\begin{corollary}\label{cor:limits}
For any $\Sigma$ the following limit exists and satisfies
$$
\lim_{n\to \infty}  \frac{ \diam(\MFS) }{n}\leq 4.
$$
\end{corollary}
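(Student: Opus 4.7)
The plan is to derive the corollary directly from Theorem \ref{thm:basicupperbound} and Proposition \ref{prop:monotonicity}. Setting $a_n := \diam(\MFS)$, Theorem \ref{thm:basicupperbound} gives the linear upper bound $a_n \leq 4n + K_\Sigma$, and Proposition \ref{prop:monotonicity} tells us that $(a_n)_{n\geq 1}$ is nondecreasing.

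The numerical half of the conclusion is immediate: dividing the upper bound by $n$ yields
$$
\frac{a_n}{n} \leq 4 + \frac{K_\Sigma}{n},
$$
so $\limsup_{n\to\infty} a_n/n \leq 4$.

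For existence of the limit, I would combine the monotonicity of $(a_n)$ with the per-step bound $a_n \leq a_{n-1} + 4$ that is implicit in the inductive proof of Theorem \ref{thm:basicupperbound} (since deleting one boundary vertex costs at most four flips in each of the two triangulations). Together these confine each increment $a_n - a_{n-1}$ to $[0,4]$, so that the auxiliary sequence $4n - a_n$ is nondecreasing and bounded below by $-K_\Sigma$. A short real-analysis argument applied to this auxiliary sequence then gives convergence of $a_n/n$ to some value in $[0,4]$.

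The step I expect to be the main obstacle is this last one: upgrading $\limsup_n a_n/n \leq 4$ to a genuine limit. The bound on $\limsup$ is immediate from Theorem \ref{thm:basicupperbound} alone, but matching $\liminf$ to $\limsup$ requires using both the monotonicity of $(a_n)$ and the increment control $a_n - a_{n-1} \leq 4$, rather than just the cited results in their weakest form.
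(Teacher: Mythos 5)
Your first step is exactly the paper's: dividing the bound of Theorem \ref{thm:basicupperbound} by $n$ gives $\limsup_{n\to\infty}\diam(\MFS)/n\leq 4$, and that part is correct. The gap is in the existence of the limit. The three facts you extract --- $a_{n-1}\leq a_n$ (Proposition \ref{prop:monotonicity}), $a_n\leq a_{n-1}+4$ (readable off the inductive step of the theorem, at least for $n>4E_\Sigma$), and $a_n\leq 4n+K_\Sigma$ --- do \emph{not} imply that $a_n/n$ converges. Take any sequence with increments $a_n-a_{n-1}\in\{0,4\}$ arranged in alternating blocks whose lengths grow very fast (each block much longer than everything before it): at the end of a long block of zero increments $a_n/n$ is close to $0$, at the end of a long block of increments equal to $4$ it is close to $4$, so $\liminf a_n/n=0$ while $\limsup a_n/n=4$, even though all three hypotheses hold. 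Your auxiliary sequence $b_n=4n-a_n$ is indeed nondecreasing and bounded below, but a nondecreasing sequence bounded below merely converges or tends to $+\infty$; in the first case $a_n/n\to 4$, but in the second you would still need $b_n/n$ to converge, which is the original question in disguise (and $b_n$ is genuinely unbounded in the basic example of the associahedron, where $a_n=2n-10$). So the announced ``short real-analysis argument'' does not exist.

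For comparison, the paper itself offers no more than the same two ingredients you cite --- Corollary \ref{cor:limits} is stated as an immediate consequence of Theorem \ref{thm:basicupperbound} and Proposition \ref{prop:monotonicity}, with no further detail --- so on the existence question your write-up is no worse than the source, but the specific mechanism you propose to close the gap is demonstrably insufficient. A genuine proof of existence requires an input of a different nature, e.g.\ a superadditivity statement of Fekete type for $n\mapsto\diam(\MFS)$ (which one would try to extract from the geodesically convex copies of smaller modular flip-graphs provided by the Projection Lemma), or a proof that $\diam(\MFS)/n$ is eventually monotone. Neither appears in your proposal, and neither is written out in the paper.
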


\subsection{Upper bounds for $\Gamma$}

In this section we prove a much stronger and specific upper bound in the case where our surface is $\Gamma$, a cylinder with a single boundary loop.

\begin{theorem}\label{thm:gammaupper}
The diameter of the modular flip-graphs of $\Gamma$ satisfy
$$
\diam \left( \MFC \right) \leq \frac{5}{2} n -2.
$$
\end{theorem}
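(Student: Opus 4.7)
My plan is to prove the bound by induction on $n$, with the key inductive step reducing $\Gamma_n$ to $\Gamma_{n-2}$ at the cost of at most $5$ flips. Combined with the base cases $D(1)=0$ (since $\MF(\Gamma_1)$ has a single triangulation) and $D(2)=3$ (read off Figure \ref{Afigure.2.1}, in which $\MF(\Gamma_2)$ is a path on four vertices), the recurrence $D(n)\leq D(n-2)+5$, where $D(n)\coloneqq\diam(\MFC)$, yields $D(n)\leq\tfrac{5n}{2}-2$ for both parities.

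For the inductive step, given $U,V\in\MFC$ with $n\geq 3$, the strategy is to find two \emph{non-adjacent} privileged vertices $a_p,a_q$ such that at total cost at most $5$ flips, $U$ and $V$ can be transformed into triangulations $\tilde U,\tilde V$ with ears at both $a_p$ and $a_q$. (Two ears cannot lie at adjacent vertices: the ear arc at $a_p$ joins $a_{p-1}$ to $a_{p+1}$, so $a_{p+1}$ would fail to have interior degree zero.) The ears can then be peeled off via two successive applications of the deletion operation of Subsection \ref{Asubsection.2.2}, producing triangulations inside $\MF(\Gamma_{n-2})$; by the projection principle underlying Proposition \ref{prop:monotonicity}, the distance between them is an upper bound on the distance between the $\tilde U,\tilde V$ we started with, and the induction hypothesis then gives $d(U,V)\leq 5+D(n-2)$, as required.

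The key counting ingredient is that $\chi(\Gamma)=0$, so every triangulation of $\Gamma_n$ has exactly $n+1$ interior arcs, for a total interior degree of $2(n+1)$. Letting $b$ denote the unique non-privileged vertex, the sum of interior degrees at the $n$ privileged vertices is $2(n+1)$ minus the interior degree of $b$. Moreover, a re-reading of the proof of Lemma \ref{Hlemma.3.1} gives the stronger statement that the cost of creating an ear at a given vertex $a$ in both $U$ and $V$ is at most $d_U(a)+d_V(a)$, since each flip decreases the combined degree by one. The search for a good pair then becomes: find non-adjacent $a_p,a_q$ whose \emph{combined} ear-creation cost, properly accounted for the interaction between the two flip sequences, is at most $5$.

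The main obstacle will be to establish that $5$ flips genuinely suffice. A naive averaging bound over non-adjacent pairs yields a combined degree of roughly $8$, which is $3$ too large, and creating the second ear after the first can temporarily increase the degree at $a_q$. The missing savings must be extracted from the presence of the boundary loop vertex $b$: when the interior degree of $b$ in both $U$ and $V$ is large, the averaging at privileged vertices is correspondingly small and a cheap pair is easy to find; when the interior degree of $b$ is small, the structure of the triangles incident to $b$ forces short flip sequences that bring about ears on the privileged boundary directly. Balancing these two regimes and verifying that the cost is always at most $5$ (with no slack, since the bound matches the lower bound of Theorem \ref{thm:gammaintro}) is the delicate part of the argument.
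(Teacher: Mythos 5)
Your reduction scheme (create ears at two non-adjacent privileged vertices, delete both, recurse to $\Gamma_{n-2}$) is sound as a skeleton, and the arithmetic of the recurrence $D(n)\le D(n-2)+5$ with base cases $D(1)=0$, $D(2)=3$ does give the stated bound. But the entire content of the theorem then sits in the claim that $5$ flips always suffice to create the two ears in both triangulations, and that claim is not proved: you observe yourself that averaging over non-adjacent pairs only yields a combined degree of about $8$, and you defer the missing $3$ to an unspecified balancing argument involving the degree of the loop vertex. This is not a routine verification. The loop vertex has interior degree at least $2$ in every triangulation, so the privileged vertices carry combined total degree up to $4n$, and nothing in your argument prevents this from being spread nearly uniformly (combined degree close to $4$ at essentially every privileged vertex), in which case every non-adjacent pair costs about $8$ and no cheap pair exists; the savings would then have to come from flips that decrease two of the relevant degrees at once, or from some other mechanism you do not identify. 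Two further obstacles you name but do not resolve: the flips creating the first ear can raise the interior degree at the second chosen vertex, so the two ear-creation costs are not additive; and for $n=3$ there are no non-adjacent privileged vertices at all, so the inductive step cannot even be applied and the small cases would need separate treatment. Since the recurrence must hold with equality (the bound matches the lower bound of Theorem \ref{thm:gammaintro}), there is no slack anywhere to absorb any of these losses.

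The paper's proof avoids all of this by being direct rather than inductive. Each triangulation of $\Gamma_n$ has $n+1$ interior arcs, at least two of which are already incident to the loop vertex $a_0$; hence at most $n-1$ flips turn each of $U$ and $V$ into a triangulation in which every interior arc is incident to $a_0$. Such a triangulation is determined by the single privileged vertex carrying the pod, and one flip moves that pod to an adjacent privileged vertex, so at most $n/2$ further flips connect the two combed triangulations, giving $2(n-1)+n/2=\tfrac{5}{2}n-2$ in total. If you wish to pursue your inductive route, you would need a genuinely new structural lemma controlling the position of the boundary loop relative to the low-degree privileged vertices; as written, your key step is a conjecture, not a proof.
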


\begin{proof}
Let $U$ and $V$ be triangulations in $\MFC$. Denote by $a_0$ the unique vertex not on the privileged boundary and $\alpha_0$ the boundary loop it belongs to. The basic strategy is to perform flips within both triangulations until all interior arcs are incident to $a_0$ and then find a path between the resulting triangulations. 

We begin by observing that a triangulation in $\MFC$ has $n+1$ interior arcs. Furthermore, any triangulation $T$ of $\Gamma_n$ has at least $2$ distinct interior arcs incident to $a_0$. Indeed, $\alpha_0$ is incident to a triangle of $T$ whose two other edges must admit $a_0$ as a vertex. These edges are also both incident to the same vertex in the privileged boundary. Hence, they must be interior arcs of the triangulation.
\begin{figure}[b]
\begin{centering}
\includegraphics{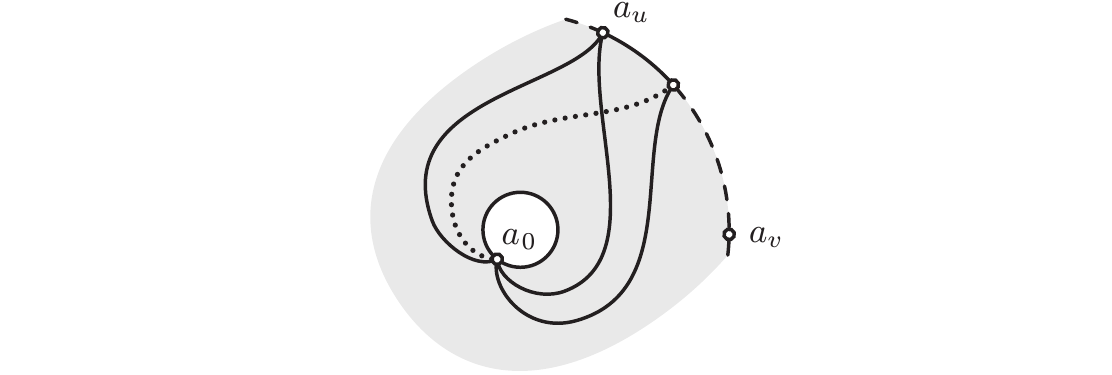}
\caption{The flip used in the proof of Theorem \ref{thm:gammaupper} to transform $U'$ into $V'$ inductively. The introduced edge is dotted.}\label{fig:Lemma1}
\end{centering}
\end{figure}

As such, $n-1$ flips suffice to reach a triangulation with all arcs incident to $a_0$ from either $U$ or $V$. Note that such a triangulation is uniquely determined by the privileged boundary vertex of the triangle incident to $\alpha_0$.

We now perform the above flips within $U$ and $V$ to obtain two triangulations $U'$ and $V'$. Denote by $a_u$ and $a_v$ the privileged boundary vertices of the triangle incident to $\alpha_0$ in respectively $U'$ and $V'$. At most, this necessitates $2n-2$ flips. 

Now to get from $U'$ to $V'$, we proceed as follows. Note that, thinking of the privileged boundary as a graph, the distance of $a_u$ and $a_v$ along this boundary is at most $n/2$. We can perform a flip in $U'$ to obtain a triangulation similar to $U'$, wherein the privileged boundary vertex of the triangle incident to $\alpha_0$ is closer to $a_v$ by $1$ along the privileged boundary (this is illustrated in Fig. \ref{fig:Lemma1}). As such, in at most $n/2$ flips we have transformed $U'$ into $V'$. The result follows.
\end{proof}

It turns out that this straightforward upper bound is (somewhat surprisingly) optimal as will be shown in the sequel. We generalize to an arbitrary number of boundary loops in the next subsection.

\subsection{Upper bounds for surfaces with multiple boundary loops}

The first case we treat is that of marked boundary loops.

\begin{theorem}\label{UBtheorem.47}
Let $\Sigma$ be a surface with $k\geq2$ marked boundary loops. Then there exists a constant $K_k$ which only depends on $k$ such that
$$
\diam(\MFS) \leq \left(4-\frac{2}{k}\right) \, n + K_k.
$$

\end{theorem}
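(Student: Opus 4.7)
The plan is to refine the proof of Theorem \ref{thm:basicupperbound}. In that argument, Lemma \ref{Hlemma.3.1} is applied one privileged boundary vertex at a time, creating ears in both $U$ and $V$ at a cost of at most $4$ flips per vertex. To obtain the improved constant $4-2/k$, I would process the privileged boundary vertices in blocks of $k$ consecutive vertices, and show that each such block can be handled in at most $4k-2$ flips summed over $U$ and $V$, saving $2$ flips per block compared with the naive bound of $4k$.

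First, I would redo the Euler characteristic computation from the proof of Theorem \ref{thm:basicupperbound} in the present setting: for a genus $0$ surface $\Sigma_n$ with $k$ marked boundary loops, any triangulation has exactly $n+4k-3$ interior arcs. Since each loop vertex has interior degree at least $2$ in any triangulation (the triangle incident to a boundary loop has two distinct interior edges at its loop vertex), the total interior degree of the privileged boundary vertices across $U$ and $V$ is at most $4n + O(k)$. Averaging over blocks of $k$ consecutive privileged boundary vertices then produces, for $n$ large enough, a block of such vertices whose combined interior degree in $U+V$ is at most $4k + O(k/n)$.

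The key step would be a block version of Lemma \ref{Hlemma.3.1}: given $k$ consecutive privileged boundary vertices $a_p, a_{p+1}, \ldots, a_{p+k-1}$ of suitably bounded combined interior degree in $U$ and $V$, there exist triangulations $\tilde{U}$ and $\tilde{V}$ having ears at each of these vertices with $d(U,\tilde{U}) + d(V,\tilde{V}) \leq 4k - 2$. The saving of $2$ flips over the naive estimate should come from the cascading effect of the deletion operation of Subsection \ref{Asubsection.2.2}: after creating an ear at $a_p$ and deleting this vertex, the interior arc $\beta_p$ bounding the ear becomes a boundary arc of $\Sigma_{n-1}$, so the interior degree of the neighbour $a_{p+1}$ drops by $1$ in the deleted triangulation. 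Over a block of $k$ consecutive deletions in each triangulation, this cumulative reduction should save at least one flip per triangulation, hence at least $2$ per block across $U$ and $V$.

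Iterating this block procedure $\lfloor (n-n_0)/k \rfloor$ times reduces the problem to a surface $\Sigma_{n_0}$ with $n_0 = O(k)$ privileged boundary vertices, whose modular flip-graph has diameter bounded by a constant $K_k$ depending only on $k$; combining this with the iterations and invoking Proposition \ref{prop:monotonicity} gives the desired inequality. The main obstacle is establishing the block lemma with the precise saving of $2$: one must ensure that the cascading reduction of interior degrees is genuinely realisable by a sequence of flips compatible with the proof strategy of Lemma \ref{Hlemma.3.1}, and that the flips performed to create each ear do not inadvertently increase the interior degree of vertices processed later in the block enough to cancel the saving.
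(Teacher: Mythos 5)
Your reduction to a ``block lemma'' is where the argument breaks down, and I do not believe that lemma is true in the form you need. The cascading effect you describe is real but it only breaks even: the arc $\beta$ bounding the ear at $a_p$ (running from $a_{p-1}$ to $a_{p+1}$) is typically an arc that was itself \emph{introduced} by the last ear-creating flip, so when the deletion of $a_p$ turns it into a boundary arc and lowers the interior degree of $a_{p+1}$ by one, this merely cancels the increase that the ear-creating flips caused; it does not push the cost below the sum of the original interior degrees. Concretely, take a block $a_1,\dots,a_k$ in which every vertex has interior degree exactly $2$ in $U$ and in $V$, with all incident interior arcs leaving the block (this is easy to arrange, and is compatible with the average combined degree being $4$). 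Each flip removes one arc and introduces one, so it lowers the combined degree of the block by at most $2$, and along the procedure of Lemma \ref{Hlemma.3.1} it lowers the degree of the vertex being processed by exactly $1$; earing and deleting the vertices in order then costs $2$ flips per vertex per triangulation, i.e.\ $4k$ in total, with no saving, and you give no alternative procedure that does better. There is also a structural objection: nothing in your mechanism uses the $k$ boundary loops --- the block size is set equal to $k$ only so that the arithmetic produces $4-2/k$. If a saving of $2$ per block of size $m$ were available on every filling surface, taking $m=2$ would give $\diam(\MFS)\leq 3n+K$ for all $\Sigma$, contradicting the conjectures at the end of the paper for surfaces with many marked loops.

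The paper obtains the $2/k$ improvement from an entirely different place, and the loops are essential to it. One first spends roughly $2n$ flips making every interior arc of $U$ and of $V$ incident to the vertex $a_0$ of a chosen boundary loop; the remaining topology is then concentrated in at most $k$ ``pods'' attached to privileged boundary vertices. By pigeonhole there is a gap of length at least $n/k$ on the privileged boundary containing no pod, and moving all pods to a common vertex while avoiding this gap costs at most $\left(2-\frac{2}{k}\right)n+O(k)$ further flips, after which the two triangulations differ only inside a subsurface of bounded complexity. The factor $2/k$ thus comes from the pigeonhole on the $k$ pods, which is exactly the dependence on $k$ that your argument is missing.
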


\begin{proof}
We begin by choosing a boundary loop $\alpha_0$ and its vertex which we denote $a_0$. Note that as before, any triangulation has at least two interior arcs incident to $a_0$.

Given two triangulations $U$ and $V$ in $\MFS$ we perform flips within both triangulations until all arcs are incident to $a_0$. This can be done with at most $2n + 8 k -10$ flips for the following reason. A straightforward Euler characteristic argument shows that any triangulation in $\MFS$ has exactly $n+4k-3$ interior arcs. As observed above, at least two of these are already incident to $a_0$, so each triangulation is at most $n+4k-5$ flips away from a triangulation with all arcs incident to $a_0$. We denote the resulting triangulations by $U'$ and $V'$.

Triangulations with the above property are by no means canonical but they do have a very nice structure. Visually, it's useful to think of the vertex $a_0$ as the center of the triangulation. Most arcs (at least when $n$ is considerably bigger than $k$) will be arcs going from a privileged boundary vertex $a_p$ to $a_0$, and will be the unique arc doing so. However, some of them will have a companion arc (or several) also incident to the same two vertices. For this to happen, as they are necessarily non-isotopic arcs, they must enclose some topology. If we consider two successive arcs like this (by successive we mean belonging to the same triangle), they must be boundary arcs of a triangle with a companion loop incident in $a_0$. We shall refer to subsurfaces bounded by such two successive arcs as a {\it pod} and its subsurface bounded by the companion loop as a {\it pea}. A pod is depicted in the right hand side of Fig. \ref{fig:Lemma2}, where the pea is hatched. 
\begin{figure}
\begin{centering}
\includegraphics{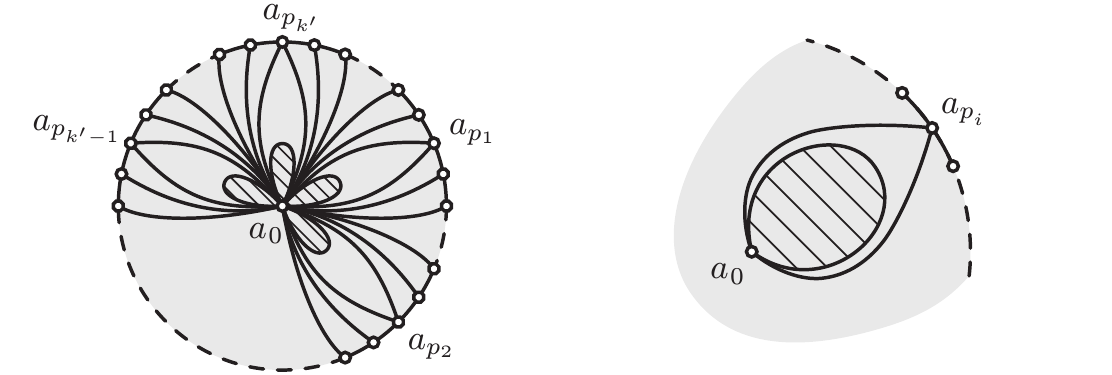}
\caption{Peas in pods.}\label{fig:Lemma2}
\end{centering}
\end{figure}

Observe that that any pea must contain at least one of the $k$ interior boundary loops but could possibly contain several. As such there are at most $k$ peas and as every pod is non-empty, at most $k$ pods. We denote the number of peas and pods by $k'$ and we denote the privileged boundary vertices they correspond to by $a_{p_1},\hdots,a_{p_{k'}}$ where the $p_i \in \Z_n$, $i=1,\hdots,k'$ are ordered along the privileged boundary (clockwise on the left of Fig. \ref{fig:Lemma2}). Note that it is possible that $a_{p_j}=a_{p_{j+1}}$.

Vertices $a_{p_1},\hdots,a_{p_{k'}}$ are separated along the privileged boundary by sequences of vertices (possibly none) which have single arcs to $a_0$ (see the left hand side of Fig. \ref{fig:Lemma2}). We call these sequences {\it gaps}. For both $U'$ and $V'$ we want to find the largest gap. As there are $n$ vertices on the boundary separated by at most $k'$ pods, there is always a gap of size at least $\frac{n}{k'}\geq \frac{n}{k}$, i.e., with the notation used for a generic such triangulation above, an $i_0$ with 
$$d_{\Z_n}(p_{i_0} ,p_{i_0 +1}) \geq \frac{n}{k}.$$
We now consider the largest gaps in both $U'$ and $V'$. The set of vertices not found in the gaps are both of cardinality at most $n - \frac{n}{k}$. We distinguish two cases.

{\underline{Case 1}:} Some vertex $a_{g}$ does not belong to either the gap of $U'$ or the gap of $V'$.

The strategy here is to flip $U'$ and $V'$ into triangulations with a single pod at $a_g$. They will thus coincide outside of the pod and it will suffice to flip inside the pod a number of times depending only on $k$ to relate the two triangulations. 

We begin by observing that a pod can be moved to neighboring vertex by a single flip unless another pod obstructs its passage (see the left hand side of Fig. \ref{fig:Lemma3}). 

For both triangulations we proceed in the same way. We ``condemn" the gap, and flip the pods until they reach $a_g$ without passing through the condemned gap as follows. We take one of the pods bounding the gap and flip it until it reaches another pod or vertex $a_g$. In the former case, the two pods are transformed into a single pod by the flip portrayed on the right of Fig. \ref{fig:Lemma3}. 
\begin{figure}
\begin{centering}
\includegraphics{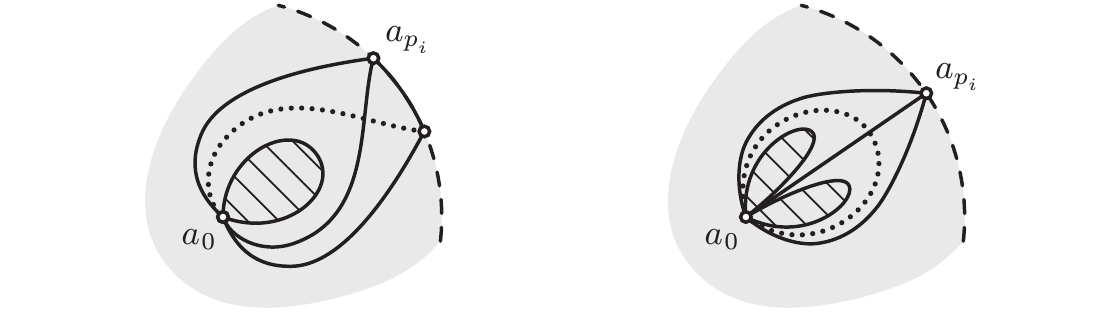}
\caption{A flip that moves a pod (left) and joins two pods (right). In each case, the introduced edge in dotted.}\label{fig:Lemma3}
\end{centering}
\end{figure}

We then continue to flip until reaching another pod (or $a_g$) etc. In any case, once vertex $a_g$ has been reached, the process stops. At this point there are no pods left between $a_g$ and the condemned gap on one side. We do the same on the other side. 

We now count at most how many flips were necessary. As there were originally at most $k$ pods, at most $k-1$ flips we necessary to join pods. All of the other flips have reduced by $1$ the distance between the pods bounding the condemned gap, thus there were at most $n-\frac{n}{k}$ such flips.

As we performed this on both triangulations, the total number of flips that have been carried out does not exceed
$$
(2-\frac{2}{k})\, n + 2 k -2
$$
If we denote $U''$ and $V''$ the resulting triangulations, we now have two triangulations that differ only on a single pea which contains all of the topology and where all arcs are incident to $a_0$. We now flip inside the pea. As a subsurface, it is homeomorphic to $\Sigma_1$, thus 
$$
d(U'',V'') \leq \diam(  \MF(\Sigma_1))
$$
and this diameter is equal to some constant $K'_k$ which only depends on $k$. Using these estimates and our original estimates on the distances to $U'$ and $V'$ we obtain
$$
d(U,V) \leq (2-\frac{2}{k})\, n  + K'_k + 2 k - 2 + 2n + 8k- 10
$$
thus setting $K_k:= {K'}_k + 10 k - 12$ we obtain
$$
d(U,V) \leq (4-\frac{2}{k})\, n  + K_k
$$
as desired. 

{\underline{Case 2}:} Each of the vertices of the privileged boundary belongs to the gap of $U'$ or to the gap of $V'$.

This is the easier case, as now all the privileged boundary vertices incident to pods of $U'$ lie in a sector disjoint from another section containing all the privileged boundary vertices incident to pods of $V'$. We condemn the two gaps and move the pods by flips as done above: choose a pod in $U'$ at the boundary of the gap and flip it towards the other boundary, in the direction that keeps it outside the gap. This proceeds until $U'$ is transformed into a triangulation with a single pod at the other boundary of the gap. Call $a_l$ the vertex in the privileged boundary that is incident to the remaining pod. We now flip $V'$ similarly but in the opposite direction (in order to keep the pods outside the condemned gap, one just need to start the flipping process from the appropriate boundary). We continue to flip the resulting triangulation until it has a single pod in $a_l$. We denote the resulting triangulations by $U''$ and $V''$. Note that, as above, there were at most $2k-2$ flips that served to join adjacent pods. All other flips brought the outermost pods one closer to $a_l$. Hence, there were at most $n-1$ such flips. Thus in total
$$
d(U,U'')+d(V,V'') \leq n - 1 +2k -2.
$$
Now $U''$ and $V''$ differ in a single pea, and thus as above satisfy
$$
d(U'',V'') \leq \diam(  \MF(\Sigma_1)).
$$
We can conclude that, taking the same constant $K_k$ as previously that
$$
d(U,V) \leq 3 n  + K_k \leq (4-\frac{2}{k}) n + K_k\mbox{.}
$$
Note that the second inequality holds because $k\geq 2$.
\end{proof}

Observe that this implies an upper bound on the order of $3n$ when $k=2$ that is, when $\Sigma=\Pi$. An adaptation of the above proof for unmarked boundary loops gives stronger upper bounds. In particular the following is true.

\begin{theorem}\label{UBtheorem.48}
Let $\Sigma$ be a disk with $k$ unmarked boundary loops. Then there exists a constant $K_k$ which only depends on $k$ such that
$$
\diam(\MFS) \leq \left(3-\frac{1}{2k}\right) \, n + K_k.
$$
\end{theorem}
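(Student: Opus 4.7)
The plan is to adapt the proof of Theorem~\ref{UBtheorem.47} to exploit the fact that the MCG can now permute the $k$ unmarked boundary loops among themselves, thereby enlarging the equivalence classes of triangulations in $\MFS$.

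First, I would pick a boundary loop $\alpha_0$ with vertex $a_0$ and flip both $U$ and $V$ until all interior arcs are incident to $a_0$, producing triangulations $U'$ and $V'$. Exactly as in the marked case, the arc complexity of $\Sigma_n$ is $n+4k-3$, at least two such arcs are already incident to $a_0$ in any triangulation (the two non-boundary edges of the triangle incident to $\alpha_0$), so this first step costs at most $2(n+4k-5)$ flips in total.

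Next I would identify the pods and peas of $U'$ and $V'$ as in the marked proof. The crucial new feature is that, since the $k$ unmarked loops are freely permuted by the MCG, two pod-pattern triangulations are MCG-equivalent as soon as the sequence of pod positions along the privileged boundary and the corresponding pea sizes (numbers of loops in each pea) coincide; which specific loop sits in which pea is irrelevant. This relaxes the matching requirement between $U'$ and $V'$ compared to Theorem~\ref{UBtheorem.47}.

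The core step is to transform $U'$ and $V'$ to a common pod pattern by moving pods. Following the dichotomy of the marked proof (Case~1: some vertex $a_g$ lies in neither largest gap; Case~2: every vertex lies in one of the two largest gaps), I would carry out the analogous pod-movement procedure, with the key modification that the pods are \emph{not} collapsed to a single target vertex, but rather split between two destination vertices. Thanks to the MCG flexibility, this split configuration is still MCG-equivalent for both triangulations once the final pea sizes match. The saving arises because one need not pay for joining pods across the largest remaining inter-pod gap within each triangulation's non-condemned region: an averaging argument using the second-largest gaps reduces the total pod-movement cost by roughly $n/(2k)$ compared to the marked bound. A final $O(k)$-cost cleanup equalises any residual pea sizes and reconciles differences inside the peas (each being bounded in a smaller modular flip-graph). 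Summing the contributions,
$$
d(U,V) \leq 2(n+4k-5) + \left(1 - \frac{1}{2k}\right)n + O(k) = \left(3 - \frac{1}{2k}\right)n + K_k.
$$

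The hard part will be the coordination in the pod-movement step: one must choose the pair of destination vertices so that $U'$ and $V'$ can both be driven to the same final pod pattern (identical positions \emph{and} identical pea sizes at each position) while simultaneously realising the splitting saving. This will require refining the Case~1/Case~2 analysis of Theorem~\ref{UBtheorem.47} to select the second destination vertex from the second-largest gap region of each triangulation, and tracking how merges and splits of peas interact with the MCG equivalence.
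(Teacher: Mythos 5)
Your overall strategy is in the right spirit -- center both triangulations, exploit the fact that unmarked loops need only be matched by pea multiplicity, and synchronize the pod patterns -- and your first step (centering at cost $2n+O(k)$) agrees with the paper's. The paper differs in a preliminary detail: it first makes every loop ``hang off'' a privileged boundary vertex at $O(k)$ cost, then chooses the center $a_0$ to be a \emph{privileged boundary} vertex inside the largest of the gaps determined by the at most $2k$ hanging positions of $U'$ and $V'$ combined, whereas you center at a boundary loop vertex. That choice is not by itself fatal, since the at most $2k$ combined pod positions still leave a common gap of length at least $n/(2k)$ to condemn.

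The genuine gap is in your pod-movement step, which is both vague and internally inconsistent. You describe the saving as coming from splitting the pods ``between two destination vertices'' and from ``an averaging argument using the second-largest gaps,'' which you quantify as reducing the marked-case cost by ``roughly $n/(2k)$.'' The marked-case pod-movement cost is $\bigl(2-\frac{2}{k}\bigr)n$, so your described mechanism would yield roughly $\bigl(2-\frac{5}{2k}\bigr)n$ for this step and a total of about $\bigl(4-\frac{5}{2k}\bigr)n$ -- nowhere near the claimed bound. Your final displayed arithmetic then silently charges only $\bigl(1-\frac{1}{2k}\bigr)n$ for pod movement, which is the number you need but which the described mechanism does not produce. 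The idea actually required (and used in the paper) is a one-pass clockwise synchronization: process the pod positions of $U'$ and $V'$ in cyclic order starting from the condemned gap; at each position deposit pods of matching multiplicity in both triangulations (splitting a larger pod inside its pea at $O(1)$ amortized cost per split, at most $2k$ splits in total) and push only the excess forward, so that at any moment a pod is advancing in exactly \emph{one} of the two triangulations. This bounds the \emph{total} number of pod-moving flips across both triangulations by the circumference minus the gap, i.e.\ $n-\frac{n}{2k}$ -- a saving of essentially a factor of two over the marked case, not an additive $\frac{n}{2k}$. Without this interleaving argument (or an equivalent one), the $\bigl(3-\frac{1}{2k}\bigr)n$ bound does not follow from your outline.
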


\begin{proof}
Let $U$ and $V$ be triangulations in $\MFS$. We begin by observing the following: every boundary loop is {\it close} to some vertex on the privileged boundary.

More precisely, consider the dual of $U$, i.e. the graph $D$ whose vertices are the triangles of $U$ and whose edges connect two triangles that share an edge. Observe that $D$ is connected. Let $t$ be the triangle of $U$ incident to some boundary loop. Consider a triangle $t'$ of $U$ incident to the privileged boundary that is closest to $t$ in $D$. The distance in $D$ between $t$ and $t'$ only depends on $k$. Indeed, consider a geodesic between $t$ and $t'$ in $D$. The only triangle incident to the privileged boundary along this geodesic is $t'$. Hence the length of this geodesic cannot depend on $n$, but only on $k$. The vertex of $t'$ on the privileged boundary is the one we call \emph{close} to the boundary loop. Now observe that flipping the arcs of $U$ dual to the edges of our geodesic from $t'$ to $t$ will introduce a triangle incident to both the boundary loop and the privileged boundary vertex it is close to. We then say that the boundary loop is {\it hanging off} this vertex.

We carry out the above sequence of flips for every boundary loop. Note that these flips never remove an arc incident to the privileged boundary. Hence, once a boundary loop is hanging off a privileged boundary vertex, it will be left so by the later flips. The number of flips needed to transform both $U$ and $V$ as described above does not depend on $n$, but only on $k$. We denote the resulting triangulations by $U'$ and $V'$.

By construction, all the boundary loops of $U'$ and $V'$ hang off of privileged boundary vertices, either by itself or in a bunch as depicted in Fig. \ref{fig:ThmUM1}. Observe that if two boundary curves hang off the same vertex, then they are separated by at least one other triangle.
\begin{figure}
\begin{centering}
\includegraphics{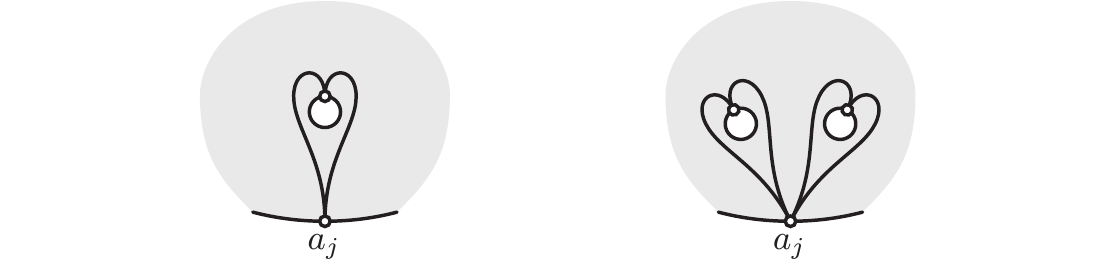}
\caption{Boundary curves ``hanging off" privileged boundary vertices}
\label{fig:ThmUM1}
\end{centering} 
\end{figure}
\begin{figure}
\begin{centering}
\includegraphics{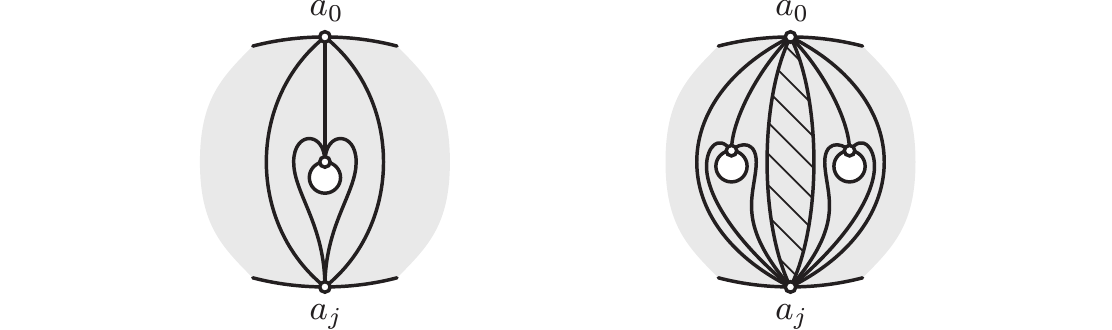}
\caption{A pod with a unique boundary loop (left), and a pod with several boundary loops (right).}
\label{fig:ThmUM2}
\end{centering}
\end{figure}

For a moment we forget all of the triangles of $U'$ and $V'$ that are not incident to a boundary loop. We consider the collection of privileged boundary vertices that have boundary loops hanging off of them in either $U'$ or $V'$. There are at most $2k$ such vertices and, as in the previous proof we consider the gaps of successive privileged boundary vertices without anything hanging off of them. We now consider the largest gap, whose size is at least $\frac{n}{2k}$.

We choose one of the privileged boundary vertices {\it contained} in the gap and denote it $a_0$. We carry out flips within both $U'$ and $V'$ to increase the interior degree of $a_0$ but (and this is important) without flipping the edges of any triangle incident to a boundary loop. Once this is done, all other arcs are incident in $a_0$. The vertices in the boundary loops are incident to a unique arc which joins them to $a_0$ as shown in Fig. \ref{fig:ThmUM2}.

The two triangulations look very similar with the exception of the placement of the boundary loops. They are all found in sectors (which we call {\it pods}) bounded by two arcs between $a_0$ and some other privileged boundary vertex $a_j$, possibly by themselves, possibly with other boundary loops (see Fig. \ref{fig:ThmUM2}). As in the previous theorem, we want to put these boundary loops in {\it peas} so that they are easy to move, but this time we use the privileged boundary vertex $a_0$ as a base for all the peas. 

To do this, we perform flips inside each pod so that all the boundary loops inside a given pod become enclosed in a single pea attached to $a_0$. This may take a certain number of flips but an upper bound on how many is given by 
$$
\diam( \MF({\Sigma'}_2))\mbox{,}
$$
where $\Sigma'$ is the surface inside the pod. As $\Sigma'$ has at most $k$ interior boundary curves, this is bounded by some function of $k$. Note that each boundary curve is inside some pea belonging to a pod attached to both $a_0$ and some other privileged boundary vertex $a_j$. This $a_j$ is of course the original vertex that the boundary curve was {\it close} to. 

We can now begin to move the pods around. The idea is to move the pods clockwise around $a_0$ using the flip depicted on the left of Fig. \ref{fig:ThmUM4}.


\begin{figure}
\begin{centering}
\includegraphics{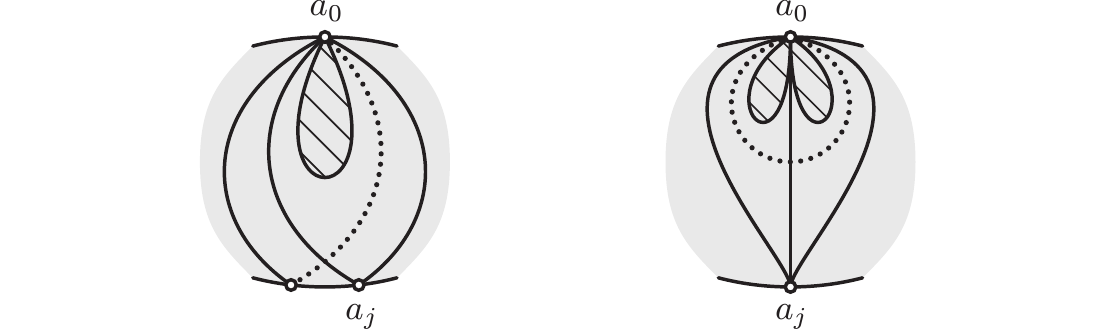}
\caption{A flip that moves a pod by one vertex clockwise around $a_0$ (left) and a flip that joins two pods (right). In each case, the introduced edge is dotted.}
\label{fig:ThmUM4}
\end{centering}
\end{figure}

We will refer to the number of boundary loops in a pea or in a pod as the pea or the pod's {\it multiplicity}. We begin as follows: we consider the first pod clockwise around $a_0$ in either triangulations. If both triangulations have such a pod we choose the one with the largest multiplicity. If they both have a pod of the same multiplicity we leave them as they are and look for the next pod clockwise in either triangulation. The selected pod is incident to $a_0$ and to another privileged boundary vertex $a_j$.

If one of the triangulations has no pod at incident to $a_j$, we move the pod clockwise in the one that does to the next vertex incident to a pod on either triangulation. As in the previous theorem, moving a pod by one vertex requires one flip as shown on the left of Fig. \ref{fig:ThmUM4}. 

If however both triangulations have pods with different multiplicities incident to $a_j$, we first perform flips inside the one with the larger multiplicity to split it into two pods, each containing a pea attached to $a_0$. We make the first pod (in the direction of our orientation) with the same multiplicity as the pod of the other triangulation and the second with whatever multiplicity comes from the leftover boundary loops. Again, this splitting operation requires a number of flips but no more than 
$$
\diam( \MF({\Sigma'}_2))\mbox{,}
$$
where $\Sigma'$ is the surface inside the pod, as above. We then move this second pod by flips to the next vertex clockwise with a pod on either triangulation. Whenever the moving pod encounters another pod in its own triangulation, we perform a single flip to join them as shown on the right of Fig. \ref{fig:ThmUM4}, and we iterate the process until we reach the last pod clockwise around $a_0$.

The two resulting triangulations have pods of the same multiplicity incident to the same privileged boundary vertices. More precisely, these triangulations only possibly differ in the way the peas are triangulated. We therefore finally perform flips inside the peas in order to make the two triangulations coincide. Note that the number of these flips does not depend on $n$ but only on $k$.

Let us now take a look at how many flips we have performed. 

We began by tweaking both triangulations so that all boundary loops hung off privileged boundary vertices. This required a number of flips that does not depend on $n$, but only on $k$, which we call $K'_k$. We then increased the interior degree of $a_0$. By an Euler characteristic argument, this required at most $2n+4k-6$ flips. Moving pods from one end of the gap to the other required at most $n$ flips to which the size of the gap must be subtracted, thus at most $n - \frac{n}{2k}$ flips. 

In several places we had to transform two triangulations in $\MF({\Sigma'}_2)$ into one another for some subsurface $\Sigma'$ of $\Sigma$. The number of flips needed to perform every such transformation in any possible subsurface $\Sigma'$ is bounded above by a number $K''_k$ that does not depend on $n$. We had to do these transformations at most $k$ times to attach the peas to $a_0$, and once every time a pod had to be split. The splitting operation was performed at most $2k$ times because the number of pods in the two triangulations is bounded above by $2k$. Hence the total number of flips performed to modify triangulations in $\MF({\Sigma'}_2)$ is at most $3kK''_k$.

Likewise, we may have had to join pods together requiring in total at most $2k$ flips. The final flipping inside the peas was bounded above by a number $K'''_k$ that does not depend on $n$. 

Setting $K_k:= K'_k+3kK''_k+K'''_k+6k-6$, we obtain an upper bound of
$$
\left( 3 - \frac{1}{2k}\right) n + K_k\mbox{,}
$$
on the diameter of $\MF(\Sigma_n)$ as desired.
\end{proof}

\subsection{A few other cases}

The proof of Theorem \ref{UBtheorem.47} still works when some of the boundary loops are replaced by interior points. The only difference is that some of the peas will enclose interior points instead of boundary loops. Hence:

\begin{theorem}
Let $\Sigma$ be a surface with $l$ marked boundary loops and $k$ marked interior vertices. If $k+l$ is not less than $2$, then there exists a constant $K_{k+l}$ which only depends on $k+l$ such that
$$
\diam(\MFS) \leq \left(4-\frac{2}{k+l}\right) \, n + K_{k+l}\mbox{.}
$$
\end{theorem}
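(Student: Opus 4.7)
The plan is to reproduce the proof of Theorem \ref{UBtheorem.47} essentially verbatim, with ``boundary loop'' replaced everywhere by ``marked feature'', meaning either a boundary loop or a marked interior vertex. Since $k+l\geq 2$, I can select a base vertex $a_0$ off the privileged boundary: the vertex of a boundary loop when $l\geq 1$, or a marked interior point when $l=0$. In either case any triangulation has at least two distinct interior arcs incident to $a_0$ --- on a boundary loop, $a_0$ is the sole vertex, so the two edges of the triangle adjacent to the loop arc besides that arc itself are forced to be interior; at an interior marked point, the surrounding fan of triangles already provides at least three incident interior arcs.

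By an Euler characteristic count, every triangulation has $n+E_\Sigma$ interior arcs for a constant $E_\Sigma$ depending only on the topology of $\Sigma$. So each of $U$ and $V$ can be flipped, at a cost of at most $n+E_\Sigma-2$ flips apiece, into a triangulation $U'$ or $V'$ in which every interior arc is incident to $a_0$. I then define pods and peas exactly as in the proof of Theorem \ref{UBtheorem.47}. The only substantive change is in what a pea may enclose: its bounding loop, being a non-contractible interior arc based at $a_0$, must cut off some non-trivial topology, and here that topology is provided by either a boundary loop (as before) or a marked interior vertex --- both are obstructions to contracting a loop through them. Since there are $k+l$ marked features in total, each of $U'$ and $V'$ has at most $k+l$ pods and therefore contains a gap on the privileged boundary of size at least $n/(k+l)$.

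From this point the remainder of the argument of Theorem \ref{UBtheorem.47} goes through without change: the two-case analysis (whether some privileged boundary vertex avoids both gaps simultaneously or not), the single-flip operations that move and join pods, the splitting of pods at a cost bounded in terms of $k+l$, and the final reconciliation inside a fixed subsurface whose modular flip-graph has diameter bounded by a constant depending only on $k+l$. Summing the contributions in the same way yields $\diam(\MFS)\leq (4-2/(k+l))\,n+K_{k+l}$. I expect no real obstacle: the only thing to verify at each stage is that the combinatorics of pea-bounding, pod-moving, pod-joining and pod-splitting are insensitive to whether the feature enclosed in a pea is a boundary loop or a marked interior vertex, and this is immediate because every step is carried out in a subsurface cut off by a loop at $a_0$ and depends only on the topology of that subsurface, not on the specific nature of the obstructions inside it.
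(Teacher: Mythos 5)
Your proposal is correct and matches the paper's argument: the paper itself proves this theorem by observing that the proof of Theorem \ref{UBtheorem.47} carries over verbatim once one allows peas to enclose marked interior vertices as well as boundary loops, which is exactly your "marked feature" substitution. The only caveat worth noting is that your claim of "at least three" interior arcs at an interior marked point is not needed (and not quite sharp in these generalized triangulations); all the argument requires is that the number of flips to make every arc incident to $a_0$ is $n+O(1)$, which your Euler characteristic count already gives.
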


Adapting the proof of Theorem \ref{UBtheorem.48} to surfaces with interior points and boundary loops is not immediate. Indeed, a point and a boundary loop cannot be exchanged. However, if all the boundary loops are replaced by interior vertices, a straightforward adaptation of this proof will work. As above, the only difference is that peas will enclose vertices instead of boundary loops so we only give the main steps.

\begin{theorem}
Let $\Sigma$ be a surface with $k$ unmarked interior vertices. If $k$ is not less than $2$, then there exists a constant $K_k$ which only depends on $k$ such that
$$
\diam(\MFS) \leq \left(3-\frac{1}{2(k-1)}\right) \, n + K_{k}\mbox{.}
$$
\end{theorem}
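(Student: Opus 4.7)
My plan is to imitate the proof of Theorem~\ref{UBtheorem.48}, exploiting one key observation: an unmarked interior vertex, unlike an unmarked boundary loop, can itself be chosen as the hub $a_0$. This promotion saves exactly one pod per triangulation, which is precisely why the denominator becomes $2(k-1)$ rather than $2k$.

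Given $U,V\in\MFS$, I would first carry out the opening step of the proof of Theorem~\ref{UBtheorem.48}, with ``boundary loop'' replaced throughout by ``interior vertex'': a bounded (only $k$-dependent) number of flips in each triangulation brings it into a form where every interior vertex is joined to some privileged boundary vertex by an arc. I would then designate one of the $k$ interior vertices as the hub $a_0$, and perform flips that increase the interior degree of $a_0$ without disturbing the triangles incident to the remaining $k-1$ interior vertices. An Euler characteristic count (giving $n+3k-3$ interior arcs) shows this costs at most $2n+O_k(1)$ flips in total, and afterwards every interior arc of each triangulation is either incident to $a_0$ or sits inside a pea enclosing one or more of the $k-1$ non-hub interior vertices. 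Each such pea is contained in a pod whose two bounding arcs run from $a_0$ to some privileged boundary vertex.

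Each of the two resulting triangulations contributes at most $k-1$ privileged boundary vertices with a pod attached, so there are at most $2(k-1)$ such special vertices altogether, and therefore some gap of at least $\lceil n/(2(k-1))\rceil$ consecutive privileged boundary vertices is pod-free. I would then condemn this gap and repeat, word for word, the pod-moving, pod-splitting, pod-joining and final pea-triangulation steps of Theorem~\ref{UBtheorem.48}, always moving pods clockwise around $a_0$ while staying outside the condemned gap. Adding the flip counts gives at most $n-n/(2(k-1))$ pod-moving flips together with an $O_k(1)$ number of splitting, joining and pea-finishing flips, which combined with the earlier bounds yields $d(U,V)\leq\bigl(3-\tfrac{1}{2(k-1)}\bigr)n+K_k$.

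The step I expect to be the main obstacle is verifying that the local moves around $a_0$ (the pod-moving and pod-joining flips of Fig.~\ref{fig:ThmUM4}, as well as the flips used to split a pod into two) still make sense when $a_0$ is an interior vertex rather than a privileged boundary vertex. This is essentially a local check: each of those moves involves only the arcs incident to $a_0$ bounding a pod and is described combinatorially in terms of those arcs, so whether $a_0$ lies on the boundary or in the interior of $\Sigma$ should be immaterial. Once this point is confirmed, the argument carries over from the proof of Theorem~\ref{UBtheorem.48} essentially verbatim.
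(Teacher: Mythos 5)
Your proposal is correct and follows essentially the same route as the paper: choose one interior vertex as the hub $a_0$, spend $2n+O_k(1)$ flips making the other arcs incident to it, observe that only the remaining $k-1$ interior vertices per triangulation generate pods so that some gap of length at least $n/(2k-2)$ is pod-free, and then rerun the pod-moving argument of the unmarked-boundary-loop theorem. The paper likewise leaves the local verification of the pod moves around an interior hub to the reader, so your flagged "main obstacle" is exactly the point the authors also treat as routine.
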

\begin{proof}
Given any two triangulations $U$ and $V$, we begin by choosing any interior vertex and perform flips to increase its incidence in both triangulations. This requires $2n$ flips in total plus a constant which only depends on $k$. The resulting triangulations now have peas in pods where the peas have the form of a loop surrounding a single arc between two interior vertices. 

As in the proof of Theorem \ref{UBtheorem.48}, we consider the largest gap between two pods and move them around in an almost identical fashion. The gap is of size at least $n/(2k-2)$ as we have already used one of the interior vertices as the ``center" of the triangulation. The other details are identical to Theorem \ref{UBtheorem.48} and we leave them to the dedicated reader.
\end{proof}

\section{Lower bounds for $\Gamma$}\label{Asection.4}

In this section, we prove the following lower bound on the diameter of $\mathcal{MF}(\Gamma_n)$:
\begin{equation}\label{Aequation.4.0}
\diam(\mathcal{MF}(\Gamma_n))\geq \lfloor{ \frac{5}{2} n }\rfloor-2.
\end{equation}

This will be done exhibiting two triangulations $A_n^-$ and $A_n^+$ in $\MF(\Gamma_n)$ with the distance the right-hand side of (\ref{Aequation.4.0}). These triangulations are built by modifying the triangulation $Z_n$ of $\Delta_n$ depicted in Fig. \ref{Afigure.4.1}, where $\Delta_n$ is a disc with $n$ marked vertices on the boundary.

The interior arcs of $Z_n$ form a zigzag, i.e. a simple path that alternates between left and right turns. This path starts at vertex $a_n$, ends at vertex $a_{n/2}$ when $n$ is even, and at vertex $a_{\lceil{n/2\rceil+1}}$ when $n$ is odd. When $n$ is greater than $3$, triangulation $Z_n$ has an ear in $a_1$ and another ear in $a_{\lfloor{n/2}\rfloor+1}$. When $n$ is equal to $3$, this triangulation is made up of a single triangle which is an ear in all three vertices. Observe that $Z_n$ cannot be defined when $n$ is less than $3$.

Assume that $n\geq3$.
\begin{figure}[b]
\begin{centering}
\includegraphics{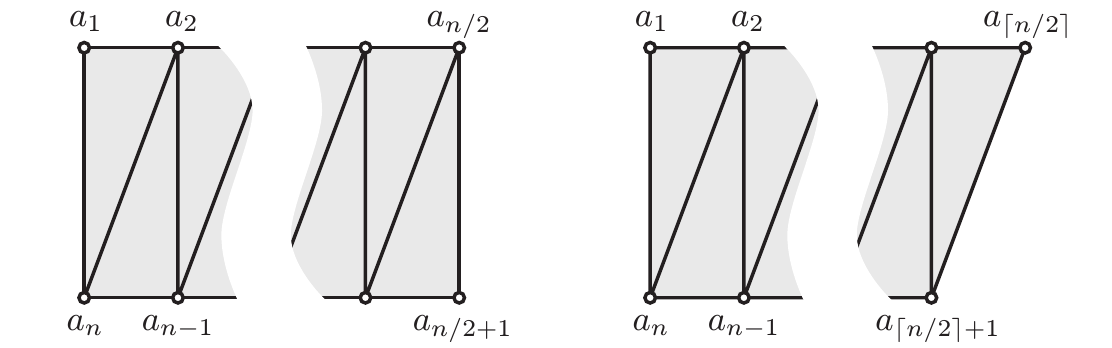}
\caption{The triangulation $Z_n$ of $\Delta_n$ depicted when $n$ is even (left) and odd (right).}\label{Afigure.4.1}
\end{centering}
\end{figure}
A triangulation $A_n^-$ of $\Gamma_n$ can be built by ``piercing the ear" of $Z_n$ in $a_1$: formally, we place a boundary loop $\alpha_0$ with a vertex $a_0$ inside the ear and re-triangulate the pierced ear as shown on the top of Fig. \ref{Afigure.4.2}. Another triangulation $A_n^+$ of $\Gamma_n$ can be built by piercing  the ear of $Z_n$ in $a_{\lfloor{n/2}\rfloor+1}$, by placing vertex $a_0$ on the boundary of the resulting hole, and by re-triangulating the pierced ear as shown in the bottom of Fig. \ref{Afigure.4.2}.

In the following triangulations $A_n^-$ and $A_n^+$ are understood as elements of $\MF(\Gamma_n)$ that is, up to homeomorphism.
\begin{figure}
\begin{centering}
\includegraphics{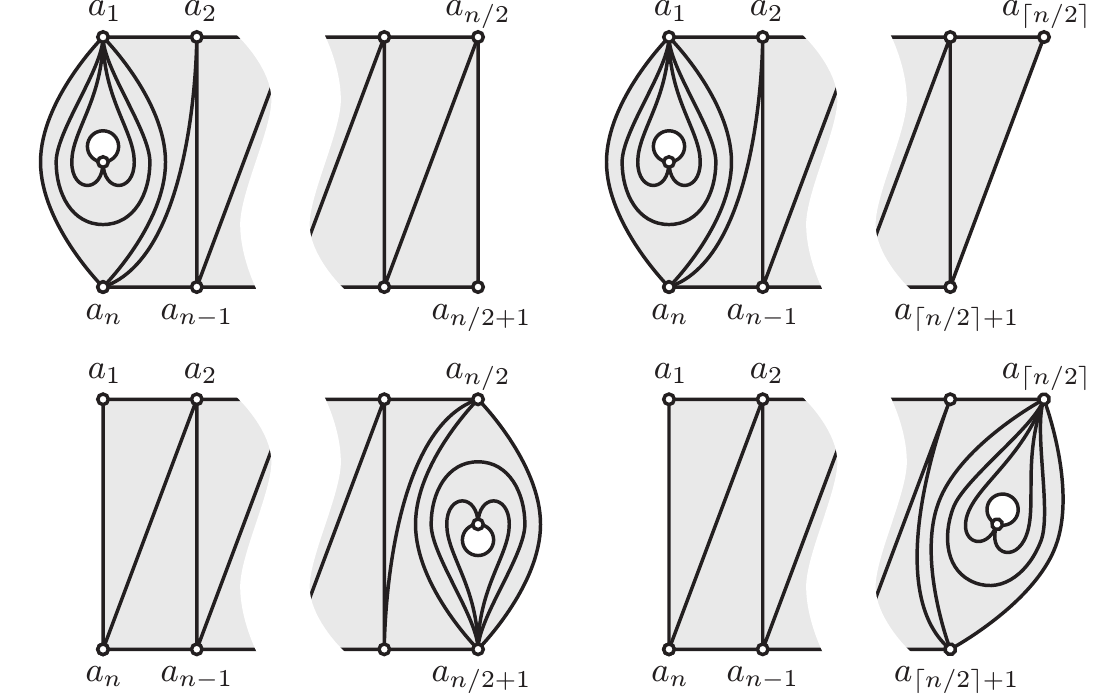}
\caption{The triangulations $A_n^-$ (top row) and $A_n^+$ (bottom row) of $\Gamma_n$ depicted when $n$ is even (left) and odd (right). For simplicity, vertex $a_{0}$ is unlabeled here.}\label{Afigure.4.2}
\end{centering}
\end{figure}

We will also define them when $1\leq n\leq 2$. Triangulations $A_2^-$ and $A_2^+$ are the triangulations in $\MF(\Gamma_2)$ that contain a loop arc at respectively vertex $a_1$ and vertex $a_2$, as shown on Fig. \ref{Afigure.2.1}. Triangulations $A_1^-$ and $A_1^+$ will both be equal to the unique triangulation in $\MF(\Gamma_1)$, also shown on Fig. \ref{Afigure.2.1}.

One of the main steps in our estimates will be to show, for every integer $n$ greater than $2$, the following inequality:
\begin{equation}\label{Aequation.4.1}
d(A_n^-,A_n^+)\geq\min(\{d(A_{n-1}^-,A_{n-1}^+)+3,d(A_{n-2}^-,A_{n-2}^+)+5\})\mbox{.}
\end{equation}

This inequality will be obtained using well chosen vertex deletions or sequences of them. For instance, for $n\geq2$ , observe that deleting vertex $a_n$ from both $A_n^-$ and $A_n^+$ results in triangulations isomorphic to $A_{n-1}^-$ and $A_{n-1}^+$. More precisely, once the vertex has been deleted, the vertices need be relabeled in order to obtain $A_{n-1}^-$ and $A_{n-1}^+$. The natural way to do this is to shift the labels of all subsequent vertices to the deleted vertex as:
$$
a_i \to a_{i-1}\mbox{.}
$$

This relabeling provides a map onto the triangulations of $\Gamma_{n-1}$. For future reference we call any such map a {\it vertex relabeling}. We can now precisely state the observation we need: the triangulations $A_{n}^-\contract{n}$, resp. $A_{n}^+\contract{n}$ are isomorphic to $A_{n-1}^-$, resp. $A_{n-1}^+$ via the same vertex relabeling. This can be checked using Fig. \ref{Afigure.2.1} when $2\leq{n}\leq4$ and Fig. \ref{Afigure.4.2} when $n\geq3$.

According to Theorem \ref{Atheorem.2.3}, it follows from this observation that if there exists a geodesic between $A_n^-$ and $A_n^+$ with at least $3$ flips incident to $\alpha_n$, then
\begin{equation}\label{Aequation.4.2}
d(A_n^-,A_n^+)\geq{d(A_{n-1}^-,A_{n-1}^+)+3}\mbox{,}
\end{equation}
and inequality (\ref{Aequation.4.1}) holds in this case. Now assume that $n\geq3$ and observe that for any integer $i$ so that $1\leq{i}<n$ and any $j\in\{n-i,n-i+1\}$, deleting vertices $a_i$ and $a_j$ from $A_n^-$ and from $A_n^+$ results in triangulations of $\Gamma_n$ isomorphic to $A_{n-2}^-$ and $A_{n-2}^+$ respectively. The isomorphism between these triangulations comes from the same vertex relabeling as above. Hence, if there exists a geodesic between $A_n^-$ and $A_n^+$ with at least $3$ flips incident to $\alpha_i$, and a geodesic between $A_n^-\contract{i}$ and $A_n^+\contract{i}$ with at least $2$ flips incident to $\alpha_j$, then invoking Theorem \ref{Atheorem.2.3} twice yields
\begin{equation}\label{Aequation.4.3}
d(A_n^-,A_n^+)\geq{d(A_{n-2}^-,A_{n-2}^+)+5}\mbox{,}
\end{equation}
and inequality (\ref{Aequation.4.1}) also holds in this case. Observe that (\ref{Aequation.4.2}) and (\ref{Aequation.4.3}) follow from the existence of particular geodesic paths. The rest of the section is devoted to proving the existence of geodesic paths that imply at least one of these inequalities.

Observe that $\alpha_n$ is not incident to the same triangle in $A_n^-$ and in $A_n^+$. Therefore, at least one flip is incident to $\alpha_n$ along any geodesic from $A_n^-$ to $A_n^+$. The proof will consist in studying these geodesics depending on the arc introduced by their first flip incident to $\alpha_n$, which is the purpose of the next three lemmas.

\begin{lemma}\label{Alemma.4.1}
Let $n$ be an integer greater than $2$. Consider a geodesic from $A_n^-$ to $A_n^+$ whose first flip incident to arc $\alpha_n$ introduces an arc with vertices $a_0$ and $a_n$. If $\alpha_n$ is incident to at most $2$ flips along this geodesic then $\alpha_1$ is incident to at least $3$ flips along it.
\end{lemma}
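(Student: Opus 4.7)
The plan is to first pin down the number of flips incident to $\alpha_n$ as exactly two, and then to deduce three flips incident to $\alpha_1$. Since $A_n^+$ preserves the ear of $Z_n$ at $a_1$ (only the ear at $a_{\lfloor n/2\rfloor+1}$ is pierced), its triangle incident to $\alpha_n$ is $(a_n,a_1,a_2)$ and contains no vertex equal to $a_0$. The first flip incident to $\alpha_n$ introduces an arc from $a_0$ to $a_n$, so immediately after this flip the triangle incident to $\alpha_n$ contains $a_0$ as a vertex. A second flip incident to $\alpha_n$ is then required to reach the $a_0$-free triangle of $A_n^+$, and combined with the hypothesis of at most two such flips this gives exactly two.

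For the flips incident to $\alpha_1$, the idea is to trace the triangle incident to $\alpha_1$ along the geodesic in tandem with the two flips incident to $\alpha_n$ identified above. The triangle of $A_n^-$ incident to $\alpha_1$ contains $a_0$ (inherited from the re-triangulation of the pierced ear at $a_1$), whereas the triangle of $A_n^+$ incident to $\alpha_1$ is the $a_0$-free ear $(a_n,a_1,a_2)$. A direct flippability analysis of the pierced-ear configuration of $A_n^-$ shows that neither interior arc of the triangle incident to $\alpha_n$ can be flipped directly (the other diagonal of the relevant quadrilateral either duplicates an existing arc of $A_n^-$ or coincides with a boundary arc), so the arcs at $a_1$ must be reconfigured before the first flip incident to $\alpha_n$ can take place, compelling a first flip incident to $\alpha_1$. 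A symmetric argument forces a second flip incident to $\alpha_1$ to prepare, or to absorb, the second flip incident to $\alpha_n$ that is needed to eliminate the newly introduced $a_0 a_n$ arc. Finally, after both flips incident to $\alpha_n$ have been executed, the triangle incident to $\alpha_1$ still contains $a_0$ on the $\alpha_1$-side of the original pierced ear, and a third flip incident to $\alpha_1$ is required to reach $(a_n,a_1,a_2)$.

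The main obstacle I foresee is the case analysis. Because $\Gamma_n$ has a boundary loop, there are infinitely many isotopy classes of arcs from $a_0$ to any boundary vertex, distinguished by how they wind around $\alpha_0$, and the introduced arc may sit in a class different from the ones appearing in $A_n^-$. Each admissible choice for the first flip incident to $\alpha_n$, together with each admissible choice for the second, has to be examined individually, and one must check in every sub-case that the corresponding reconfiguration at $a_1$ genuinely demands a fresh flip incident to $\alpha_1$. Lemma \ref{Alemma.2.75}, while not applicable to the pair $(A_n^-,A_n^+)$ directly (the triangles of $A_n^-$ incident to $\alpha_n$ and $\alpha_1$ share an edge through $a_0$), may be invocable on intermediate triangulations obtained after one of the two flips incident to $\alpha_n$, providing a clean way to certify the required flips incident to $\alpha_1$ in some of the sub-cases.
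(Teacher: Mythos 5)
Your opening step (exactly two flips incident to $\alpha_n$: one is hypothesised, and a second is forced because the triangle of $A_n^+$ incident to $\alpha_n$ does not admit $a_0$ as a vertex) is correct and matches the paper. The gap is in the accounting of the three flips incident to $\alpha_1$, where two of your three claimed flips rest on false premises. First, the flip introducing the arc with vertices $a_0$ and $a_n$ \emph{can} be performed directly in $A_n^-$: the loop arc at $a_1$ that encloses $\alpha_0$ is flippable, its flip quadrilateral being bounded by $\alpha_n$, by the arc $\beta$ with vertices $a_1$ and $a_n$, and by the two arcs joining $a_0$ to $a_1$, and the new diagonal is precisely an arc from $a_0$ to $a_n$. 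Moreover this flip is not incident to $\alpha_1$ (deleting $a_1$ does not identify the triangulations before and after it). So no flip incident to $\alpha_1$ need precede the first flip incident to $\alpha_n$; note also that ``reconfiguring arcs at $a_1$'' is not the same as being incident to $\alpha_1$, which is defined via the vertex-deletion operation. Second, your third flip does not exist either: the second flip incident to $\alpha_n$ is forced to introduce the ear $(a_n,a_1,a_2)$ of $A_n^+$, and that ear \emph{is} the triangle of $A_n^+$ incident to $\alpha_1$, so after this flip nothing further is required at $\alpha_1$. What your direct bookkeeping can actually certify is only two flips incident to $\alpha_1$: the second $\alpha_n$-flip (which is simultaneously incident to $\alpha_1$), and at least one earlier flip, since the triangle incident to $\alpha_1$ must first change from its configuration in $A_n^-$ to the configuration needed just before the ear is created.

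The third flip cannot be exhibited directly; the paper obtains it by contradiction, and this is the idea your proposal is missing. Assume only two flips are incident to $\alpha_1$. Then the earlier of them is forced into a unique form (it must replace the triangle of $A_n^-$ incident to $\alpha_1$ by the triangle incident to $\alpha_1$ just before the ear-creating flip, in a single step), and that form is only possible if the triangle incident to the boundary loop $\alpha_0$ has already been modified beforehand. Any such modification takes place inside the portion of the surface bounded by $\alpha_n$ and $\beta$, which is a copy of $\Gamma_2$, so by Proposition \ref{Aproposition.2.1} it is incident to $\alpha_n$ --- a third flip incident to $\alpha_n$, contradicting the hypothesis. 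This interplay between $\alpha_0$, $\alpha_1$ and $\alpha_n$ through the $\Gamma_2$-subsurface is the essential mechanism, and no amount of case analysis on the winding of the introduced arcs will substitute for it.
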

\begin{proof}
Let $(T_i)_{0\leq{i}\leq{k}}$ be a geodesic from $A_n^-$ to $A_n^+$. Assume that the first flip incident to $\alpha_n$ along $(T_i)_{0\leq{i}\leq{k}}$ is the $j$-th one, and that it introduces an arc with vertices $a_0$ and $a_n$. This flip must then be the one shown on the left of Fig. \ref{Afigure.4.3}.

Assume that at most one flip along $(T_i)_{0\leq{i}\leq{k}}$ other than the $j$-th one is incident to $\alpha_n$. In this case, there must be exactly one such flip among the last $k-j$ flips of $(T_i)_{0\leq{i}\leq{k}}$, say the $l$-th one. Moreover, this flip replaces the triangle of $T_j$ incident to $\alpha_n$ by the triangle of $A_n^+$ incident to $\alpha_n$. There is only one way to do so, depicted in the right of Fig. \ref{Afigure.4.3}. It can be seen that this flip is incident to $\alpha_1$.

The rest of the proof consists in an indirect argument. Assume that at most one flip along $(T_i)_{0\leq{i}\leq{k}}$ other than the $l$-th one is incident to $\alpha_1$. In this case, the first flip incident to $\alpha_1$ along $(T_i)_{0\leq{i}\leq{k}}$, say the $l'$-th one, replaces the triangle of $A_n^-$ incident to arc $\alpha_1$ by the triangle of $T_{l-1}$ incident to this arc.
\begin{figure}
\begin{centering}
\includegraphics{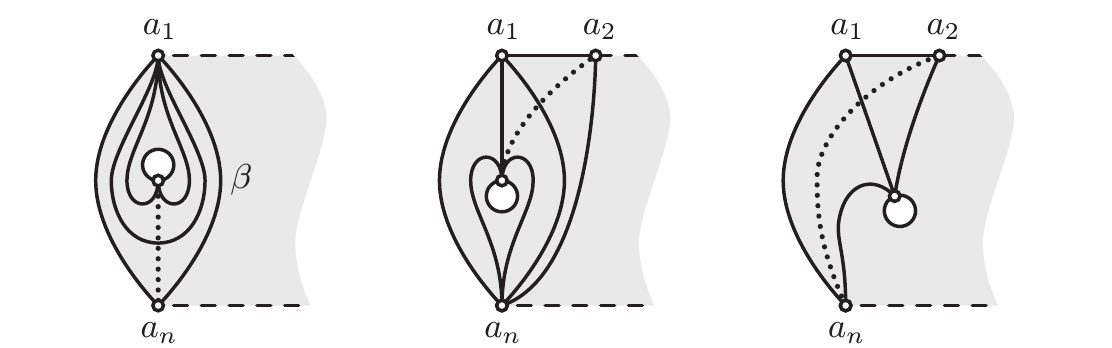}
\caption{The $i$-th flip performed along path $(T_i)_{0\leq{i}\leq{k}}$ in the proof of Lemma \ref{Alemma.4.1}, where $i=j$ (left), $i=l'$ (center), and $i=l$ (right) . In each case, the introduced edge is dotted and the solid edges belong to triangulation $T_{i-1}$.}\label{Afigure.4.3}
\end{centering}
\end{figure}
There is only one way to do so, depicted in the center of Fig. \ref{Afigure.4.3}. One can see that the triangle of $T_{l'-1}$ incident to arc $\alpha_0$ cannot be identical to the triangle of $A_n^-$ incident to this arc. Hence one of the first $l'-1$ flips along $(T_i)_{0\leq{i}\leq{k}}$, say the $j'$-th one, removes the triangle of $A_n^-$ incident to $\alpha_0$.

As $j'<l'$, arc $\beta$ shown in the left of Fig. \ref{Afigure.4.3} belongs to both $T_{j'-1}$ and $T_{j'}$. The portion of each of these triangulations bounded by arcs $\alpha_n$ and $\beta$ belongs to $\MF(\Gamma_2)$. According to Proposition \ref{Aproposition.2.1}, the $j'$-th flip along $(T_i)_{0\leq{i}\leq{k}}$ is then incident to $\alpha_n$. As the $j$-th and $l$-th flips along this path are also incident to $\alpha_n$, this contradicts the assumption that $\alpha_n$ is incident to at most $2$ flips along $(T_i)_{0\leq{i}\leq{k}}$. Therefore $\alpha_1$ must be incident to at least three flips along this geodesic.
\end{proof}

\begin{lemma}\label{Alemma.4.2}
Let $n$ be an integer greater than $2$. Consider a geodesic from $A_n^-$ to $A_n^+$ whose first flip incident to $\alpha_n$ introduces an arc with vertices $a_1$ and $a_2$. If $\alpha_n$ is incident to at most $2$ flips along this geodesic then $\alpha_1$ is incident to at least $4$ flips along it.
\end{lemma}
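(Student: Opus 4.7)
The strategy is to argue by contradiction, following the scheme of Lemma~\ref{Alemma.4.1}. Let $(T_i)_{0\leq i\leq k}$ be the given geodesic and let $j$ be the position of its first flip incident to $\alpha_n$; by hypothesis, this flip introduces an interior arc $\nu$ between $a_1$ and $a_2$.

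First I would explicitly identify the $j$-th flip by examining the triangle of $A_n^-$ incident to $\alpha_n$. Because no earlier flip is incident to $\alpha_n$, the triangle of $T_{j-1}$ incident to $\alpha_n$ coincides with that of $A_n^-$, whose third vertex is $a_0$. The only flip on an edge of this triangle that introduces an arc between $a_1$ and $a_2$ is the one that flips the arc from $a_n$ to $a_0$; its flip-quadrilateral is obtained by joining this triangle with the pierced-ear triangle sharing the arc from $a_n$ to $a_0$, and the fourth vertex of this quadrilateral is $a_2$. This pins down the triangle of $T_j$ incident to $\alpha_n$: it has vertices $a_n$, $a_1$, $a_2$ and its three edges are $\alpha_n$, $\nu$, and the ``ear arc'' from $a_n$ to $a_2$ that was already present in $Z_n$.

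Next I would exploit the assumption that at most two flips of the geodesic are incident to $\alpha_n$. Any second such flip, say the $l$-th, must bring the triangle incident to $\alpha_n$ to the shape it has in $A_n^+$, namely the ear triangle with edges $\alpha_n$, $\alpha_1$, and the arc from $a_n$ to $a_2$. A short case analysis on the flip-quadrilateral shows that the $l$-th flip necessarily acts on $\nu$ and that the triangle of $T_{l-1}$ lying on the opposite side of $\nu$ must have $\alpha_1$ as one of its edges. Consequently, the $l$-th flip is incident to $\alpha_1$ as well.

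For the contradiction, assume that at most three flips along the geodesic are incident to $\alpha_1$. One of them is the $l$-th flip, leaving at most two others. I would then track, as in Lemma~\ref{Alemma.4.1}, the first flip incident to $\alpha_1$ together with the intermediate reconfigurations that the triangle incident to $\alpha_1$ must undergo between $A_n^-$ and $T_{l-1}$, and show that at least one of these intermediate flips occurs within a sub-surface of $\Sigma_n$ whose triangulations belong to $\MF(\Gamma_2)$. Applying Proposition~\ref{Aproposition.2.1} to this sub-surface then produces a third flip incident to $\alpha_n$, contradicting the hypothesis. The main obstacle is the bookkeeping: whereas in Lemma~\ref{Alemma.4.1} the first flip incident to $\alpha_n$ is localised near $\alpha_0$, here the arc $\nu$ cuts off the entire pierced ear, and identifying precisely which intermediate flip lies inside a suitable $\MF(\Gamma_2)$-subsurface will be the delicate part of the argument.
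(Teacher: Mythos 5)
Your proposal establishes at most two of the four required flips and leaves the decisive step as an announced plan, so it does not yet prove the lemma. Concretely: you correctly pin down the triangle of $T_j$ incident to $\alpha_n$, and you correctly argue that the second flip incident to $\alpha_n$ (the $l$-th one) must remove $\nu$ and is therefore also incident to $\alpha_1$; the paper does the same, and in addition records that the $j$-th flip itself is incident to $\alpha_1$ (its quadrilateral has $\alpha_1$ and the loop arc at $a_1$ among its sides), a fact you do not use. The heart of the lemma, however, is producing two \emph{further} flips incident to $\alpha_1$, and for that you only offer a contradiction scheme modelled on Lemma \ref{Alemma.4.1} whose key step --- locating an intermediate flip inside a suitable $\MF(\Gamma_2)$ subsurface so as to manufacture a third flip incident to $\alpha_n$ --- you yourself describe as ``the delicate part'' and do not carry out. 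That is exactly where the content of the lemma lies, so the argument is incomplete. The paper avoids any indirect argument here: it observes that the quadrilateral of the $j$-th flip contains a loop arc at $a_1$ (hence $T_j$ still contains that loop) while the quadrilateral of the $l$-th flip contains a loop arc at $a_2$ (hence $T_{l-1}$ already contains that one); since the arc $\nu$ introduced at step $j$ survives until step $l$, the flip that removes the first loop and the flip that introduces the second are two additional, distinct flips incident to $\alpha_1$, giving four in all.

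A secondary but real issue is your identification of the $j$-th flip. The triangle of $A_n^-$ incident to $\alpha_n$ does not have $a_0$ as a vertex, and $A_n^-$ contains no arc joining $a_n$ to $a_0$ --- Lemma \ref{Alemma.4.1} treats precisely the case where the first flip incident to $\alpha_n$ \emph{introduces} such an arc. The arc removed by the $j$-th flip in the present case is the interior arc with vertices $a_1$ and $a_n$ that separates the boundary loop from the rest of the surface; the sides of its quadrilateral are $\alpha_n$, the loop arc at $a_1$, $\alpha_1$, and the arc from $a_2$ to $a_n$. Your conclusion about the triangle of $T_j$ incident to $\alpha_n$ happens to be correct, but the slip indicates that the picture of $A_n^-$ you are working from is not the one in Fig.~\ref{Afigure.4.2}, and it is exactly the presence of the loop arcs in the two flip quadrilaterals of Fig.~\ref{Afigure.4.4} that makes the paper's direct count of four flips available.
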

\begin{proof}
Let $(T_i)_{0\leq{i}\leq{k}}$ be a geodesic from $A_n^-$ to $A_n^+$ whose first flip incident to $\alpha_n$, say the $j$-th one, introduces an arc with vertices $a_1$ and $a_2$. This flip must then be the one shown on the left of Fig. \ref{Afigure.4.4}. Note that it is incident to arc $\alpha_1$.

Assume that at most one flip along $(T_i)_{0\leq{i}\leq{k}}$ other than the $j$-th one is incident to $\alpha_n$. In this case, there must be exactly one such flip among the last $k-j$ flips of $(T_i)_{0\leq{i}\leq{k}}$, say the $l$-th one. Moreover, this flip replaces the triangle of $T_j$ incident to $\alpha_n$ by the triangle of $A_n^+$ incident to $\alpha_n$. There is only one way to do so, depicted in the right of Fig. \ref{Afigure.4.4}. Note that this flip is also incident to $\alpha_1$.

Finally, as the arc introduced by the $j$-th flip along $(T_i)_{0\leq{i}\leq{k}}$ is not removed before the $l$-th flip, there must be two more flips incident to arc $\alpha_1$ along this geodesic: the flip that removes the loop arc with vertex $a_1$ shown on the left of Fig. \ref{Afigure.4.4} and the flip that introduces the loop arc with vertex $a_2$ shown on the right of the figure. This proves that at least four flips are incident to $\alpha_1$ along $(T_i)_{0\leq{i}\leq{k}}$.
\end{proof}

\begin{figure}
\begin{centering}
\includegraphics{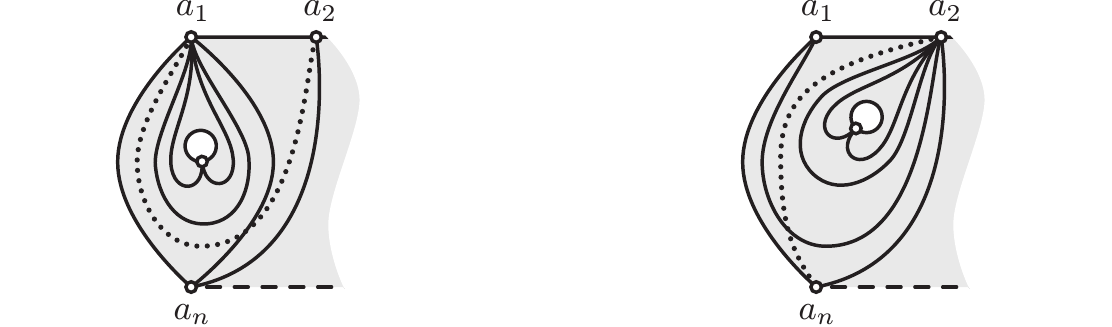}
\caption{The $i$-th flip performed along path $(T_i)_{0\leq{i}\leq{k}}$ in the proof of Lemma \ref{Alemma.4.2}, where $i=j$ (left), and $i=l$ (right). In each case, the introduced edge is dotted and the solid edges belong to triangulation $T_{i-1}$.}\label{Afigure.4.4}
\end{centering}
\end{figure}

\begin{lemma}\label{Alemma.4.3}
For $n\geq4$, consider a geodesic from $A_n^-$ to $A_n^+$ whose first flip incident to $\alpha_n$ introduces an arc with vertices $a_1$ and $a_p$, where $2<p<n$. Then:
$$
d(A_n^-,A_n^+)\geq\min(\{d(A_{n-1}^-,A_{n-1}^+)+3,d(A_{n-2}^-,A_{n-2}^+)+5\})\mbox{.}
$$
\end{lemma}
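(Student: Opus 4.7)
The plan is to do case analysis on the number of flips along the given geodesic $(T_i)_{0\leq i\leq k}$ that are incident to $\alpha_n$, where the $j$-th flip is the first such flip and, by hypothesis, introduces an arc with vertices $a_1$ and $a_p$ for some $2<p<n$. If at least three flips along the geodesic are incident to $\alpha_n$, then Theorem~\ref{Atheorem.2.3} combined with the identification $A_n^{\pm}\contract{n}\cong A_{n-1}^{\pm}$ immediately yields $d(A_n^-,A_n^+)\geq d(A_{n-1}^-,A_{n-1}^+)+3$, giving the first term of the minimum via \eqref{Aequation.4.2}.

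Otherwise, at most two flips are incident to $\alpha_n$, and I would invoke the recipe of \eqref{Aequation.4.3} with the pair $(i,j)=(1,n-1)$, which satisfies $j\in\{n-i,n-i+1\}$ and for which $A_n^{\pm}\contract{1}\contract{n-1}\cong A_{n-2}^{\pm}$ by the observation preceding \eqref{Aequation.4.3}. This reduces the task to establishing two sub-claims: (a)~at least three flips along the geodesic are incident to $\alpha_1$, and (b)~the contracted path from $A_n^-\contract{1}$ to $A_n^+\contract{1}$ in $\MF(\Gamma_{n-1})$ produced by Lemma~\ref{Alemma.2.1} contains at least two flips incident to the boundary arc $\alpha_{n-1}$ of $\Gamma_{n-1}$, which under the relabeling $a_i\mapsto a_{i-1}$ corresponds to the original arc $\alpha_n$.

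The main step is sub-claim~(a). The four corners of the $j$-th flip's quadrilateral are essentially determined: since no earlier flip is incident to $\alpha_n$, the triangle of $T_{j-1}$ incident to $\alpha_n$ coincides with that of $A_n^-$, fixing three of the corners, while the fourth must be $a_p$. Since $a_p$ is neither $a_n$ nor one of the vertices appearing in $A_n^-$'s pod-triangle at $\alpha_n$, the adjacent triangle needed in $T_{j-1}$ is absent from $A_n^-$ and must be produced by preliminary flips $1,\ldots,j-1$ that modify the region around the pierced ear at $a_1$. An indirect contradiction argument, in the spirit of the end of the proof of Lemma~\ref{Alemma.4.1}, assuming at most two flips are incident to $\alpha_1$ and then tracking the arcs that must be introduced and removed for the $j$-th flip to be possible, for the second flip incident to $\alpha_n$ to restore the $A_n^+$-triangle at $\alpha_n$, and for the introduced arc $(a_1,a_p)$ to be eventually removed before reaching $A_n^+$, leads to a contradiction with the assumption that at most two flips are incident to $\alpha_n$. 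The delicate part is that the analysis must be uniform in the choice of $p$ with $2<p<n$, so the combinatorial bookkeeping of arc introductions and removals cannot depend on how far $a_p$ sits along the privileged boundary.

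Sub-claim~(b) follows by an argument analogous to that of Lemma~\ref{Alemma.4.1} applied in $\MF(\Gamma_{n-1})$: the triangulations $A_n^{\pm}\contract{1}$ differ at the triangle incident to the relabeled $\alpha_{n-1}$, forcing one flip, and a second flip is forced by the same indirect reasoning as in that lemma. Two applications of Theorem~\ref{Atheorem.2.3} then give $d(A_n^-,A_n^+)\geq d(A_{n-2}^-,A_{n-2}^+)+5$, which is the second term of the minimum, completing the argument in the second case.
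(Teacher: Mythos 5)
Your overall scheme (three flips incident to one boundary arc, then two more after contracting) is indeed the right template, but both of your sub-claims are asserted rather than proved, and sub-claim~(a) is exactly where the difficulty of the lemma lives. You claim that at least three flips along the geodesic are incident to the \emph{fixed} arc $\alpha_1$, uniformly over $2<p<n$, and you defer the proof to an ``indirect contradiction argument in the spirit of Lemma~\ref{Alemma.4.1}'' while conceding that the ``delicate part'' (uniformity in $p$) is unresolved. That is not a proof, and there is no evidence the claim even holds: the paper's argument deliberately avoids it. What the paper does instead is observe that the arc with vertices $a_1$ and $a_p$ cuts off a triangulated disc inside $T_j$, which therefore contains an ear of $T_j$ at some vertex $a_q$ with $2\leq q<p$ or $p<q\leq n$; it then cuts the geodesic at $T_j$ and applies Lemma~\ref{Alemma.2.75} to \emph{each half}, obtaining the needed flips incident to $\alpha_{q-1}$ or $\alpha_q$ --- arcs whose location depends on the geodesic, not the fixed arc $\alpha_1$. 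The second deletion is then performed at $\alpha_s$ with $s=n-q+1$, again chosen relative to $q$. The fact that the authors needed this geodesic-dependent choice is a strong hint that no fixed pair such as $(\alpha_1,\alpha_{n-1})$ works for all geodesics and all $p$.

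Sub-claim~(b) has a gap of its own. To feed Theorem~\ref{Atheorem.2.3} a second time you need a \emph{geodesic} between $A_n^-\contract{1}$ and $A_n^+\contract{1}$ with at least \emph{two} flips incident to $\alpha_{n-1}$; the mere fact that the triangles incident to that arc differ gives only one flip, and in $\Gamma_{n-1}$ a single flip can perfectly well exchange two triangles incident to a boundary arc (the obstruction of Fig.~\ref{Afigure.5.3} is specific to $\Pi$). In the analogous steps of Theorem~\ref{Atheorem.4.1} the paper either settles for one flip there and compensates with \emph{four} flips incident to $\alpha_1$ (via Lemma~\ref{Alemma.4.2}), or manufactures two flips by interposing the contracted intermediate triangulation $T_j\contract{1}$ and counting one flip on each half. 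Your proposal invokes neither mechanism, and note also that $A_n^{\pm}\contract{1}$ are not isomorphic to $A_{n-1}^{\pm}$ (only deletion of $a_n$, or of a suitable pair of vertices, yields the smaller reference triangulations), so ``applying Lemma~\ref{Alemma.4.1} in $\MF(\Gamma_{n-1})$'' does not literally make sense. As it stands the proposal is a plausible plan, not a proof.
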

\begin{proof}
Let $(T_i)_{0\leq{i}\leq{k}}$ be a geodesic from $A_n^-$ to $A_n^+$ whose first flip incident to $\alpha_n$, say the $j$-th one, introduces an arc with vertices $a_1$ and $a_p$, where $2<p<n$. This flip is depicted in Fig. \ref{Afigure.4.5}. It is first shown that $T_j$ has an ear in some vertex $a_q$ where $2\leq{q}<p$ if $p\leq{\lceil{n/2}\rceil}$, and $p<q\leq{n}$ otherwise.

Assume that $p$ is not greater than $\lceil{n/2}\rceil$. Consider the arc of $T_j$ with vertices $a_1$ and $a_p$ shown as a solid line on the left of Fig. \ref{Afigure.4.5}.
\begin{figure}
\begin{centering}
\includegraphics{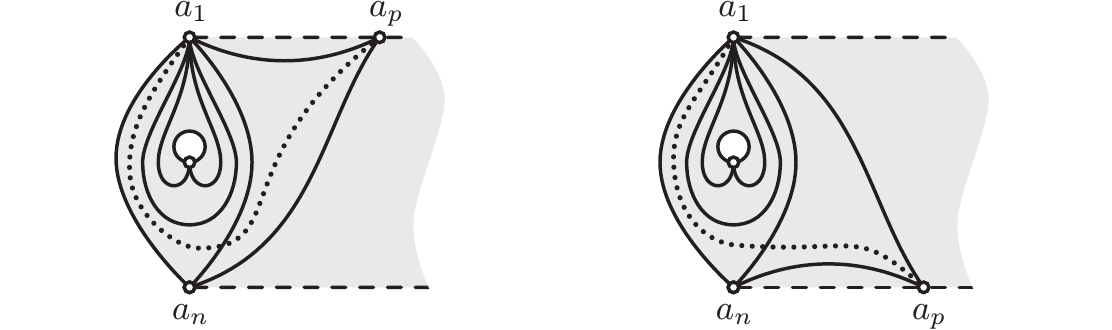}
\caption{The $j$-th flip performed along path $(T_i)_{0\leq{i}\leq{k}}$ in the proof of Lemma \ref{Alemma.4.3} when $p\leq{\lceil{n/2}\rceil}$ (left) and when $p>\lceil{n/2}\rceil$ (right). In each case, the introduced edge is dotted and the solid edges belong to triangulation $T_{j-1}$.}\label{Afigure.4.5}
\end{centering}
\end{figure}
The portion of $T_j$ bounded by this arc and by arcs $\alpha_1$, ..., $\alpha_{p-1}$ is a triangulation of disc $\Delta_p$. If $p>3$, then this triangulation has at least two ears, and one of them is also an ear of $T_j$ in vertex $a_q$ where $2\leq{q}<p$. If $p=3$ this property still necessarily holds with $q=2$ since the triangulation of $\Delta_p$ induced by $T_j$ is made up of a single triangle. 

Now assume that $\lceil{n/2}\rceil<p<n$. Consider the arc with vertices $a_1$ and $a_p$ introduced by the $j$-th flip along $(T_i)_{0\leq{i}\leq{k}}$ and shown as a dotted line on the right of Fig. \ref{Afigure.4.5}. The portion of $T_j$ bounded by this arc and by arcs $\alpha_p$, ..., $\alpha_n$ is a triangulation of $\Delta_{n-p+2}$. Since $n-p+2$ is at least $3$, an argument similar to the one used in the last paragraph shows that $T_j$ has an ear in some vertex $a_q$ where $p<q\leq{n}$.

This proves that $T_j$ has an ear in $a_q$ so that:
$$
2\leq{q}<p\leq\lceil{n/2}\rceil\mbox{ or }\lceil{n/2}\rceil<p<q\leq{n}\mbox{.}
$$

Note that, if one cuts geodesic $(T_i)_{0\leq{i}\leq{k}}$ at triangulation $T_j$, then Lemma \ref{Alemma.2.75} can be invoked for each of the resulting portions. Doing so, we find that either $\alpha_{q-1}$ and $\alpha_q$ are both incident to exactly $3$ flips along this geodesic or one of these arcs is incident to at least $4$ flips along it. We review the two cases separately.

If $\alpha_r$ is incident to at least $4$ flips along $(T_i)_{0\leq{i}\leq{k}}$, where $r$ is equal to $q-1$ or to $q$, then Theorem \ref{Atheorem.2.3} yields
\begin{equation}\label{Alemma.4.3.equation.1}
d(A_n^-,A_n^+)\geq{d(A_n^-\contract{r},A_n^+\contract{r})+4}\mbox{.}
\end{equation}

Call $s=n-q+1$ and observe that arc $\alpha_s$ is not incident to the same triangle in $A_n^-\contract{r}$ and in $A_n^+\contract{r}$. Hence, some flip must be incident to this arc along any geodesic between $A_n^-\contract{r}$ and $A_n^+\contract{r}$. Invoking Theorem \ref{Atheorem.2.3} again, we find
\begin{equation}\label{Alemma.4.3.equation.2}
d(A_n^-\contract{r},A_n^+\contract{r})\geq{d(A_n^-\contract{r}\contract{s},A_n^+\contract{r}\contract{s})+1}\mbox{.}
\end{equation}

As $A_n^-\contract{r}\contract{s}$ and $A_n^+\contract{r}\contract{s}$ are isomorphic to $A_{n-2}^-$ and $A_{n-2}^+$ by the same vertex relabeling, the desired result is obtained combining (\ref{Alemma.4.3.equation.1}) and (\ref{Alemma.4.3.equation.2}).

It is assumed in the remainder of the proof that $\alpha_{q-1}$ and $\alpha_q$ are both incident to exactly $3$ flips along $(T_i)_{0\leq{i}\leq{k}}$. If $q=n$, then the result immediately follows from Theorem \ref{Atheorem.2.3} because $A_n^-\contract{n}$ and $A_n^+\contract{n}$ are isomorphic to $A_{n-1}^-$ and $A_{n-1}^+$ by the same vertex relabeling. We will therefore also assume that $q<n$.

Call $r=q-1$ if $p\leq\lceil{n/2}\rceil$ and $r=q$ otherwise.
\begin{figure}
\begin{centering}
\includegraphics{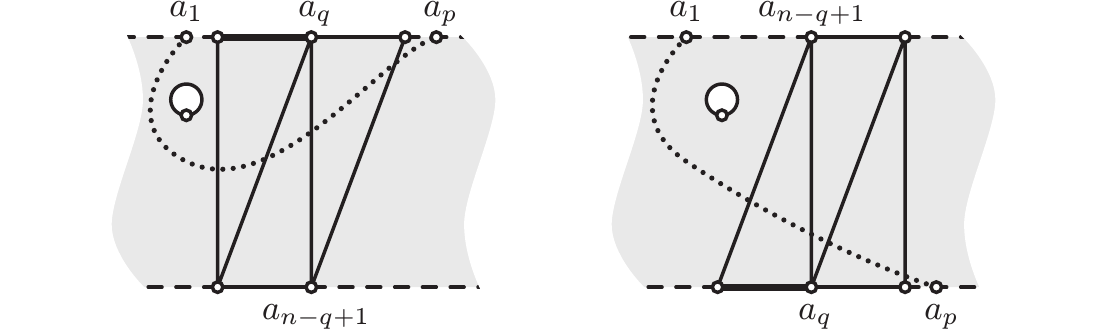}
\caption{The portion of either $A_n^-$ or $A_n^+$ next to vertex $a_q$ when $2\leq{q}\leq\lceil{n/2}\rceil$ (left) and $\lceil{n/2}\rceil<q<n$ (right). The dotted line shows an arc with vertices $a_1$ and $a_p$ used in the proof of Theorem \ref{Atheorem.4.1}, and the bold line shows the boundary arc removed when vertex $a_r$ is deleted.}\label{Afigure.4.6}
\end{centering}
\end{figure}
Cutting geodesic $(T_i)_{0\leq{i}\leq{k}}$ at $T_j$ and invoking Theorem \ref{Atheorem.2.3} for each of the resulting portions yields
\begin{equation}\label{Alemma.4.3.equation.3}
d(A_n^-,A_n^+)\geq{d(A_n^-\contract{r},T_j\contract{r})+d(T_j\contract{r},A_n^+\contract{r})+3}\mbox{.}
\end{equation}

Call $s=n-q+1$. The portion of $A_n^-$ and $A_n^+$ close to vertex $a_q$ is depicted in Fig. \ref{Afigure.4.6} depending on whether $p\leq\lceil{n/2}\rceil$ or $p>\lceil{n/2}\rceil$. The arc introduced by the $j$-th flip along $(T_i)_{0\leq{i}\leq{k}}$ is depicted as a dotted line in this figure. 

One can see in Fig. \ref{Afigure.4.6} that the triangles incident to $\alpha_s$ in either $A_n^-\contract{r}$ and $A_n^+\contract{r}$ must intersect the arc with vertices $a_1$ and $a_p$ contained in $T_j\contract{r}$. More precisely, this property holds when $p\leq\lceil{n/2}\rceil$ because $q<p$ and because the boundary loop is above the dotted arc shown on the left of the figure. It holds when $p\leq\lceil{n/2}\rceil$ because $p<q<n$ and $1<s$ (right of the figure). As a consequence, the triangles incident to $\alpha_s$ in $A_n^-\contract{r}$ and $A_n^+\contract{r}$ cannot belong to $T_j\contract{r}$, and at least one flip is incident to $\alpha_s$ along any geodesic between $A_n^-\contract{r}$ and $T_j\contract{r}$ or between $T_j\contract{r}$ and $A_n^+\contract{r}$. Therefore, by inequality (\ref{Alemma.4.3.equation.3}) and Theorem \ref{Atheorem.2.3},
\begin{equation}\label{Alemma.4.3.equation.4}
d(A_n^-,A_n^+)\geq{d(A_n^-\contract{r}\contract{s},T_j\contract{r}\contract{s})+d(T_j\contract{r}\contract{s},A_n^+\contract{r}\contract{s})+5}\mbox{.}
\end{equation}

Since $A_n^-\contract{r}\contract{s}$ and $A_n^+\contract{r}\contract{s}$ are isomorphic to $A_{n-2}^-$ and $A_{n-2}^+$ by the same vertex relabeling, the result follows from (\ref{Alemma.4.3.equation.4}) and from the triangle inequality.
\end{proof}

We can now prove the main estimate.

\begin{theorem}\label{Atheorem.4.1}
For every integer $n$ greater than $2$,
$$
d(A_n^-,A_n^+)\geq\min(\{d(A_{n-1}^-,A_{n-1}^+)+3,d(A_{n-2}^-,A_{n-2}^+)+5\})\mbox{.}
$$
\end{theorem}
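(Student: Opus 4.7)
The plan is to fix a geodesic $(T_i)_{0\le i\le k}$ from $A_n^-$ to $A_n^+$ with $k=d(A_n^-,A_n^+)$, and to case-split on the first flip along it that is incident to $\alpha_n$. Such a flip exists because the triangles of $A_n^-$ and $A_n^+$ incident to $\alpha_n$ are distinct; denote it the $j$-th flip.

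First I would dispose of the easier case: if at least three flips along the geodesic are incident to $\alpha_n$, then Theorem~\ref{Atheorem.2.3}, combined with the identification $A_n^\pm\contract{n}\cong A_{n-1}^\pm$ noted before Lemma~\ref{Alemma.4.1}, directly gives
\[
d(A_n^-,A_n^+)\ge d(A_{n-1}^-,A_{n-1}^+)+3,
\]
which is the first alternative of the stated minimum.

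Next, assume that at most two flips are incident to $\alpha_n$. The triangle of $A_n^-$ incident to $\alpha_n$ has vertices $a_n$, $a_1$ and $a_0$, with interior arcs $(a_n,a_0)$ and $(a_1,a_0)$. A direct inspection of $A_n^-$ together with the two triangles adjacent to it through these arcs shows that the arc introduced by the $j$-th flip falls into exactly one of the three configurations treated by Lemmas~\ref{Alemma.4.1},~\ref{Alemma.4.2},~\ref{Alemma.4.3}: endpoints $a_0$ and $a_n$; endpoints $a_1$ and $a_2$; or endpoints $a_1$ and $a_p$ with $2<p<n$. The third configuration is resolved directly by Lemma~\ref{Alemma.4.3}. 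In the first two, Lemmas~\ref{Alemma.4.1} and~\ref{Alemma.4.2} respectively provide at least three and four flips incident to $\alpha_1$. I would then apply Theorem~\ref{Atheorem.2.3} first to $\alpha_1$ (deleting $a_1$), and then to an auxiliary boundary arc chosen so that the triangles of $A_n^\pm\contract{1}$ incident to it remain distinct, thereby forcing a further flip from the second deletion. Under a careful choice of the second arc, the doubly-deleted triangulations can be identified with $A_{n-2}^\pm$ after relabeling and the total number of flips dropped is at least five, yielding the second alternative $d(A_n^-,A_n^+)\ge d(A_{n-2}^-,A_{n-2}^+)+5$.

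The hard part will be the precise bookkeeping of flips that are simultaneously incident to several boundary arcs. Indeed, the proof of Lemma~\ref{Alemma.4.1} already exhibits a flip incident to both $\alpha_n$ and $\alpha_1$, so naively adding the numbers of flips incident to distinct arcs would double-count. One must therefore identify the overlaps exactly, and verify that, after deleting $a_1$, the chosen auxiliary arc still carries at least two fresh flips — either by applying Lemma~\ref{Alemma.2.75} to an ear of $A_n^\pm\contract{1}$ created by the first deletion, or by arguing directly from a residual triangle mismatch. Once this tight accounting is carried out, the three cases combine to give the claimed minimum.
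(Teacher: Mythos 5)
Your overall architecture coincides with the paper's: dispose of the case of at least three flips incident to $\alpha_n$ via Theorem \ref{Atheorem.2.3} and the identification $A_n^\pm\contract{n}\cong A_{n-1}^\pm$, then classify the first flip incident to $\alpha_n$ into the three configurations of Lemmas \ref{Alemma.4.1}--\ref{Alemma.4.3}, delegating the third to Lemma \ref{Alemma.4.3}. Your treatment of the $a_1a_2$ configuration is also exactly the paper's: four flips incident to $\alpha_1$ from Lemma \ref{Alemma.4.2}, plus one flip incident to $\alpha_{n-1}$ because the triangles of $A_n^-\contract{1}$ and $A_n^+\contract{1}$ incident to that arc differ, and $\contract{1}\contract{n-1}$ lands on $A_{n-2}^\pm$.

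The gap is in the $a_0a_n$ configuration. There Lemma \ref{Alemma.4.1} supplies only \emph{three} flips incident to $\alpha_1$, so your scheme of ``three from $\alpha_1$ plus one from an auxiliary arc whose incident triangles differ in $A_n^\pm\contract{1}$'' stops at $+4$; you correctly flag that two fresh flips are needed after deleting $a_1$, but neither mechanism you offer delivers them. A ``residual triangle mismatch'' between the two endpoints $A_n^-\contract{1}$ and $A_n^+\contract{1}$ guarantees only one flip by Theorem \ref{Atheorem.2.3}, and Lemma \ref{Alemma.2.75} requires an ear together with a non-adjacency condition in the other triangulation that you have not verified. The paper's resolution is a three-point argument: it cuts the geodesic at the intermediate triangulation $T_j$ produced by the first flip incident to $\alpha_n$, contracts $a_1$ on each half, and observes that the triangles incident to $\alpha_n$ in $A_n^-\contract{1}$, in $T_j\contract{1}$, and in $A_n^+\contract{1}$ are \emph{pairwise} distinct; hence each half carries at least one flip incident to $\alpha_n$ after the contraction, and the triangle inequality gives $d(A_n^-\contract{1}\contract{n},A_n^+\contract{1}\contract{n})+2$ on the contracted side, for a total of $+5$. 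Without introducing the intermediate triangulation (or some equivalent device) your count in this case does not reach the required five.
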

\begin{proof}
Assume that $n\geq3$ and consider a geodesic $(T_i)_{0\leq{i}\leq{k}}$ from $A_n^-$ to $A_n^+$. If at least $3$ flips are incident to $\alpha_n$ along it, then Theorem \ref{Atheorem.2.3} yields
$$
d(A_n^-,A_n^+)\geq{d(A_{n-1}^-,A_{n-1}^+)+3}\mbox{.}
$$

Indeed, as mentioned above, $A_n^-\contract{n}$ and $A_n^+\contract{n}$ are respectively isomorphic to $A_{n-1}^-$ and $A_{n-1}^+$ via the same vertex relabeling. Therefore in this case, the desired result holds. So we can assume in the remainder of the proof that at most two flips are incident to $\alpha_n$ along $(T_i)_{0\leq{i}\leq{k}}$. Further assume that the first flip incident to $\alpha_n$ along this geodesic is the $j$-th one. We review three cases, depending on which arc is introduced by this flip.

First assume that the $j$-th flip introduces an arc with vertices $a_0$ and $a_n$. This flip must be the one depicted in the left of Fig. \ref{Afigure.4.3}. Now consider a geodesic $(T'_i)_{0\leq{i}\leq{k'}}$ from $A_n^-\contract{1}$ to $T_j\contract{1}$, and a geodesic $(T''_i)_{j\leq{i}\leq{k''}}$ from $T_j\contract{1}$ to $A_n^+\contract{1}$. It follows from Lemma \ref{Alemma.4.1} and from Theorem \ref{Atheorem.2.3} that
\begin{equation}\label{Aequation.4.1.1}
k'+k''\leq{d(A_n^-,A_n^+)-3}\mbox{.}
\end{equation}

It can be seen on the left of Fig. \ref{Afigure.4.3} that the triangles incident to $\alpha_{n}$ in $A_n^-\contract{1}$, in $T_j\contract{1}$, and in $A_n^+\contract{1}$ are pairwise distinct. As a consequence at least one flip must be incident to $\alpha_n$ along each of the geodesics $(T'_i)_{0\leq{i}\leq{k'}}$ and $(T''_i)_{j\leq{i}\leq{k''}}$. In this case, Theorem \ref{Atheorem.2.3} yields
$$
k'\geq{d(A_n^-\contract{1}\contract{n},T_j\contract{1}\contract{n})+1}\mbox{ and }k''\geq{d(T_j\contract{1}\contract{n},A_n^+\contract{1}\contract{n})+1}\mbox{.}
$$

By the triangle inequality, one obtains
\begin{equation}\label{Aequation.4.1.2}
k'+k''\geq{d(A_n^-\contract{1}\contract{n},A_n^+\contract{1}\contract{n})+2}\mbox{.}
\end{equation}

Since $A_n^-\contract{1}\contract{n}$ and $A_n^+\contract{1}\contract{n}$ are isomorphic to $A_{n-2}^-$ and $A_{n-2}^+$ via the same vertex relabeling, the desired result follows from inequalities (\ref{Aequation.4.1.1}) and (\ref{Aequation.4.1.2}).

Now assume that the $j$-th flip introduces an arc with vertices $a_1$ and $a_2$. It follows from Lemma \ref{Alemma.4.1} and from Theorem \ref{Atheorem.2.3} that
\begin{equation}\label{Aequation.4.4}
d(A_n^-,A_n^+)\geq{d(A_n^-\contract{1},A_n^+\contract{1})+4}\mbox{.}
\end{equation}

Observe that arc $\alpha_{n-1}$ is not incident to the same triangle in $A_n^-\contract{1}$ and in $A_n^+\contract{1}$. Therefore, there must be at least one flip incident to $\alpha_{n-1}$ along any geodesic between these triangulations. Therefore, by Theorem \ref{Atheorem.2.3},
\begin{equation}\label{Aequation.4.5}
d(A_n^-\contract{1},A_n^+\contract{1})\geq{d(A_n^-\contract{1}\contract{n-1},A_n^+\contract{1}\contract{n-1})+1}\mbox{.}
\end{equation}

As $A_n^-\contract{1}\contract{n-1}$ and $A_n^+\contract{1}\contract{n-1}$ are isomorphic to $A_{n-2}^-$ and $A_{n-2}^+$ via the same vertex relabeling, the result is obtained combining (\ref{Aequation.4.4}) and (\ref{Aequation.4.5}).

Finally, if the $j$-th flip introduces an arc with vertices $a_1$ and $a_p$, where $2<p<n$, then $n$ must be greater than $3$ and the result follows from Lemma \ref{Alemma.4.3}.
\end{proof}

As we now have a lower and upper bounds, we can conclude the following.

\begin{theorem}\label{Atheorem.4.2}
The diameter of $\mathcal{MF}(\Gamma_n)$ is $\lfloor{ \frac{5}{2} n }\rfloor-2$.\end{theorem}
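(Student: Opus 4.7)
The upper bound $\diam(\mathcal{MF}(\Gamma_n)) \leq \lfloor \frac{5}{2}n \rfloor - 2$ is already contained in Theorem \ref{thm:gammaupper}, since the diameter is an integer and $\tfrac{5}{2}n - 2$ is an integer whenever $n$ is even and equals $\lfloor \tfrac{5}{2}n \rfloor - 2 + \tfrac{1}{2}$ whenever $n$ is odd. Thus the plan is to establish the matching lower bound
$$
d(A_n^-, A_n^+) \geq \left\lfloor \frac{5}{2} n \right\rfloor - 2,
$$
which implies $\diam(\mathcal{MF}(\Gamma_n)) \geq \lfloor \tfrac{5}{2} n \rfloor - 2$ directly, since $A_n^-$ and $A_n^+$ both lie in $\mathcal{MF}(\Gamma_n)$.

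The proof is by strong induction on $n$, driven by the recursion from Theorem \ref{Atheorem.4.1}. The base cases are handled by inspection of Fig. \ref{Afigure.2.1}: since $A_1^- = A_1^+$ is the unique element of $\mathcal{MF}(\Gamma_1)$, we have $d(A_1^-, A_1^+) = 0 = \lfloor \tfrac{5}{2} \rfloor - 2$, and the four triangulations in $\mathcal{MF}(\Gamma_2)$ together with the edges shown in the figure give $d(A_2^-, A_2^+) = 3 = \lfloor 5 \rfloor - 2$. For $n \geq 3$, Theorem \ref{Atheorem.4.1} yields
$$
d(A_n^-, A_n^+) \geq \min\bigl(d(A_{n-1}^-, A_{n-1}^+) + 3,\ d(A_{n-2}^-, A_{n-2}^+) + 5\bigr),
$$
and applying the inductive hypothesis to both terms on the right reduces the claim to the elementary inequalities
$$
\left\lfloor \frac{5(n-1)}{2} \right\rfloor + 1 \geq \left\lfloor \frac{5n}{2} \right\rfloor - 2
\quad \text{and} \quad
\left\lfloor \frac{5(n-2)}{2} \right\rfloor + 3 \geq \left\lfloor \frac{5n}{2} \right\rfloor - 2,
$$
both of which are immediate by splitting into cases according to the parity of $n$ (in fact the second inequality is an equality, while the first is an equality when $n$ is even and a strict inequality when $n$ is odd).

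The main obstacle in this argument has already been overcome in the excerpt, namely the recursion of Theorem \ref{Atheorem.4.1} and its supporting lemmas. Everything that remains is a verification that the explicit formula $\lfloor \tfrac{5}{2} n \rfloor - 2$ is compatible with this recursion and with the two base cases; combined with Theorem \ref{thm:gammaupper}, this pins down the diameter exactly.
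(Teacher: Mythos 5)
Your proposal is correct and follows essentially the same route as the paper: induction on $n$ via the recursion of Theorem \ref{Atheorem.4.1}, with base cases $n=1,2$ read off from Fig. \ref{Afigure.2.1}, combined with the upper bound of Theorem \ref{thm:gammaupper}. You merely make explicit the floor-function arithmetic and the integrality argument for the upper bound, both of which check out.
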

\begin{proof}
Since $\Gamma_1$ has a unique triangulation up to homeomorphism, $\mathcal{MF}(\Gamma_1)$ has diameter $0$. Moreover, as can be seen in Fig. \ref{Afigure.2.1}, $\mathcal{MF}(\Gamma_2)$ has diameter $3$. The lower bound of $\lfloor{5n/2}\rfloor-2$ on the diameter of $\mathcal{MF}(\Gamma_n)$ therefore follows by induction from Theorem \ref{Atheorem.4.1}. Combining this lower bound with the upper bound provided by Theorem \ref{thm:gammaupper} completes the proof.
\end{proof}

\section{Lower bounds for $\Pi$}
\label{Asection.5}

We now turn our attention to triangulations of $\Pi$. We shall, for any $n\geq 1$, build two triangulations $B_n^-$ and $B_n^+$ in $\MF(\Pi_n)$ whose flip distance is $3n+K_\Pi$, where $K_\Pi$ does not depend on $n$. 
\begin{figure}
\begin{centering}
\includegraphics{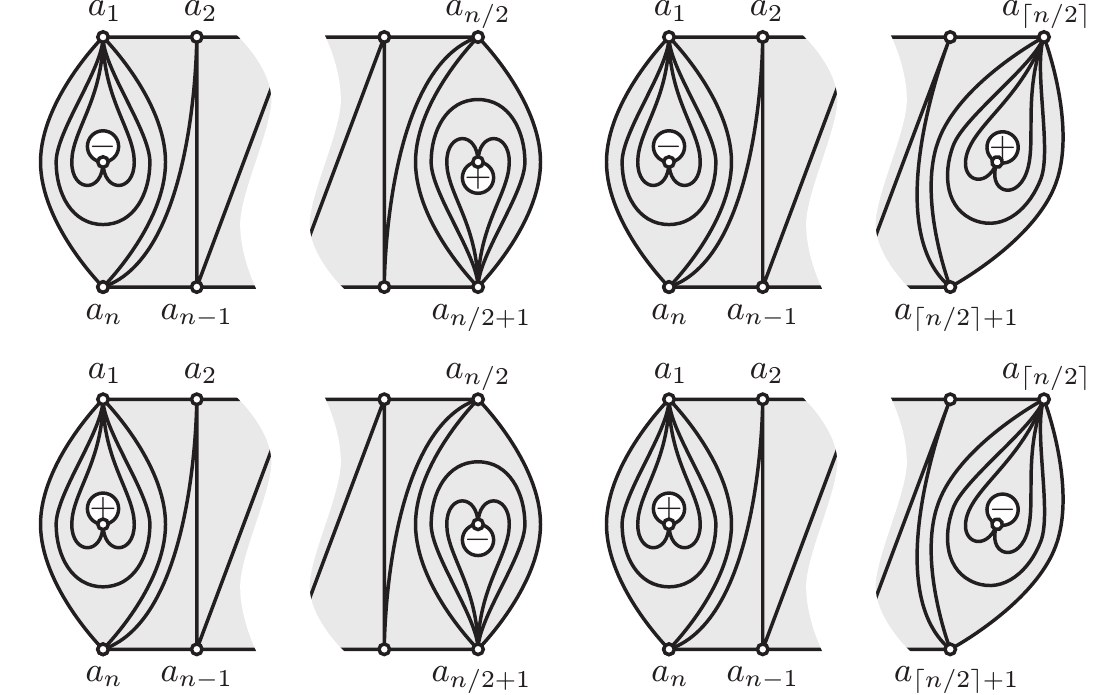}
\caption{Triangulations $B_n^-$ (top row) and $B_n^+$ (bottom row) depicted when $n$ is even (left) and odd (right). Vertices $a_-$ and $a_+$ are respectively labeled $-$ and $+$.}\label{Afigure.5.1}
\end{centering}
\end{figure}

First assume that $n$ is greater than $2$. Observe that $A_n^-$ has an ear in vertex $a_{\lfloor{n/2}\rfloor+1}$ (see Fig. \ref{Afigure.4.2}). One can transform $A_n^-$ into a triangulation that belongs to $\MF(\Pi_n)$ by placing a boundary loop $\alpha_+$ with a vertex $a_+$ in this ear and by re-triangulating the ear around the boundary loop as shown in the top of Fig. \ref{Afigure.5.1}, depending on the parity of $n$. Note that vertex $a_0$ and arc $\alpha_0$ are further relabeled by $a_-$ and $\alpha_-$ as shown in the figure. The resulting triangulation will be called $B_n^-$.

Similarly, consider the ear of $A_n^+$ in $a_1$. One can obtain a triangulation that belongs to $\MF(\Pi_n)$ by placing a boundary loop $\alpha_+$ with a vertex $a_+$ in this ear and by re-triangulating the pierced ear as shown in the bottom of Fig. \ref{Afigure.5.1}, depending on the parity of $n$. The resulting triangulation, wherein vertex $a_0$ and arc $\alpha_0$ have been relabelled $a_-$ and $\alpha_-$, will be called $B_n^+$.

When $1\leq{n}\leq2$, $B_n^-$ and $B_n^+$ will be the triangulations in $\MF(\Pi_n)$ depicted in Fig. \ref{Afigure.5.2}. Most of the section is devoted to proving the following inequality when $n\geq3$:
\begin{equation}\label{Aequation.5.1}
d(B_n^-,B_n^+)\geq\min(\{d(B_{n-1}^-,B_{n-1}^+)+3,d(B_{n-2}^-,B_{n-2}^+)+6\})\mbox{.}
\end{equation}

The proof consists in finding a geodesic between $B_n^-$ and $B_n^+$ within which at least a certain number of flips (typically three) are incident to given arcs. Using Theorem \ref{Atheorem.2.3} with well chosen vertex deletions will then result in (\ref{Aequation.5.1}). These deletions will be the same as in the case of triangulations $A_n^-$ and $A_n^+$. Observe in particular that, when $n\geq2$, the same vertex relabeling sends $B_n^-\contract{n}$ and $B_n^+\contract{n}$ to respectively $B_{n-1}^-$ and $B_{n-1}^+$.
\begin{figure}
\begin{centering}
\includegraphics{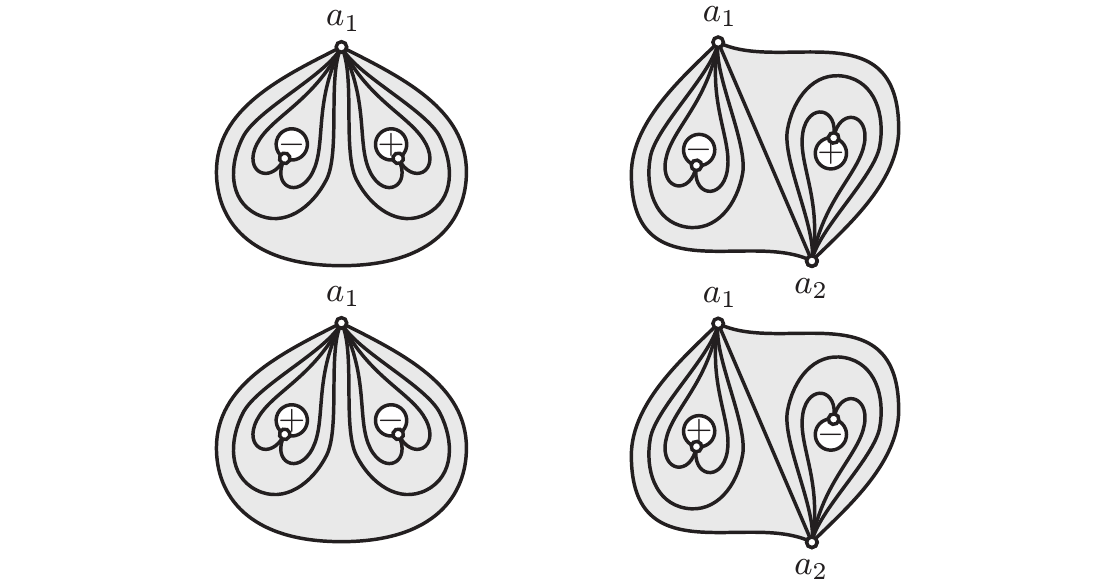}
\caption{Triangulations $B_n^-$ (top row) and $B_n^+$ (bottom row) depicted when $n=1$ (left) and when $n=2$ (right). Vertices $a_-$ and $a_+$ are respectively labeled $-$ and $+$.}\label{Afigure.5.2}
\end{centering}
\end{figure}
Moreover, if $n\geq3$ and if $i$ and $j$ are two integers so that $1\leq{i}<n$ and $j\in\{n-i,n-i+1\}$, then another vertex relabeling sends $B_n^-\contract{i}\contract{j}$ and $B_n^+\contract{i}\contract{j}$ to respectively $B_{n-2}^-$ and $B_{n-2}^+$.

\subsection{When an ear is found along a geodesic}

In this subsection, geodesics between $B_n^-$ and $B_n^+$ along which some triangulation has an ear are considered. Ears in $a_1$ and in $a_n$ are first reviewed separately. The following lemma deals with the case of an ear in $a_1$. Note that, by symmetry, this also settles the case of an ear in $a_{\lfloor{n/2}\rfloor+1}$.

\begin{lemma}\label{Alemma.5.1}
Let $n$ be an integer greater than $1$ and $(T_i)_{0\leq{i}\leq{k}}$ a geodesic between $B_n^-$ and $B_n^+$. If there exists an integer $j$ so that $0\leq{j}\leq{k}$ and $T_j$ has an ear in $a_1$, then
$$
d(B_n^-,B_n^+)\geq{d(B_{n-1}^-,B_{n-1}^+)+4}\mbox{.}
$$
\end{lemma}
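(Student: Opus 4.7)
The plan is to split the given geodesic $(T_i)_{0\leq i\leq k}$ at $T_j$ into two sub-geodesics, one from $B_n^-$ to $T_j$ of length $j$, and one from $T_j$ to $B_n^+$ of length $k-j$. Since $T_j$ has an ear in $a_1$, the two boundary arcs bounding this ear are $\alpha_n$ and $\alpha_1$. I would first verify from Figure~\ref{Afigure.5.1} that in both $B_n^-$ and $B_n^+$, the triangles incident to $\alpha_n$ and to $\alpha_1$ do not share a common edge: in both triangulations these two boundary arcs are separated by interior arcs enclosing the topology near $a_-$ or $a_+$. This check makes Lemma~\ref{Alemma.2.75} applicable to each sub-geodesic with $p = n$ and $q = 1$, producing at least two flips incident to $\alpha_r$ for some $r \in \{1, n\}$ along each sub-geodesic.

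The goal is then to collect these into four flips incident to a single boundary arc so that Theorem~\ref{Atheorem.2.3} delivers the stated bound. The cleanest case is when both sub-geodesics produce $r = n$: four flips along the full geodesic are then incident to $\alpha_n$, and since the common vertex relabeling sends $B_n^\pm\contract{n}$ to $B_{n-1}^\pm$ (as recorded just before the statement of the lemma), Theorem~\ref{Atheorem.2.3} immediately yields $d(B_n^-, B_n^+) \geq d(B_{n-1}^-, B_{n-1}^+) + 4$.

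The main obstacle is the remaining cases, where at least one sub-geodesic forces $r = 1$. Here I would extract the finer information from the proof of Lemma~\ref{Alemma.2.75}: when $r = 1$ is the value delivered by the lemma, exactly one flip of that sub-geodesic is incident to $\alpha_n$, and this flip is simultaneously incident to $\alpha_1$, corresponding to the creation or destruction of the ear at $a_1$. Combining this with the fact that the triangles of $B_n^-$ (resp.\ of $B_n^+$) incident to $\alpha_n$ and to $\alpha_1$ are distinct from the two ear triangles of $T_j$, I would locate an additional flip incident to $\alpha_n$ elsewhere along the sub-geodesic, enough to reach a total of four flips incident to $\alpha_n$ on the full geodesic; Theorem~\ref{Atheorem.2.3} with $p = n$ then concludes. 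The symmetric sub-case where both sub-geodesics force $r = 1$ can alternatively be handled by a deletion at $a_1$, after verifying from Figure~\ref{Afigure.5.1} that the same vertex relabeling sends $B_n^\pm\contract{1}$ to $B_{n-1}^\pm$. The step I expect to absorb most of the work is pinning down these geometric configurations near $a_1$ and $a_n$ precisely enough to guarantee the extra flips in every case.
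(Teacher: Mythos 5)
There is a genuine gap. Your plan routes everything through Lemma~\ref{Alemma.2.75}, but that lemma only delivers two flips incident to $\alpha_r$ for \emph{some} $r\in\{1,n\}$ on each half of the geodesic, and you never resolve what happens when the two halves return different values of $r$, or when one or both return $r=1$. Adding two flips incident to $\alpha_1$ to two flips incident to $\alpha_n$ does not feed into Theorem~\ref{Atheorem.2.3}, which needs all four flips incident to the \emph{same} boundary arc (a single deletion). Your fallback for the $r=1$ cases — deleting $a_1$ and asserting that $B_n^{\pm}\contract{1}$ is $B_{n-1}^{\pm}$ up to relabeling — is not established anywhere in the paper and is doubtful: the only single deletion the paper verifies is at $a_n$, and deleting $a_1$ shifts the anchor of the boundary loop $\alpha_-$ and perturbs the zigzag in a way that does not obviously reproduce $B_{n-1}^{\pm}$ (already for the underlying $Z_n$, deleting $a_1$ and relabeling yields a rotated zigzag, not $Z_{n-1}$). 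Your intermediate claim that in the $r=1$ case you can ``locate an additional flip incident to $\alpha_n$ elsewhere'' is exactly the point that needs proof and is left unargued. There is also a threshold problem at the very first step: for Lemma~\ref{Alemma.2.75} you must check that the triangles of $B_n^{\pm}$ incident to $\alpha_n$ and to $\alpha_1$ share no edge, and in the pierced ear at $a_1$ these two triangles in fact meet along an interior arc of the re-triangulated ear, so the hypothesis fails and the lemma does not apply to this pair of arcs at all. (Compare Lemma~\ref{Alemma.5.2}, where the paper \emph{does} invoke Lemma~\ref{Alemma.2.75}, precisely because there the no-common-edge condition holds for $\alpha_{n-1}$ and $\alpha_n$.)

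The idea you are missing is the direct one the paper uses: the ear $t$ of $T_j$ in $a_1$ and the triangle $t^-$ of $B_n^-$ incident to $\alpha_n$ share only the edge $\alpha_n$, because both remaining edges of $t^-$ are interior arcs of the pierced ear. Hence a single flip cannot replace $t^-$ by $t$ (it would have to remove two arcs at once), so at least two of the first $j$ flips are incident to $\alpha_n$; the symmetric statement for $B_n^+$ gives at least two more among the last $k-j$ flips. All four flips are incident to the same arc $\alpha_n$, and Theorem~\ref{Atheorem.2.3} together with $B_n^{\pm}\contract{n}\cong B_{n-1}^{\pm}$ (the deletion that \emph{is} verified) gives the bound. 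No case analysis on $r$ and no deletion at $a_1$ are needed.
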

\begin{proof}
Assume that $T_j$ has an ear in $a_1$ for some integer $j\in\{1, ..., k\}$. Call this ear $t$, and call $t^-$ the triangle incident to $\alpha_n$ in $B_n^-$. At least two of the first $j$ flips along $(T_i)_{0\leq{i}\leq{k}}$ must be incident to $\alpha_n$. Indeed, the unique such flip would otherwise replace triangle $t^-$ by $t$. This flip would then simultaneously remove two edges of $t^-$ (see the sketch of $B_n^-$ on the left Fig. \ref{Afigure.5.1}), which is impossible. By symmetry, at least two of the last $k-j$ flips along path $(T_i)_{0\leq{i}\leq{k}}$ must be incident to $\alpha_n$. Hence, at least four such flips are found along $(T_i)_{0\leq{i}\leq{k}}$, and Theorem \ref{Atheorem.2.3} yields
$$
d(B_n^-,B_n^+)\geq{d(B_n^-\contract{n},B_n^+\contract{n})+4}\mbox{.}
$$

Since an isomorphism sends $B_n^-\contract{n}$ and $B_n^+\contract{n}$ to $B_{n-1}^-$ and $B_{n-1}^+$ via the same vertex relabeling, the lemma is proven.
\end{proof}

The next lemma deals with the case of an an ear in $a_n$. By symmetry this also settles the case of an ear in $a_{n/2}$ when $n$ is even and in $a_{\lceil{n/2}\rceil+1}$ when $n$ is odd.
 
\begin{lemma}\label{Alemma.5.2}
Let $n$ be an integer greater than $2$ and $(T_i)_{0\leq{i}\leq{k}}$ a geodesic between $B_n^-$ and $B_n^+$. If there exists an integer $j$ so that $0\leq{j}\leq{k}$ and $T_j$ has an ear in $a_n$, then
$$
d(B_n^-,B_n^+)\geq\min(\{d(B_{n-1}^-,B_{n-1}^+)+3,d(B_{n-2}^-,B_{n-2}^+)+6\})\mbox{.}
$$
\end{lemma}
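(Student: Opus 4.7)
The plan is a case analysis on $f_n$, the number of flips along $(T_i)_{0 \leq i \leq k}$ incident to $\alpha_n$. The structure parallels Lemma~\ref{Alemma.5.1}, but here the ear at $a_n$ touches both boundary arcs $\alpha_{n-1}$ and $\alpha_n$, which is what creates the dichotomy reflected in the two branches of the min.

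If $f_n \geq 3$, Theorem~\ref{Atheorem.2.3} applied with $p = n$ gives at once
\[
d(B_n^-, B_n^+) \geq d(B_n^- \contract{n}, B_n^+ \contract{n}) + 3 = d(B_{n-1}^-, B_{n-1}^+) + 3,
\]
using the isomorphism $B_n^\pm \contract{n} \cong B_{n-1}^\pm$ recalled at the start of the section. This gives the first branch of the min.

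Assume now that $f_n \leq 2$. I plan to produce four flips incident to $\alpha_{n-1}$ as follows. A direct inspection of Fig.~\ref{Afigure.5.1}, in both parities of $n$, shows that in each of $B_n^-$ and $B_n^+$ the triangles incident to $\alpha_{n-1}$ and to $\alpha_n$ do not share an edge. Since $T_j$ has an ear in $a_n$, Lemma~\ref{Alemma.2.75} applies to each of the subpaths $(T_i)_{0\leq i\leq j}$ and $(T_i)_{j\leq i\leq k}$ with $p=n-1$ and $q=n$, producing in each subpath an index $r\in\{n-1,n\}$ along which at least two flips are incident to $\alpha_r$. Because any flip that changes the triangle at $\alpha_n$ has $\alpha_n$ as a boundary edge of its quadrilateral and is therefore incident to $\alpha_n$ (by Proposition~\ref{Aproposition.2.0} and the argument around it), each subpath contains at least one flip incident to $\alpha_n$, since $T_j$ and $B_n^\pm$ disagree at $\alpha_n$. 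Combined with $f_n \leq 2$, this forbids $r = n$ in either subpath, so both applications of Lemma~\ref{Alemma.2.75} yield flips incident to $\alpha_{n-1}$, giving at least four such flips along the geodesic. Theorem~\ref{Atheorem.2.3} with $p = n-1$ then yields
\[
d(B_n^-, B_n^+) \geq d(B_n^- \contract{n-1}, B_n^+ \contract{n-1}) + 4.
\]

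The main obstacle is the remaining step. To reach the second branch of the min, I still need two further flips incident to $\alpha_1$ or to $\alpha_2$ along any geodesic from $B_n^- \contract{n-1}$ to $B_n^+ \contract{n-1}$; a second application of Theorem~\ref{Atheorem.2.3}, together with the iterated-deletion isomorphism $B_n^\pm \contract{n-1}\contract{j} \cong B_{n-2}^\pm$ for $j\in\{1,2\}$ recalled at the start of the section, will then deliver the bound $d(B_{n-2}^-, B_{n-2}^+) + 6$. One flip is automatic, because the swap of the loops $\alpha_-$ and $\alpha_+$ near $a_1$ between $B_n^-$ and $B_n^+$ survives the (far away) deletion at $a_{n-1}$, forcing the triangles at $\alpha_1$ to differ. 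Upgrading to two flips I plan to carry out by an ear-obstruction argument in the spirit of Lemma~\ref{Alemma.5.1}: verify from Fig.~\ref{Afigure.5.1} that any single flip converting the triangle of $B_n^-\contract{n-1}$ at $\alpha_1$ into the triangle of $B_n^+\contract{n-1}$ at $\alpha_1$ would be forced to remove two of its edges at once, which is impossible. Pinning down this impossibility cleanly in both parity cases is the delicate step of the proof.
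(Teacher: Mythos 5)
Your proposal follows essentially the same route as the paper's proof: the same dichotomy (three flips at $\alpha_n$ giving the first branch, otherwise Lemma \ref{Alemma.2.75} forcing four flips at $\alpha_{n-1}$, then two more at $\alpha_1$ before concluding via the iterated deletion $B_n^\pm\contract{n-1}\contract{1}\cong B_{n-2}^\pm$). The final step you flag as delicate is exactly the paper's argument: the triangles incident to $\alpha_1$ in $B_n^-\contract{n-1}$ and $B_n^+\contract{n-1}$ separate the two boundary loops in opposite ways, so a single flip exchanging them would have to remove two edges at once (Fig.\ \ref{Afigure.5.3}), which is impossible.
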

\begin{proof}
Assume that $T_j$ has an ear in $a_n$ for some integer $j\in\{1, ..., k\}$. One can see in Fig. \ref{Afigure.5.1} that the triangles of $B_n^-$ incident to arcs $\alpha_{n-1}$ and $\alpha_n$ do not have a common edge. Therefore, it follows from Lemma \ref{Alemma.2.75} that at least two of the first $j$ flips along $(T_i)_{0\leq{i}\leq{k}}$ are incident to $\alpha_r$ for some $r\in\{n-1,n\}$. By symmetry, the triangles of $B_n^+$ incident to arcs $\alpha_{n-1}$ and $\alpha_n$ do not have a common edge, and according to  the same lemma, at least two of the last $k-j$ flips along $(T_i)_{0\leq{i}\leq{k}}$ are incident to $\alpha_s$ for some $s\in\{n-1,n\}$.

Since the triangles incident to $\alpha_n$ in $B_n^-$ and in $B_n^+$ are distinct from the ear in $a_n$, at least one of the first $j$ flips and at least one of the last $k-j$ flips along $(T_i)_{0\leq{i}\leq{k}}$ are incident to $\alpha_n$. Hence, if $r$ or $s$ is equal to $n$, then at least three flips along this geodesic are incident to $\alpha_n$. In this case, the desired result follows from Theorem \ref{Atheorem.2.3} because $B_n^-\contract{n}$ and $B_n^+\contract{n}$ are isomorphic to respectively $B_{n-1}^-$ and $B_{n-1}^+$ via the same vertex relabeling.

Now assume that $r$ and $s$ are both equal to $n-1$. In this case, at least four flips along path $(T_i)_{0\leq{i}\leq{k}}$ are incident to $\alpha_{n-1}$ and Theorem \ref{Atheorem.2.3} yields:
\begin{equation}\label{Aequation.5.2}
d(B_n^-,B_n^+)\geq{d(B_n^-\contract{n-1},B_n^+\contract{n-1})+4}\mbox{.}
\end{equation}

Denote by $t^-$ and $t^+$ the triangles incident to edge $\alpha_1$ in respectively $B_n^-{\contract}n-1$ and $B_n^+{\contract}n-1$. One can see using Fig. \ref{Afigure.5.1} that these two triangles separate the two boundary loops in opposite ways.
\begin{figure}
\begin{centering}
\includegraphics{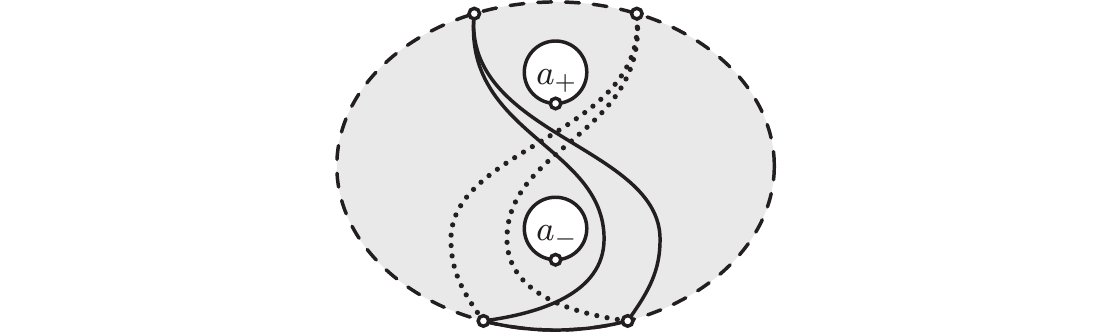}
\caption{No flip can replace triangle $t^-$ (solid lines) by triangle $t^+$ (dotted lines) because a such flip would simultaneously remove two edges of $t^-$.}\label{Afigure.5.3}
\end{centering}
\end{figure}
As shown in Fig. \ref{Afigure.5.3}, a single flip cannot exchange $t^-$ and $t^+$. Hence, at least two flips are incident to $\alpha_1$ along any geodesic between $B_n^-{\contract}n-1$ and $B_n^+{\contract}n-1$, and according to Theorem \ref{Atheorem.2.3},
\begin{equation}\label{Aequation.5.3}
d(B_n^-\contract{n-1},B_n^+\contract{n-1})\geq{d(B_n^-\contract{n-1}\contract{1},B_n^+\contract{n-1}\contract{1})+2}\mbox{.}
\end{equation}

Since $B_n^-{\contract}n-1\contract{1}$ and $B_n^+{\contract}n-1\contract{1}$ are isomorphic to respectively $B_{n-2}^-$ and $B_{n-2}^+$ via the same vertex relabeling, combining (\ref{Aequation.5.2}) with (\ref{Aequation.5.3}) completes the proof.
\end{proof}

Using the last two lemmas, one finds that if $n\geq3$ and if some triangulation along any geodesic between $B_n^-$ and $B_n^+$ has an ear, then the desired inequality holds.

\begin{theorem}\label{Atheorem.5.1}
Let $n$ be an integer greater than $2$ and $(T_i)_{0\leq{i}\leq{k}}$ a geodesic beteween $B_n^-$ and $B_n^+$. If there exists an integer $j$ so that $0\leq{j}\leq{k}$ and $T_j$ has an ear, then
$$
d(B_n^-,B_n^+)\geq\min(\{d(B_{n-1}^-,B_{n-1}^+)+3,d(B_{n-2}^-,B_{n-2}^+)+6\})\mbox{.}
$$
\end{theorem}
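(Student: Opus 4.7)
The plan is to reduce the hypothesis of an arbitrary ear in $T_j$ to the two explicit ear locations already handled by Lemmas~\ref{Alemma.5.1} and~\ref{Alemma.5.2}, and then to dispatch the remaining generic ear locations by a direct double-deletion argument. Let $a_q$ denote the vertex of the ear of $T_j$, so that its two boundary edges are $\alpha_{q-1}$ and $\alpha_q$.

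The first step is to record the symmetries of the configuration $(B_n^-,B_n^+)$. Inspection of Figs.~\ref{Afigure.5.1} and~\ref{Afigure.5.2} shows that $B_n^-$ and $B_n^+$ are exchanged by a self-homeomorphism of $\Pi_n$ reversing the cyclic order of the privileged boundary vertices (up to a parity-dependent shift). Under this symmetry, $a_1$ and $a_{\lfloor n/2\rfloor+1}$ swap roles, and so do $a_n$ and $a_{n/2}$ (for $n$ even) or $a_n$ and $a_{\lceil n/2\rceil+1}$ (for $n$ odd). Hence Lemma~\ref{Alemma.5.1} applies whenever $q\in\{1,\lfloor n/2\rfloor+1\}$, yielding the strictly stronger bound $d(B_n^-,B_n^+)\geq d(B_{n-1}^-,B_{n-1}^+)+4$, and Lemma~\ref{Alemma.5.2} applies whenever $q\in\{n,n/2\}$ (resp.\ $\{n,\lceil n/2\rceil+1\}$), giving exactly the required inequality.

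For the remaining values of $q$, the zigzag structure underlying $B_n^\pm$ places ears only at the four distinguished vertices above, so the triangles of $B_n^-$ and of $B_n^+$ incident to $\alpha_{q-1}$ and to $\alpha_q$ are all distinct from the ear of $T_j$ at $a_q$. Applying Lemma~\ref{Alemma.2.75} separately to $(T_i)_{0\leq i\leq j}$ and $(T_i)_{j\leq i\leq k}$ then produces at least two flips on each half incident to some arc in $\{\alpha_{q-1},\alpha_q\}$. When both halves concentrate their flips on a common arc $\alpha_r$, Theorem~\ref{Atheorem.2.3} applied with the deletion $\contract r$, together with a further flip on the reflected arc $\alpha_s$ (with $s=n+1-q$) that is forced by the distinctness of the triangles of $B_n^\pm\contract r$ incident to $\alpha_s$, leads to
\[
d(B_n^-,B_n^+)\geq d(B_{n-2}^-,B_{n-2}^+)+6,
\]
after checking that $B_n^\pm\contract r\contract s$ are isomorphic to $B_{n-2}^\pm$ via the same vertex relabeling. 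The split case, in which one half contributes its two flips on $\alpha_{q-1}$ and the other on $\alpha_q$, is handled identically by taking $r=q-1$ and $s=q$ and iterating Theorem~\ref{Atheorem.2.3}.

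The main obstacle is this last step: for every generic $q$, one must verify both that the relevant triangles of $B_n^\pm$ genuinely differ from the ear of $T_j$ (so that Lemma~\ref{Alemma.2.75} can be invoked on each half), and that the two successive deletions produce triangulations isomorphic to $B_{n-2}^\pm$ through a single common relabeling. This amounts to a finite combinatorial check along the zigzag backbone of $B_n^\pm$, but it is where the technical content of the argument will reside.
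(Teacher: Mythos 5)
Your overall architecture matches the paper's: handle ears at the four distinguished vertices via Lemmas \ref{Alemma.5.1} and \ref{Alemma.5.2} plus symmetry, then treat a generic ear at $a_q$ by cutting the geodesic at $T_j$, invoking Lemma \ref{Alemma.2.75} on each half, and finishing with a double deletion. However, the generic case as you have written it has two genuine gaps.

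First, in the ``concentrated'' case where at least $4$ flips are incident to a single arc $\alpha_r$, you force only \emph{one} further flip on the mirror arc $\alpha_s$ from the distinctness of the triangles of $B_n^{\pm}\contract{r}$ incident to $\alpha_s$. That yields $4+1=5$, not the $6$ you claim. Mere distinctness of triangles is the argument that sufficed for the $+5$ recursion in the $\Gamma$ case (Lemma \ref{Alemma.4.3}); here you need \emph{two} flips on $\alpha_s$, and the paper gets them from a stronger observation: the triangles incident to $\alpha_s$ in $B_n^-\contract{r}$ and $B_n^+\contract{r}$ separate the two boundary loops in opposite ways, and (Fig.\ \ref{Afigure.5.3}) no single flip can exchange two such triangles, because it would have to remove two edges of one of them simultaneously. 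This two-boundary-loop phenomenon is precisely what makes $\Pi$ harder than $\Gamma$, and your write-up does not use it.

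Second, the ``split'' case (exactly three flips on each of $\alpha_{q-1}$ and $\alpha_q$) cannot be ``handled identically by taking $r=q-1$ and $s=q$ and iterating Theorem \ref{Atheorem.2.3}.'' The double deletions that return $B_{n-2}^{\pm}$ are only the mirror pairs $\contract{i}\contract{j}$ with $j\in\{n-i,n-i+1\}$; deleting the two \emph{adjacent} arcs $\alpha_{q-1}$ and $\alpha_q$ destroys the zigzag/loop structure and does not produce $B_{n-2}^{\pm}$. Moreover, even with a legitimate pair of arcs, you cannot iterate Theorem \ref{Atheorem.2.3} naively: after the first contraction the induced path is in general no longer a geodesic, so the count of flips incident to the second arc along the original geodesic does not transfer. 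The paper's treatment of this case is substantially more delicate: it identifies the specific flips incident to $\alpha_{p-1}$ and $\alpha_p$, cuts the geodesic at the intermediate triangulation $T_l$ where the first flip incident to $\alpha_p$ occurs, applies Theorem \ref{Atheorem.2.3} to each portion, and then forces one flip on the mirror arc $\alpha_q$ along the first portion and two along the second (again via the opposite-separation argument), assembling $3+1+2=6$ by the triangle inequality. Your proposal, as it stands, does not recover the $+6$ in either subcase.
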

\begin{proof}
Consider an integer $j$ so that $0\leq{j}\leq{k}$ and assume that $T_j$ has an ear in vertex $a_p$ where $1\leq{p}\leq{n}$. If $p\in\{1,n\}$, then the desired result follows from Lemma \ref{Alemma.5.1} or from Lemma \ref{Alemma.5.2}. If $q\in\{\lceil{n/2}\rceil,\lceil{n/2}\rceil+1\}$, these two lemmas also provide the desired result because of the symmetries of $B_n^-$ and $B_n^+$. It is assumed in the remainder of the proof that $p$ does not belong to $\{1,\lceil{n/2}\rceil,\lceil{n/2}\rceil+1,n\}$.

Denote $q=n-p+1$. The portion of triangulation $B_n^-$ placed between edges $\alpha_{p-1}$, $\alpha_p$ and $\alpha_q$ is depicted on the left of Figure \ref{Afigure.5.4}. Note that, if one cuts geodesic $(T_i)_{0\leq{i}\leq{k}}$ at triangulation $T_j$, then Lemma \ref{Alemma.2.75} can be invoked for each of the resulting portions. Doing so, we find that either $\alpha_{p-1}$ and $\alpha_p$ are both incident to exactly $3$ flips along this geodesic or one of these arcs is incident to at least $4$ flips along it.

First assume that at least $4$ flips are incident to $\alpha_r$ along $(T_i)_{0\leq{i}\leq{k}}$, where $r$ is equal to $p-1$ or to $p$. Denote by $t^-$ and $t^+$ the triangles incident to arc $\alpha_q$ in respectively $B_n^-{\contract}r$ and $B_n^+{\contract}r$. One can see using Fig. \ref{Afigure.5.1} that these two triangles separate the two boundary loops in opposite ways. As shown in Fig. \ref{Afigure.5.3}, a single flip cannot exchange $t^-$ and $t^+$. Hence, at least two flips are incident to $\alpha_q$ along any geodesic between $B_n^-{\contract}r$ and $B_n^+{\contract}r$. Hence, invoking Theorem \ref{Atheorem.2.3} twice yields
$$
d(B_n^-,B_n^+)\geq{d(B_n^-\contract{r}\contract{q},B_n^+\contract{r}\contract{q})+6}\mbox{.}
$$
\begin{figure}
\begin{centering}
\includegraphics{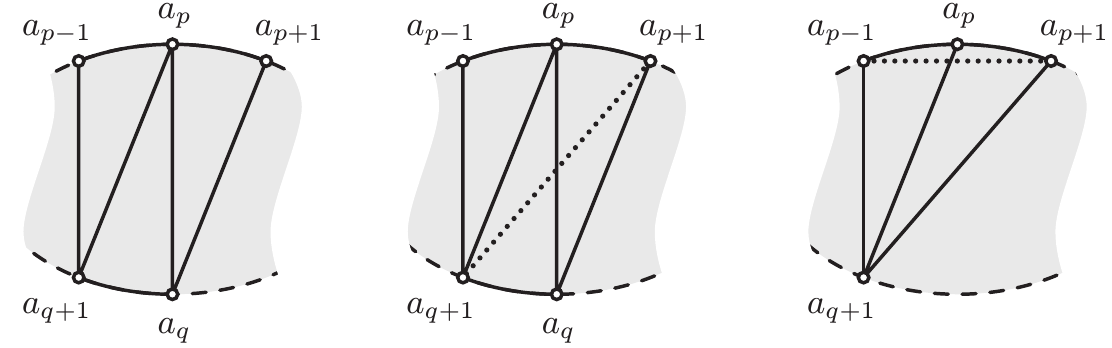}
\caption{The portion of triangulation $B_n^-$ placed between arcs $\alpha_{p-1}$, $\alpha_p$, and $\alpha_q$ (left), and the $i$-th flip along geodesic $(T_i)_{0\leq{i}\leq{k}}$ used in the proof of Theorem \ref{Atheorem.5.1}, with $i=l$ (center) and $i=j$ (right). Triangulation $T_{i-1}$ is shown in solid lines and the introduced edge is dotted.}\label{Afigure.5.4}
\end{centering}
\end{figure}

Since $B_n^-{\contract}r\contract{q}$ and $B_n^+{\contract}r\contract{q}$ are isomorphic to respectively $B_{n-2}^-$ and $B_{n-2}^+$ via the same vertex relabeling, the theorem is proven in this case.

Now assume that exactly three flips are incident to either $\alpha_{p-1}$ and $\alpha_p$ along $(T_i)_{0\leq{i}\leq{k}}$. Note that at least one of the first $j$ flips and at least one of the last $k-j$ flips along $(T_i)_{0\leq{i}\leq{k}}$ must be incident to either of these edges. Thanks to the symmetries of $B_n^-$ and $B_n^+$, we can assume without loss of generality that exactly one of the first $j$ flips and two of the last $k-j$ along $(T_i)_{0\leq{i}\leq{k}}$ are incident to $\alpha_{p-1}$. Then, by Lemma \ref{Alemma.2.75}, exactly two of the first $j$ flips and exactly one of the last $k-j$ along $(T_i)_{0\leq{i}\leq{k}}$ are incident to $\alpha_p$.

It is further assumed without loss of generality that the $j$-th flip along $(T_i)_{0\leq{i}\leq{k}}$ introduces the ear in $a_p$. This flip is then both the first flip incident to $\alpha_{p-1}$ and the second flip incident to $\alpha_p$ along the geodesic. In particular, this flip must replace the triangle of $B_n^-$ incident to $\alpha_{p-1}$ by the ear in $a_p$, as shown in the right of Fig. \ref{Afigure.5.4}. Now assume that the first flip incident to $\alpha_p$ along $(T_i)_{0\leq{i}\leq{k}}$ is the $l$-th one. Since there is no other such flip among the first $j-1$ flips along the geodesic, this flip must be the one shown in the center of Fig. \ref{Afigure.5.4}.

Consider a geodesic $(T'_i)_{0\leq{i}\leq{k'}}$ from $B_n^-\contract{p-1}$ to $T_l\contract{p-1}$, and a geodesic $(T''_i)_{j\leq{i}\leq{k''}}$ from $T_l\contract{p-1}$ to $B_n^+\contract{p-1}$. Since three flips are incident to $\alpha_{p-1}$ along $(T_i)_{0\leq{i}\leq{k}}$, it follows from Theorem \ref{Atheorem.2.3} that
\begin{equation}\label{Aequation.5.Th1}
k'+k''\leq{d(B_n^-,B_n^+)-3}\mbox{.}
\end{equation}

Observe that the triangles incident to $\alpha_q$ in $B_n^-\contract{p-1}$ and in $T_l\contract{p-1}$ are distinct. Hence, at least one flip is incident to $\alpha_q$ along $(T'_i)_{0\leq{i}\leq{k'}}$ and by Theorem \ref{Atheorem.2.3},
\begin{equation}\label{Aequation.5.4}
k'\geq{d(B_n^-\contract{p-1}\contract{q},T_l\contract{p-1}\contract{q})+1}\mbox{.}
\end{equation}

Now denote by $t^-$ and $t^+$ the triangles incident to edge $\alpha_q$ in respectively $T_l{\contract}p-1$ and $B_n^+{\contract}p-1$. By construction $t^-$ and $t^+$ separate the two boundary loops in opposite ways. As shown in Fig. \ref{Afigure.5.3}, a single flip cannot exchange $t^-$ and $t^+$. Hence, at least two flips are incident to $\alpha_q$ along $(T''_i)_{j\leq{i}\leq{k''}}$, and Theorem \ref{Atheorem.2.3} yields
\begin{equation}\label{Aequation.5.5}
k''\geq{d(T_l\contract{p-1}\contract{q},B_n^+\contract{p-1}\contract{q})+2}\mbox{.}
\end{equation}

By the triangle inequality, (\ref{Aequation.5.4}) and (\ref{Aequation.5.5}) yield
\begin{equation}\label{Aequation.5.Th2}
k'+k''\geq{d(B_n^-\contract{p-1}\contract{q},B_n^+\contract{p-1}\contract{q})+3}\mbox{.}
\end{equation}

Since $B_n^-\contract{s}\contract{q}$ and $B_n^+\contract{s}\contract{q}$ are isomorphic to $B_{n-2}^-$ and $B_{n-2}^+$ by the same vertex relabeling, the desired inequality is obtained combining (\ref{Aequation.5.Th1}) and (\ref{Aequation.5.Th2}).
\end{proof}

\subsection{When no ear is found along a geodesic}

We call a geodesic between $B_n^-$ and $B_n^+$ \emph{earless} if none of the triangulations along this geodesic has an ear. We will first show that under mild conditions, one always finds two particular triangulations along any such geodesics. These triangulations are sketched in Fig. \ref{Afigure.5.6}. The triangulation shown in the top row of this figure will be called $C_n^-(p)$. Note that $a_p$ is the privileged boundary vertex that is also a vertex of the triangle of $C_n^-(p)$ incident to arc $\alpha_+$. Further note that $C_n^-(p)$ is sketched separately when $p>\lceil{n/2}\rceil$ (left) and when $p\leq\lceil{n/2}\rceil$ (right). 
\begin{figure}
\begin{centering}
\includegraphics{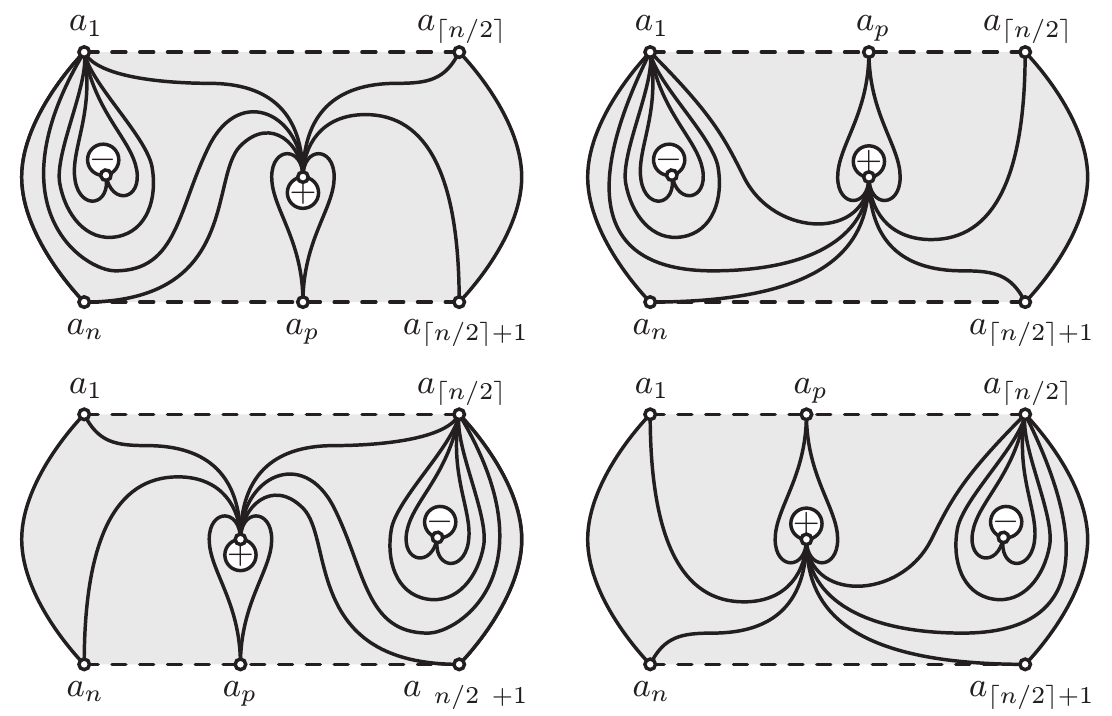}
\caption{Sketch of $C_n^-(p)$ (top) and $C_n^+(p)$ (bottom) when $p>\lceil{n/2}\rceil$ (left) and when $p\leq\lceil{n/2}\rceil$ (right). Not all the interior edges of these triangulations are shown. The omitted edges connect privileged boundary vertices to $a_+$.}\label{Afigure.5.6}
\end{centering}
\end{figure}
The triangulation shown in the bottom row of Fig. \ref{Afigure.5.6}, called $C_n^+(p)$ has a similar structure. In particular, $a_p$ is the vertex on the privileged boundary that is also a vertex of the triangle of $C_n^+(p)$ incident to arc $\alpha_+$. 

Observe that triangulations $C_n^-(p)$ and $C_n^+(p)$ do not have an ear. In fact, if at most two flips are incident to either $\alpha_n$ and $\alpha_{\lceil{n/2}\rceil}$ along an earless geodesic between $B_n^-$ and $B_n^+$, then these two triangulations are necessarily both found along this geodesic for appropriate values of $p$.

In order to prove this, the following lemma is needed:

\begin{lemma}\label{Alemma.5.3}
Let $n$ be an integer greater than $2$. If at most $2$ flips are incident to $\alpha_n$ along an earless geodesic from $B_n^-$ to $B_n^+$, then the first flip incident to $\alpha_n$ along this geodesic either introduces an arc with vertices $a_-$ and $a_n$ or an arc with vertices $a_1$ and $a_+$.
\end{lemma}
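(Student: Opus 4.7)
The plan is to classify the first flip incident to $\alpha_n$ by the arc it introduces, following the same scheme as Lemmas \ref{Alemma.4.1}--\ref{Alemma.4.3} in the $\Gamma$-case. The starting point is that the triangle $t_0$ incident to $\alpha_n$ is unchanged up to the moment of the first incident flip: any earlier flip that modified $t_0$ would change $B_n^-\contract{n}$ and would itself be incident to $\alpha_n$. From the construction of $B_n^-$ (piercing the ear of $Z_n$ at $a_1$, which is untouched by the subsequent piercing near $a_{\lfloor n/2\rfloor+1}$), triangle $t_0$ has vertices $a_n, a_1, a_-$ and non-boundary edges $(a_n, a_-)$ and $(a_1, a_-)$.

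The first incident flip is therefore either a local flip at $t_0$, flipping one of its two non-boundary edges, or a non-local flip whose deletion-invariance arises through a mapping class group coincidence in $\MF(\Pi_{n-1})$, in the spirit of Proposition \ref{Aproposition.2.1}. For the local sub-case, I would read off the adjacent triangles directly from the retriangulation shown in Fig.\ \ref{Afigure.5.1}. Flipping $(a_1, a_-)$ goes across the degenerate loop triangle at $a_-$ and produces a new diagonal connecting $a_n$ to the ``other'' corner at $a_-$, i.e.\ an arc from $a_n$ to $a_-$ winding around $\alpha_-$; this is exactly the first option of the lemma. Flipping $(a_n, a_-)$ produces a new diagonal whose endpoints are $a_1$ and the third corner of the neighbouring triangle in $B_n^-$; a careful reading of Fig.\ \ref{Afigure.5.1} is needed to determine which vertex this is and to verify that, under the earless hypothesis, the only way this flip can occur consistently with the subsequent geodesic is the configuration producing an arc with endpoints $a_1$ and $a_+$.

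For the non-local case, the flip leaves $t_0$ intact but becomes incident to $\alpha_n$ after deletion. Here I would combine the hypothesis of at most two flips incident to $\alpha_n$ with Lemma \ref{Alemma.2.75} applied at the boundary arcs $\alpha_{n-1}$ and $\alpha_1$ adjacent to $\alpha_n$. Since the triangles incident to $\alpha_n$ in $B_n^-$ and in $B_n^+$ are distinct, at least one further incident flip must reshape $t_0$ into its $B_n^+$-form, and the bound of two incident flips then forces tight constraints on the non-local first flip. Any introduced arc outside the two allowed options would either require a third flip incident to $\alpha_n$ (contradicting the hypothesis) or force the appearance of a triangle with two privileged boundary edges along the geodesic (contradicting earlessness).

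The main obstacle I expect is the combined analysis of the $(a_n, a_-)$ local sub-case and the non-local sub-case. Both hinge on the precise combinatorics of the two-loop retriangulation of $B_n^-$ near $a_-$ and on the fact that we must simultaneously forbid the introduction of ears anywhere along the geodesic, not only at $t_0$; this will require checking several quadrilateral configurations by hand and repeatedly invoking the earless condition to discard all but the two named arcs.
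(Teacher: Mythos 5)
Your overall strategy (classify the first flip incident to $\alpha_n$ by the arc it introduces, then use earlessness and the two-flip bound to eliminate all but two options) is the same as the paper's, but the proposal has two concrete problems.

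First, you misread the local structure of $B_n^-$ at $\alpha_n$. The triangle incident to $\alpha_n$ does \emph{not} have $a_-$ as a vertex: its two non-boundary edges are a loop arc based at $a_1$ that encircles the boundary loop $\alpha_-$, and an interior arc with vertices $a_1$ and $a_n$ (this is explicit in the paper's case split, which speaks of ``the loop edge of $B_n^-$ at vertex $a_1$'' and ``the interior arc of $B_n^-$ with vertices $a_1$ and $a_n$''; it is also forced by the proofs of Lemmas \ref{Alemma.4.1} and \ref{Alemma.4.2}, where the region bounded by $\alpha_n$ and that interior arc is a copy of $\Gamma_2$). So your two ``local'' sub-cases are not flips of $(a_1,a_-)$ and $(a_n,a_-)$; they are the flip removing the loop at $a_1$, which introduces the arc $(a_-,a_n)$, and the flip removing the arc $(a_1,a_n)$, which introduces an arc $(a_1,a_p)$ with $1<p<n$ or $p=+$. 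Your appeal to Lemma \ref{Alemma.2.75} is also misplaced here, since that lemma requires an ear in one of the endpoint triangulations and the whole setting is earless.

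Second, and more importantly, the entire content of the lemma is ruling out $1<p<n$ in the second sub-case, and you leave exactly this step as ``a careful reading of Fig.~\ref{Afigure.5.1}'' and ``checking several quadrilateral configurations by hand.'' The paper's argument is: if $1<p<n$, the new arc $(a_1,a_p)$ cuts off two regions, one bounded by it together with $\alpha_p,\dots,\alpha_n$ and one bounded by it together with $\alpha_1,\dots,\alpha_{p-1}$; if either region were a disc, one of the ears of its induced triangulation would be an ear of $T_j$, contradicting earlessness, so each region must contain one of the two boundary loops. This pins the flip down to a single configuration, in which the triangle incident to $\alpha_n$ in $T_j$ separates the two boundary loops in the opposite way from the triangle incident to $\alpha_n$ in $B_n^+$; since a single flip cannot exchange two such triangles (Fig.~\ref{Afigure.5.3}), at least two of the remaining flips are incident to $\alpha_n$, giving three in total and contradicting the hypothesis. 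Without this argument (or an equivalent one) the proposal does not prove the lemma.
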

\begin{proof}
Consider a geodesic $(T_i)_{0\leq{i}\leq{k}}$ from $B_n^-$ to $B_n^+$ and assume that at most $2$ flips are incident to $\alpha_n$ along it. Further assume that the first flip incident to $\alpha_n$ along this geodesic is the $j$-th one. If this flip removes the loop edge of $B_n^-$ at vertex $a_1$, then it necessarily introduces the arc with vertices $a_-$ and $a_n$, as shown on the left of Fig. \ref{Afigure.5.7}. It is therefore assumed in the remainder of the proof that this flip removes the interior arc of $B_n^-$ with vertices $a_1$ and $a_n$. In this case, the introduced arc has vertices $a_1$ and $a_p$ where $1<p<n$ or $p=+$. It will be shown indirectly that $a_p$ is necessarily vertex $a_+$.

Assume that $1<p<n$.
\begin{figure}
\begin{centering}
\includegraphics{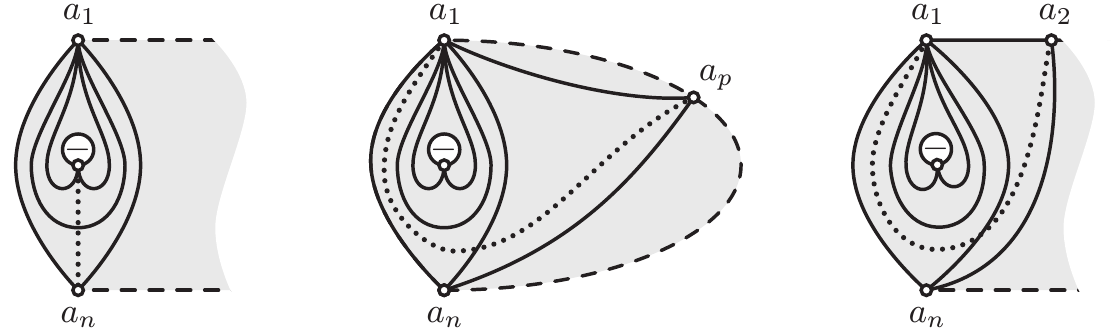}
\caption{The $j$-th flip along geodesic $(T_i)_{0\leq{i}\leq{k}}$ used in the proof of Lemma \ref{Alemma.5.3}. The arc introduced by this flip (dotted) has vertices $a_-$ and $a_n$ (left), or vertices $a_1$ and $a_p$ with $2\leq{p}<n$ (center and right).}\label{Afigure.5.7}
\end{centering}
\end{figure}
One can see in the center of Fig. \ref{Afigure.5.7} that in this case, $T_j$ induces a triangulation $U$ in the portion $\Sigma$ of $\Pi_n$ bounded by the dotted arc and by arcs $\alpha_p$, ..., $\alpha_n$. This triangulation cannot be a triangulation of a disc. Indeed, otherwise, one of the ears of $U$ would be an ear of $T_j$. This shows that the boundary loop with vertex $a_+$ must be a boundary of $\Sigma$. In this case, the $j$-th flip along $(T_i)_{0\leq{i}\leq{k}}$ must be the one shown in the right of Fig. \ref{Afigure.5.7}. Indeed, $T_j$ would otherwise induce a triangulation of a disc in the portion $\Pi_n$ bounded by arcs $\alpha_1$, ..., $\alpha_{p-1}$ and by the arc with vertices $a_1$ and $a_p$ shown in the center of the figure. This triangulation would then share one of its ears with $T_j$.

Finally, let $t^-$ and $t^+$ be the triangles incident to $\alpha_n$ in respectively $T_j$ and $B_n^+$. As the $j$-th flip along $(T_i)_{0\leq{i}\leq{k}}$ is the one shown in the right of Fig. \ref{Afigure.5.7}, $t^-$ and $t^+$ separate the two boundary loops in opposite ways. As shown in Fig. \ref{Afigure.5.3}, a single flip cannot exchange these triangles. Hence, at least two flips of the last $k-j$ flips must be incident to $\alpha_n$ along $(T_i)_{0\leq{i}\leq{k}}$, and at least three such flips are found along this geodesic, a contradiction.
\end{proof}

\begin{lemma}\label{Alemma.5.4}
For $n\geq3$, consider an earless geodesic $(T_i)_{0\leq{i}\leq{k}}$ from $B_n^-$ to $B_n^+$. If at most $2$ flips are incident to either $\alpha_n$ and $\alpha_{\lceil{n/2}\rceil}$ along this geodesic, then there exist an earless geodesic $(T'_i)_{0\leq{i}\leq{k}}$ from $B_n^-$ to $B_n^+$ and four integers $p^-$, $p^+$, $j^-$, and $j^+$ so that $j^-\leq{j^+}$,  and triangulations $T'_{j^-}$ and $T'_{j^+}$ are respectively equal to $C_n^-(p^-)$ and $C_n^+(p^+)$.
\end{lemma}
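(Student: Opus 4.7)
The plan is to apply Lemma \ref{Alemma.5.3} at both ends of the geodesic and then argue that, up to reordering commuting flips, the geodesic must pass through triangulations of the forms $C_n^-(p^-)$ and $C_n^+(p^+)$. By Lemma \ref{Alemma.5.3}, the first flip incident to $\alpha_n$ along $(T_i)_{0\leq{i}\leq{k}}$ introduces either the arc with vertices $a_-$ and $a_n$ or the arc with vertices $a_1$ and $a_+$. There is a symmetry of $\Pi_n$ that exchanges $B_n^-$ with $B_n^+$ and $\alpha_n$ with $\alpha_{\lceil n/2\rceil}$ (the reflection obtained by thinking of the triangulations through the zigzag); applied to the reversed geodesic, it yields an analogous dichotomy for the first flip incident to $\alpha_{\lceil n/2\rceil}$ when traversed from $B_n^+$.

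First I would show that there is an index $j^-$ along the geodesic such that $T_{j^-}$ already has the shape of $C_n^-(p^-)$ for some $p^-$. Starting from $B_n^-$ and tracking the evolution, the earless hypothesis combined with the at-most-two-flips constraint forces each intermediate triangulation near $a_+$ to have an interior arc connecting $a_+$ to some privileged boundary vertex $a_p$; the zigzag side of $B_n^-$ must then get ``starred'' at $a_+$. Indeed, the alternative configurations are ruled out because they would either create an ear, force a third flip incident to $\alpha_n$, or force a third flip incident to $\alpha_{\lceil n/2\rceil}$. The first triangulation meeting all the structural features of $C_n^-(p^-)$ (pod at $a_p$ enclosing $\alpha_+$ together with a full star at $a_+$ on the remaining privileged boundary vertices, and the $B_n^-$-style attachment of $\alpha_-$) is precisely $T_{j^-}$. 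A symmetric argument starting at $B_n^+$ produces an index $j^+ > j^-$ (the ordering follows because $C_n^+$ retains the $B_n^+$-style attachment of $\alpha_-$, which is only reached after the relevant flips in the middle portion) with $T_{j^+}$ of the required form $C_n^+(p^+)$.

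If the original geodesic does not pass through such triangulations, I would modify it into an equivalent earless geodesic $(T'_i)_{0\leq i\leq k}$ of the same length that does. The tool is the standard fact that two consecutive flips supported on disjoint quadrilaterals commute, so they may be reordered without changing endpoints or length. The required modifications are local: they reorganize, in each pod region and in the star-at-$a_+$ region, the flips in the order that first completes the star and builds the pod at $a_{p^-}$ before proceeding, and symmetrically near the other end. One has to check that this reordering produces no intermediate triangulation with an ear, which follows from the fact that the rearranged flips act in regions where the star-at-$a_+$ structure is already present, preventing ear-formation (a triangle with two privileged boundary edges would require an interior arc parallel to the privileged boundary, inconsistent with the star).

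The main obstacle will be the verification that the structural constraints genuinely force the full description of $C_n^-(p^-)$ and $C_n^+(p^+)$ — i.e., that all omitted interior edges must emanate from $a_+$ and that the attachment of $\alpha_-$ is of the prescribed $B_n^\mp$ type. This rigidity argument is the technical heart of the proof and relies on repeated use of Lemma \ref{Alemma.2.75} to rule out configurations in which any additional flip would become incident to $\alpha_n$ or $\alpha_{\lceil n/2\rceil}$, contradicting the hypothesis of at most two flips incident to each.
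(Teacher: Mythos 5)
Your proposal aims at the right target but has two genuine gaps, and it also takes a detour the lemma does not need. The heart of the matter is the rigidity claim: \emph{why} must some $T_{j^-}$ equal $C_n^-(p^-)$ exactly? You defer this as ``the technical heart'' but the mechanism you sketch (tracking the evolution near $a_+$, ruling out alternatives because they ``would either create an ear'' or force a third incident flip) is asserted rather than derived. The actual argument is short and specific: since at most two flips are incident to $\alpha_n$ and a single flip cannot exchange the triangles incident to $\alpha_n$ in $B_n^-$ and $B_n^+$ (they separate the two boundary loops in opposite ways), there are \emph{exactly} two such flips; the triangle $t^-$ introduced by the first one (at index $j^-$) survives until the second; Lemma \ref{Alemma.5.3} pins its vertices to $a_1$, $a_n$ and one of $a_\pm$, and the swap of the labels $a_-$, $a_+$ together with reversal of the geodesic --- this relabeled/reversed path is the new geodesic $(T'_i)$, and it is the \emph{only} modification ever made --- normalizes this to $a_+$. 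The arc from $a_1$ to $a_+$ then cuts $\Pi_n$ so that earlessness leaves exactly one way to complete $T'_{j^-}$: every remaining interior arc must run to $a_+$, which is precisely $C_n^-(p^-)$. Your proposal never isolates the triangle $t^-$ and its persistence, which is what makes the rigidity provable.

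The second gap is the fallback of reordering commuting flips. First, it is internally inconsistent with your own claim that the structure is forced; second, it is unjustified: to transpose two consecutive flips you need them supported on disjoint quadrilaterals of the \emph{same} intermediate triangulation, and you give no argument that the reorderings you need are of this local form, that they preserve earlessness, or that they preserve the counts of flips incident to $\alpha_n$ and $\alpha_{\lceil n/2\rceil}$ on which the rest of the proof of Theorem \ref{Atheorem.5.2} depends. No reordering is needed. Finally, the ordering $j^-\leq j^+$ does not follow from your ``$B_n^+$-style attachment of $\alpha_-$'' remark; it follows because the triangle incident to $\alpha_{\lceil n/2\rceil}$ in $C_n^-(p^-)$ (with vertices $a_{\lceil n/2\rceil}$, $a_{\lceil n/2\rceil+1}$, $a_+$) is present at time $j^-$ and can only be removed by the \emph{second} flip incident to $\alpha_{\lceil n/2\rceil}$, which defines $j^+$; the structure of $T'_{j^+}$ is then forced from what must already be present there ($t^-$ and the triangle of $B_n^+$ incident to $\alpha_-$) together with earlessness, not by a symmetric rerun of the first half.
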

\begin{proof}
Assume that at most $2$ flips are incident to either $\alpha_n$ and $\alpha_{\lceil{n/2}\rceil}$ along $(T_i)_{0\leq{i}\leq{k}}$. In this case, exactly $2$ flips are incident to $\alpha_n$ along this geodesic. Indeed, otherwise the unique such flip would have to remove two arcs simulatenously as shown in Fig. \ref{Afigure.5.3}. Assume that the first flip incident to $\alpha_n$ along $(T_i)_{0\leq{i}\leq{k}}$ is the $j^-$-th one.

Call $t^-$ the triangle of $T_{j^-}$ incident to $\alpha_n$. From there on, $t^-$ remains incident to $\alpha_n$  in the triangulations visited by the geodesic until the second flip incident to $\alpha_n$ removes it. Moreover, according to Lemma \ref{Alemma.5.3}, the vertices of $t^-$ are $a_1$, $a_n$, and either $a_-$ or $a_+$. Thanks to the symmetries of $B_n^-$ and $B_n^+$, one can assume that this vertex is $a_+$. Indeed, if $a_-$ is a vertex of $t^-$, then exchanging the labels of $a_-$ and $a_+$ and inversing the direction of geodesic $(T_i)_{0\leq{i}\leq{k}}$ results in a geodesic from $B_n^-$ to $B_n^+$ whose first flip introduces an arc with vertices $a_1$ and $a_+$.
\begin{figure}[b]
\begin{centering}
\includegraphics{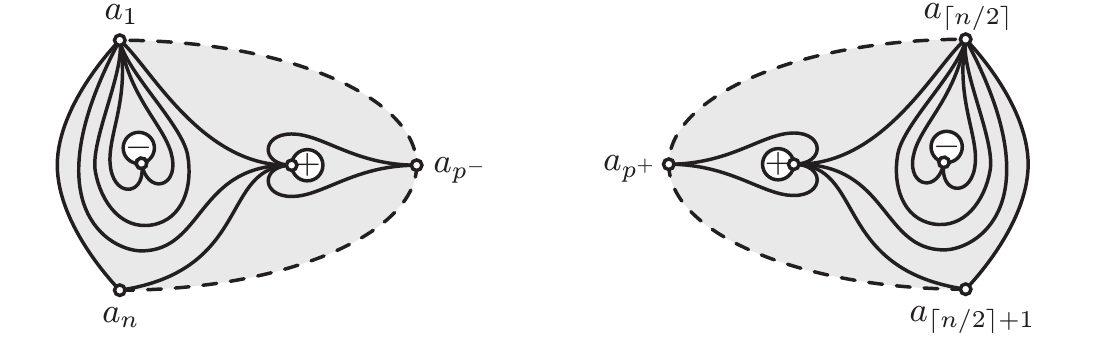}
\caption{Sketch of triangulations $T_{j^-}$ (left) and $T_{j^+}$ (right).}\label{Afigure.5.8}
\end{centering}
\end{figure}

According to this construction, triangulation $T_{j^-}$ must be as sketched on the left of Fig. \ref{Afigure.5.8}. Note in particular that the triangle incident to $\alpha_+$ is represented in this figure, the vertex of this triangle distinct from $a_+$ being privileged boundary vertex $a_{p^-}$. Moreover, not all the arc of $T_{j^-}$ are represented on the left of Fig. \ref{Afigure.5.8}. Since $T_{j^-}$ does not have an ear, then these missing arcs connect the privileged boundary vertices to $a_+$. There is only one way to place these missing arcs on the left of \ref{Afigure.5.8}. In particular $T_{j^-}$ is necessarily equal to $C_n^-(p^-)$.

Now observe that the triangle $t^+$ incident to $\alpha_{\lceil{n/2}\rceil}$ in $T_{j^-}$ (i.e. in $C_n^-(p^-)$) has vertices $a_{\lceil{n/2}\rceil}$, $a_{\lceil{n/2}\rceil+1}$, and $a_+$ (see top row in Fig. \ref{Afigure.5.6}). This triangle must be introduced by the first flip incident to $\alpha_{\lceil{n/2}\rceil}$ along $(T_i)_{0\leq{i}\leq{k}}$, and removed by the second flip incident to $\alpha_{\lceil{n/2}\rceil}$ along this geodesic. Say the latter flip transforms $T_{j^+}$ into $T_{j^++1}$. It replaces $t^+$ by the triangle incident to $\alpha_{\lceil{n/2}\rceil}$ in $B_n^+$. In particular, $T_{j^+}$ must already contain the triangle of $B_n^+$ incident to $\alpha_-$, whose vertices are $a_-$ and either $a_{\lceil{n/2}\rceil}$ or $a_{\lceil{n/2}\rceil+1}$ depending on the parity of $n$. Moreover, $t^-$ necessarily belongs to $T_{j^+}$. Indeed, the triangle of $B_n^+$ incident to $\alpha_n$ would otherwise belong to $T_{j^+}$, which is impossible because it intersects the interior of $t^+$.

Consider the portion $\Sigma$ of $\Pi_n$ bounded by arcs $\alpha_1$, ..., $\alpha_{\lceil{n/2}\rceil-1}$, by the edge of $t^-$ with vertices $a_1$ and $a_+$, and by the edge of $t^+$ with vertices $a_{\lceil{n/2}\rceil}$ and $a_+$. Observe that $\alpha_-$ is a boundary arc of $\Sigma$. Since $t^-$ and $t^+$ both belong to $T_{j^+}$, the triangle of $T_{j^+}$ incident to $\alpha_-$ necessarily admits $a_{\lceil{n/2}\rceil}$ as a vertex. In this case, $T_{j^+}$ must be as shown on the right of Fig. \ref{Afigure.5.8}. As $T_{j^+}$ does not have an ear, it is necessarily equal to $C_n^+(p^+)$ and the proof is complete.
\end{proof}
\begin{figure}[b]
\begin{centering}
\includegraphics{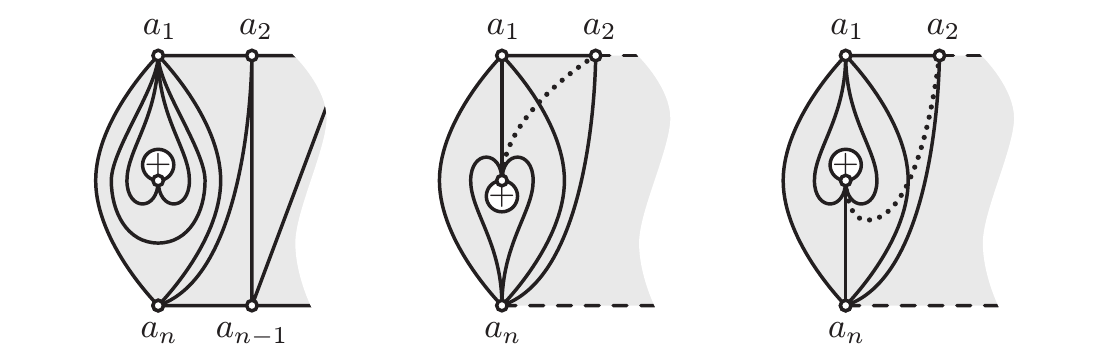}
\caption{Sketch of triangulation $B_m^+$ (left) and two possibility for the $j$-th flip along geodesic $(T_i)_{0\leq{i}\leq{k}}$ used in the proof of Lemma \ref{Alemma.5.5} (center and right). Triangulation $T_{j-1}$ is shown in solid lines and the introduced edge is dotted.}\label{Afigure.5.9}
\end{centering}
\end{figure}

\begin{lemma}\label{Alemma.5.5}
Let $n$ be an integer greater than $2$, and $p$ an integer so that $2\leq{p}\leq{n}$. If at most one flip is incident to $\alpha_1$ along some geodesic between $C_n^+(p)$ and $B_n^+$, then at least two flips are incident to $\alpha_n$ along this geodesic.
\end{lemma}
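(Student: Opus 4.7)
I would argue indirectly, in the spirit of Lemmas~\ref{Alemma.5.1}--\ref{Alemma.5.3}. Let $(T_i)_{0\leq i \leq k}$ be the geodesic between $C_n^+(p)$ and $B_n^+$, and assume it contains at most one flip incident to $\alpha_1$. A quick inspection of Fig.~\ref{Afigure.5.6} (bottom) and Fig.~\ref{Afigure.5.1} (bottom) shows that, because $p\geq 2$, the triangles of $C_n^+(p)$ and of $B_n^+$ incident to $\alpha_1$ are distinct: in $B_n^+$ this triangle has $a_+$ as a vertex (the boundary loop $\alpha_+$ sits in the pierced ear at $a_1$), whereas in $C_n^+(p)$ the triangle incident to $\alpha_+$ is anchored at $a_p\neq a_1$, which forces a different triangle at $\alpha_1$. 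Hence exactly one flip along the geodesic is incident to $\alpha_1$, call it the $j$-th flip, and it must convert the triangle of $C_n^+(p)$ at $\alpha_1$ into that of $B_n^+$ at $\alpha_1$ in a single step.

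Next I would enumerate the possible forms of this $j$-th flip. Since both the triangle being removed at $\alpha_1$ and the triangle being introduced at $\alpha_1$ are prescribed, the flip quadrilateral is determined up to a short case distinction that depends on the value of $p$ (roughly, $p=2$, $2<p\leq\lceil n/2\rceil$, and $p>\lceil n/2\rceil$, mirroring the case split in the definition of $C_n^+(p)$). For each admissible form I would write down, using the description of $T_{j-1}$ and $T_j$ near $\alpha_1$, the triangle incident to $\alpha_n$ in $T_j$.

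The key step is then to compare, for each case, the triangle of $T_j$ incident to $\alpha_n$ with the triangle of $B_n^+$ incident to $\alpha_n$, and to observe that they separate the two boundary loops $\alpha_-$ and $\alpha_+$ in opposite ways. By the impossibility argument recorded in Fig.~\ref{Afigure.5.3} (already used in Lemmas~\ref{Alemma.5.2} and~\ref{Alemma.5.3}), no single flip can exchange two such triangles, so at least two of the last $k-j$ flips along the geodesic must be incident to $\alpha_n$, which is the conclusion. In the degenerate scenario where the $j$-th flip itself happens to be incident to $\alpha_n$, I would instead argue that the triangle it produces at $\alpha_n$ still separates the loops oppositely from the triangle of $B_n^+$ at $\alpha_n$, so an additional flip incident to $\alpha_n$ is needed later, again giving two in total.

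The main obstacle will be the case analysis for the form of the unique flip at $\alpha_1$: one must verify that, in every admissible configuration and for every $p\in\{2,\dots,n\}$, the resulting triangle at $\alpha_n$ really does sit on the ``wrong side'' of both boundary loops relative to the target triangle in $B_n^+$. I expect this to reduce to checking a small number of pictures, using that the interior edges of $C_n^+(p)$ and $B_n^+$ not shown in Fig.~\ref{Afigure.5.6} and Fig.~\ref{Afigure.5.1} are all incident to $a_+$, so the position of $a_-$ relative to any candidate triangle at $\alpha_n$ is unambiguous.
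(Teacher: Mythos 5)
Your opening step is sound and matches the paper: since the triangles incident to $\alpha_1$ in the two endpoints differ, the hypothesis forces exactly one flip incident to $\alpha_1$, and that flip must exchange these two triangles in a single step. (A small factual slip, though: the triangle of $B_n^+$ incident to $\alpha_1$ is the one with vertices $a_1$, $a_2$, $a_n$, whose third edge is the interior arc $\beta$ with vertices $a_1$ and $a_n$; it is the triangle of $C_n^+(p)$ at $\alpha_1$ that has $a_+$ as a vertex. This does not affect the conclusion of the step, but it matters for what follows.)

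The genuine gap is in your key step. You propose to find \emph{both} required flips incident to $\alpha_n$ among the last $k-j$ flips by arguing that the triangles incident to $\alpha_n$ in $T_j$ and in $B_n^+$ separate the two boundary loops in opposite ways. This fails for two reasons. First, when the flip quadrilateral of the $j$-th flip does not have $\alpha_n$ as a side, the triangle of $T_j$ incident to $\alpha_n$ is inherited unchanged from $T_{j-1}$ and is therefore not determined by the local picture at $\alpha_1$: it depends on the unconstrained first $j-1$ flips. Second, and more importantly, the obstruction of Fig.~\ref{Afigure.5.3} does not apply here. After the $j$-th flip, $T_j$ contains $\beta$, and both the triangle of $T_j$ incident to $\alpha_n$ and the triangle of $B_n^+$ incident to $\alpha_n$ lie in the portion of $\Pi_n$ bounded by $\beta$ and $\alpha_n$, a copy of $\Gamma_2$ containing only the loop $\alpha_+$; these two triangles do not separate $\alpha_-$ from $\alpha_+$ in opposite ways, and indeed a single flip inside this region can turn one into the other (the flip exchanging the arc with vertices $a_n$ and $a_+$ for the loop arc at $a_1$). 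So in one of the two admissible configurations for the unique flip at $\alpha_1$, only one flip incident to $\alpha_n$ is forced after it, and your argument does not produce the second. The paper closes this case by counting on \emph{both} sides of that flip: it uses Proposition~\ref{Aproposition.2.1} to show that every flip performed inside the $\Gamma_2$-region bounded by $\beta$ and $\alpha_n$ is incident to $\alpha_n$, finds two such flips on the $B_n^+$ side in one configuration and one in the other, and in the latter case recovers the missing flip on the $C_n^+(p)$ side from the fact that the triangles incident to $\alpha_1$ and $\alpha_n$ share an edge in $C_n^+(p)$ (because $p\neq1$) but not in $T_j$. Some mechanism of this kind, locating a flip incident to $\alpha_n$ before the flip at $\alpha_1$, is unavoidable.
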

\begin{proof}
Consider a geodesic $(T_i)_{0\leq{i}\leq{k}}$ from $B_n^+$ to $C_n^+(p)$ and assume that at most one flip along this geodesic is incident to edge $\alpha_1$. In this case, there is exactly one such flip, say the $j$-th flip. Call $\beta$ the interior arc of $B_n^+$ with vertices $a_1$ and $a_n$. This arc is belong to $T_0$, ..., $T_{j-1}$ and it is removed by the flip that transforms $T_{j-1}$ into $T_j$. More precisely, this flip replaces $\beta$ by an arc with vertices $a_2$ and $a_+$. There are exactly two ways to do so, shown in Fig. \ref{Afigure.5.9}.

If the $j$-th flip along $(T_i)_{0\leq{i}\leq{k}}$ is the one shown in the center of Fig. \ref{Afigure.5.9}. Then at least two flips must have been performed within the portion $\Sigma$ of $\Pi_n$ bounded by $\beta$ and $\alpha_n$ earlier along the path (see $\mathcal{MF}(\Gamma_2)$ in Fig. \ref{Afigure.2.1}). By Proposition \ref{Aproposition.2.1}, these two flips are incident to $\alpha_n$ and the desired result holds.

If the $j$-th flip along $(T_i)_{0\leq{i}\leq{k}}$ is the one shown on the right of Fig. \ref{Afigure.5.9}. Then at least one of the earlier flips along the path modifies the triangulation within $\Sigma$. By Proposition \ref{Aproposition.2.1}, this flip is incident to $\alpha_n$. One can see on the right of Fig. \ref{Afigure.5.9} that the triangles incident to $\alpha_1$ and $\alpha_n$ in $T_j$ do not have a common edge. However, since $p$ is not equal to $1$, the triangles incident to these arcs in $C_n^+(p)$ have a common edge. Hence, at least one of the last $k-j$ flips along $(T_i)_{0\leq{i}\leq{k}}$ must be incident to $\alpha_n$, proving that at least two such flips are found along the geodesic.
\end{proof}

\begin{figure}[b]
\begin{centering}
\includegraphics{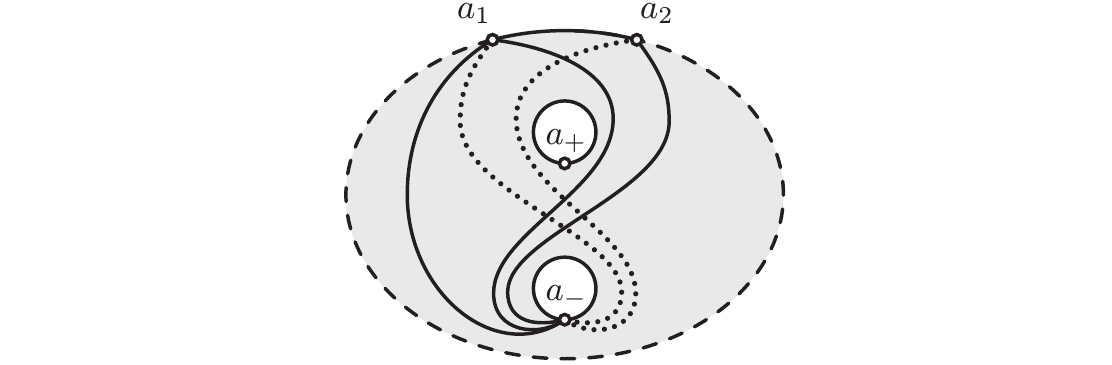}
\caption{The triangles incident to arc $\alpha_1$ in $C_n^-(\lceil{n/2}\rceil)$ (solid lines) and in $C_n^+(1)$ (dotted lines), and an edge of the triangle incident to $\alpha_n$ in these triangulations.}\label{Afigure.5.10}
\end{centering}
\end{figure}

\begin{lemma}\label{Alemma.5.7}
Let $n$ be an integer greater than $2$. If no flip is incident to $\alpha_n$ along a geodesic between $C_n^-(\lceil{n/2}\rceil)$ and $C_n^+(1)$, then at least two of its flips are incident to $\alpha_1$.
\end{lemma}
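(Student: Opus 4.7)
The plan is to argue by contradiction: suppose that along the geodesic no flip is incident to $\alpha_n$ and at most one flip is incident to $\alpha_1$.

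First I would use the hypothesis on $\alpha_n$ to conclude that the triangle incident to $\alpha_n$ is invariant throughout the geodesic. Indeed, any flip of an interior edge of that triangle is a flip of an arc having $a_n$ as an endpoint, so its quadrilateral contains $a_n$ and the flip is incident to $\alpha_n$, which is forbidden. In particular, the arc $\eta$ from $a_1$ to $a_+$ that is an edge of the $\alpha_n$-triangle---drawn as the shared edge in Fig.~\ref{Afigure.5.10}---is an arc of every triangulation along the geodesic.

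Next I would read off from Fig.~\ref{Afigure.5.6} the interior arcs at $a_1$ in the two endpoint triangulations. In $C_n^-(\lceil n/2 \rceil)$, the triangle incident to $\alpha_-$ has $a_1$ as a vertex, so $a_1$ is incident to three interior arcs: the arc $\eta$ to $a_+$ together with two arcs $\mu_1,\mu_2$ from $a_1$ to $a_-$ (the two non-boundary edges of the $\alpha_-$-triangle). In $C_n^+(1)$, it is the triangle incident to $\alpha_+$ that has $a_1$ as a vertex, so $a_1$ is incident to exactly two interior arcs, namely $\eta$ and a second arc $\gamma_2$ from $a_1$ to $a_+$ that wraps around the boundary loop $\alpha_+$. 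These two arcs appear as the $(a_1,a_+)$-edges of the solid and dotted triangles of Fig.~\ref{Afigure.5.10}.

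The core of the argument is then a counting step. A flip whose quadrilateral does not contain $a_1$ as a vertex leaves every arc having $a_1$ as an endpoint untouched; hence any change to the set of interior arcs at $a_1$ must happen across a flip that is incident to $\alpha_1$. Moreover, since a single flip exchanges exactly one arc for another, any single flip incident to $\alpha_1$ contributes at most one removal and at most one addition to the set of arcs at $a_1$. Passing from $\{\eta,\mu_1,\mu_2\}$ to $\{\eta,\gamma_2\}$ along the geodesic requires two removals (of $\mu_1$ and $\mu_2$, which cannot be $\eta$ since $\eta$ is preserved) and one addition (of $\gamma_2$). A single flip incident to $\alpha_1$ cannot realise both removals, so at least two flips along the geodesic must be incident to $\alpha_1$, contradicting the assumption.

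The main obstacle is the structural identification in the second step: confirming from Fig.~\ref{Afigure.5.6} together with the construction used in the proof of Lemma~\ref{Alemma.5.4} that in $C_n^-(\lceil n/2 \rceil)$ the triangle incident to $\alpha_-$ really has $a_1$ among its vertices (and so yields the two arcs $\mu_1,\mu_2$). Once that structural fact is in hand, the degree-tracking argument above closes the proof cleanly.
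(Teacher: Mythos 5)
Your first step is sound and is exactly the paper's: since no flip is incident to $\alpha_n$, the triangle incident to $\alpha_n$ is never modified, so its edge $\eta$ joining $a_1$ to $a_+$ belongs to every triangulation along the geodesic. The gap is in your counting step. You assert that any flip changing the set of interior arcs at $a_1$ must be incident to $\alpha_1$, on the grounds that its flip quadrilateral contains $a_1$. That implication is false. In this paper a flip between $U$ and $V$ is \emph{incident to $\alpha_1$} precisely when $U\contract{1}=V\contract{1}$, i.e.\ when deleting the vertex $a_1$ trivializes the flip (Proposition \ref{Aproposition.2.0}); the flips that are guaranteed to have this property are those that modify the triangle incident to $\alpha_1$, and on a non-disc surface there may be a few others, but certainly not every flip whose quadrilateral touches $a_1$. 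For instance, a flip replacing an arc with vertices $a_1$ and $x$ by an arc with vertices $y$ and $z$, where $y,z\notin\{a_1,a_2\}$ and the triangle incident to $\alpha_1$ is untouched, changes the star of $a_1$ yet survives the deletion of $a_1$ (the removed arc simply becomes an arc at $a_2$ and the exchanged diagonals remain non-isotopic), so it is not incident to $\alpha_1$. The two removals your argument requires (of $\mu_1$ and $\mu_2$) could each be realized by flips of this kind, and then your count gives no lower bound on the number of flips incident to $\alpha_1$ in the technical sense that is needed to feed Theorem \ref{Atheorem.2.3}.

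The paper's proof avoids this by tracking only the triangle incident to $\alpha_1$, not the whole star of $a_1$: the triangles incident to $\alpha_1$ in $C_n^-(\lceil{n/2}\rceil)$ and in $C_n^+(1)$ are distinct but both admit the preserved edge $\eta$ as an edge, and a single flip cannot exchange them because it would have to remove two edges of the first one simultaneously ($\eta$ not being available for removal). Hence the triangle incident to $\alpha_1$ changes at least twice along the geodesic, and each such change is a flip incident to $\alpha_1$. If you want to keep the degree-counting flavour of your argument, you must restrict the count to flips that modify the $\alpha_1$-triangle rather than to flips that modify arbitrary arcs at $a_1$.
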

\begin{proof}
Assume that no flip is incident to $\alpha_n$ along a geodesic $(T_i)_{0\leq{i}\leq{k}}$ between $C_n^-(\lceil{n/2}\rceil)$ and $C_n^+(1)$. The triangles incident to $\alpha_1$ in $C_n^-(\lceil{n/2}\rceil)$ and $C_n^+(1)$ are depicted in Figure \ref{Afigure.5.10}, respectively in solid lines and in dotted lines. In this figure, the leftmost edge with vertices $a_1$ and $a_+$ is an edge of the triangle incident to $\alpha_1$ in both $C_n^-(\lceil{n/2}\rceil)$ and $C_n^+(1)$.

By hypothesis, this edge is never removed along geodesic $(T_i)_{0\leq{i}\leq{k}}$. Therefore, if exactly one of the flips along this geodesic is incident to edge $\alpha_1$, this flip must remove two edges of the triangle incident to $\alpha_1$ in $C_n^-(\lceil{n/2}\rceil)$, as shown in Figure \ref{Afigure.5.10}. Hence, there are at least two flips incident to arc $\alpha_1$ along $(T_i)_{0\leq{i}\leq{k}}$.
\end{proof}

\subsection{A lower bound on the diameter of $\mathcal{MF}(\Pi_n)$}

\begin{theorem}\label{Atheorem.5.2}
For any integer $n$ greater than $2$,
$$
d(B_n^-,B_n^+)\geq\min(\{d(B_{n-1}^-,B_{n-1}^+)+3,d(B_{n-2}^-,B_{n-2}^+)+6\})\mbox{.}
$$
\end{theorem}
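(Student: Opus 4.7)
The plan is to fix a geodesic $(T_i)_{0\leq{i}\leq{k}}$ from $B_n^-$ to $B_n^+$ and branch on whether an ear ever appears and on how many flips are incident to the two ``special'' boundary arcs $\alpha_n$ and $\alpha_{\lceil{n/2}\rceil}$. If some $T_j$ has an ear, Theorem~\ref{Atheorem.5.1} immediately yields the desired inequality, so we may assume the geodesic is earless.

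If at least three flips along $(T_i)_{0\leq{i}\leq{k}}$ are incident to $\alpha_n$, then Theorem~\ref{Atheorem.2.3} gives
$$d(B_n^-,B_n^+)\geq d(B_n^-\contract{n},B_n^+\contract{n})+3=d(B_{n-1}^-,B_{n-1}^+)+3,$$
using the observation that the appropriate vertex relabeling sends $B_n^\pm\contract{n}$ to $B_{n-1}^\pm$. By the $180^\circ$ symmetry of both $B_n^-$ and $B_n^+$, the same conclusion follows if at least three flips are incident to $\alpha_{\lceil{n/2}\rceil}$: one simply interprets the geodesic after applying the symmetry.

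It remains to treat the case where at most two flips are incident to each of $\alpha_n$ and $\alpha_{\lceil{n/2}\rceil}$. In this regime Lemma~\ref{Alemma.5.4} allows us to replace our geodesic (without increasing its length) by an earless geodesic that passes through a triangulation $C_n^-(p^-)$ at some step $j^-$ and through $C_n^+(p^+)$ at some later step $j^+$. I would then split the geodesic into three segments and exhibit six ``unavoidable'' flips spread among two cleverly chosen boundary arcs, so that two applications of Theorem~\ref{Atheorem.2.3} (to the deletions $\contract{p}$ and $\contract{q}$ with $\{p,q\}$ a pair for which $B_n^\pm\contract{p}\contract{q}\cong B_{n-2}^\pm$ under a common relabeling) yields $d(B_n^-,B_n^+)\geq d(B_{n-2}^-,B_{n-2}^+)+6$. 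The two flips incident to $\alpha_n$ are already guaranteed by the construction of $C_n^\pm(p^\pm)$; the remaining four must be extracted from $\alpha_1$ (or, by symmetry, another boundary arc selected from the ends of the ``affected'' region). Lemma~\ref{Alemma.5.5} supplies extra flips incident to $\alpha_1$ in the final segment $C_n^+(p^+)\to B_n^+$ whenever $\alpha_n$ does not pick them up, and the symmetric statement handles the first segment $B_n^-\to C_n^-(p^-)$; Lemma~\ref{Alemma.5.7} takes care of the middle segment in the borderline configuration $p^-=\lceil{n/2}\rceil$, $p^+=1$. Balancing these contributions through the triangle inequality for the two deletions then closes the case.

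The main obstacle I expect is the bookkeeping in this last case: the values of $p^-$ and $p^+$ a priori range over all privileged-boundary vertices, and one has to verify that in every subcase the deletions $\contract{p},\contract{q}$ can be chosen so that (i) they together remove exactly the right pair of arcs, (ii) $B_n^\pm\contract{p}\contract{q}$ is isomorphic to $B_{n-2}^\pm$ via a single relabeling, and (iii) Lemmas~\ref{Alemma.5.5} and~\ref{Alemma.5.7} cover the segments where we still need to produce extra incident flips. Exploiting the symmetries of $B_n^\pm$ to reduce subcases (for instance, reducing to $p^-\in\{\lceil{n/2}\rceil,\lceil{n/2}\rceil+1\}$ and $p^+\in\{1,n\}$ up to relabeling) should make this manageable.
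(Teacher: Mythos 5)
Your overall skeleton matches the paper's proof: dispose of the ear case via Theorem~\ref{Atheorem.5.1}, get the $+3$ branch from three flips incident to $\alpha_n$ or $\alpha_{\lceil{n/2}\rceil}$ via Theorem~\ref{Atheorem.2.3}, and otherwise invoke Lemma~\ref{Alemma.5.4} and split into the cases $p^+>1$, the symmetric case, and the borderline case $p^-=\lceil{n/2}\rceil$, $p^+=1$, using Lemmas~\ref{Alemma.5.5} and~\ref{Alemma.5.7} to accumulate four flips incident to $\alpha_1$ on the original geodesic. Up to that point the plan is sound and is essentially the paper's.

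The gap is in how you propose to turn those contributions into the $+6$. You want to count two flips incident to $\alpha_n$ ``already guaranteed by the construction of $C_n^\pm(p^\pm)$'' together with four flips incident to $\alpha_1$, and feed both counts into two applications of Theorem~\ref{Atheorem.2.3}. But the second application of Theorem~\ref{Atheorem.2.3} requires flips incident to the second arc along a geodesic \emph{between the already-contracted triangulations} $B_n^-\contract{1}$ and $B_n^+\contract{1}$; the two $\alpha_n$-incident flips you have live on the original geodesic in $\MF(\Pi_n)$ and do not automatically transfer. One could try to route this through Lemma~\ref{Alemma.2.1} (contract the original geodesic at $a_1$ and argue the resulting path still carries two flips incident to $\alpha_n$), but that requires verifying that the two $\alpha_n$-incident flips are disjoint from the four $\alpha_1$-incident flips (a flip can be incident to several boundary arcs, cf.\ Proposition~\ref{Aproposition.2.1}) and that incidence survives the contraction; neither is checked, and the contracted path need not be geodesic anyway. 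The paper avoids all of this with a separate structural argument: the triangles incident to $\alpha_{n-1}$ in $B_n^-\contract{1}$ and $B_n^+\contract{1}$ separate the two boundary loops in opposite ways, so by the obstruction of Fig.~\ref{Afigure.5.3} no single flip can exchange them, whence \emph{every} geodesic between $B_n^-\contract{1}$ and $B_n^+\contract{1}$ has at least two flips incident to $\alpha_{n-1}$. This independent ``$+2$'' on the contracted pair, combined with your ``$+4$'' on $\alpha_1$ and the isomorphism $B_n^\pm\contract{1}\contract{n-1}\cong B_{n-2}^\pm$, is what actually closes the case; it is the ingredient your proposal is missing.
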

\begin{proof}
Let $n$ be an integer greater than $2$. If one of the triangulations along any geodesic between $B_n^-$ and $B_n^+$ has an ear, then the desired result follows from Theorem \ref{Atheorem.5.1}, and it is assumed in the remainder of the proof that all the geodesic between $B_n^-$ and $B_n^+$ are earless. Moreover, if $p\in\{n,\lceil{n/2}\rceil\}$ and if at least three flips are incident to $\alpha_p$ along some geodesic between $B_n^-$ and $B_n^+$, the result follows from Theorem \ref{Atheorem.2.3} because $B_n^-\contract{p}$ and $B_n^+\contract{p}$ are respectively isomorphic to $B_{n-1}^-$ and $B_{n-1}^+$ via the same vertex relabeling. Hence, it will also be assumed that at most $2$ flips are incident to either $\alpha_n$ or $\alpha_{\lceil{n/2}\rceil}$ along any geodesic between $B_n^-$ and $B_n^+$. Under these assumptions, Lemma \ref{Alemma.5.4} provides an earless geodesic $(T'_i)_{0\leq{i}\leq{k}}$ from $B_n^-$ to $B_n^+$ and four integers $p^-$, $p^+$, $j^-$, and $j^+$ so that $j^-\leq{j^+}$, and $T'_{j^-}$ and $T'_{j^+}$ are respectively equal to $C_n^-(p^-)$, and $C_n^+(p^+)$.

First assume that $p^+>1$. Observe that the triangle incident to $\alpha_n$ in $C_n^+(p^+)$ is distinct from the triangles incident to this arc in $B_n^-$ and in $B_n^+$. As no more than $2$ flips are incident to $\alpha_n$ along $(T'_i)_{0\leq{i}\leq{k}}$, exactly one of the first $j^+$ flips and exactly one of the last $k-j^+$ flips along this geodesic are incident to $\alpha_n$. In this case, Lemma \ref{Alemma.5.5} states that at least two of the last $k-j^+$ flips along $(T'_i)_{0\leq{i}\leq{k}}$ are incident to $\alpha_1$. Now observe that the triangle incident to $\alpha_1$ in $C_n^-(p^-)$ is distinct from the triangles incident to this arc in $B_n^-$ and in $C_n^+(p^+)$. Hence at least two of the first $j^+$ flips along $(T'_i)_{0\leq{i}\leq{k}}$ are incident to $\alpha_1$, which proves that at least four such flips are found along this geodesic, and Theorem \ref{Atheorem.2.3} yields
\begin{equation}\label{Aequation.5.6}
d(B_n^-,B_n^+)\geq{d(B_n^-\contract1,B_n^+\contract1)+4}\mbox{.}
\end{equation}

We now show that this inequality still holds when $p^+=1$. Thanks to the symmetries of $B_n^-$ and $B_n^+$, the arguments in the last paragraph also prove (\ref{Aequation.5.6}) when $p^-$ is distinct from $\lceil{n/2}\rceil$. Now assume that $p^-=\lceil{n/2}\rceil$ and that $p^+=1$. In this case, according to Lemma \ref{Alemma.5.7}, at least two flips along $(T'_i)_{0\leq{i}\leq{k}}$ are incident to $\alpha_1$. Moreover, one can require that they take place between $C_n^-(p^-)$ and $C_n^+(p^+)$ along the geodesic. Now observe that the triangles of $B_n^-$ and $C_n^-(\lceil{n/2}\rceil)$ incident to $\alpha_1$ are distinct. Hence at least three of the first $p^+$ flips along $(T'_i)_{0\leq{i}\leq{k}}$ are incident to this arc. In addition, the triangles of $C_n^+(1)$ and $B_n^+$ incident to $\alpha_1$ are distinct. Therefore, at least one of the $k-p^+$ last flips along $(T'_i)_{0\leq{i}\leq{k}}$ is incident to $\alpha_1$, which proves that at least four such flips are found along this geodesic, and inequality (\ref{Aequation.5.6}) still holds in this case.

Finally observe that there must be at least two flips incident to $\alpha_{n-1}$ along any geodesic between $B_n^-\contract1$ and $B_n^+\contract1$. Indeed, the triangles $t^-$ and $t^+$ incident to $\alpha_{n-1}$ in these respective triangulations separate the two boundary loops in opposite ways. As shown in Fig. \ref{Afigure.5.3}, a single flip cannot exchange $t^-$ and $t^+$. Hence, at least two flips are incident to $\alpha_{n-1}$ along any geodesic between $B_n^-\contract1$ and $B_n^+\contract1$, and Theorem \ref{Atheorem.2.3} yields
\begin{equation}\label{Aequation.5.7}
d(B_n^-\contract1,B_n^+\contract1)\geq{d(B_n^-\contract1\contract{n-1},B_n^+\contract1\contract{n-1})+2}\mbox{.}
\end{equation}

Since $B_n^-\contract1\contract{n-1}$ and $B_n^+\contract1\contract{n-1}$ are isomorphic to $B_{n-2}^-$ and $B_{n-2}^+$ by the same vertex deletion, the result is obtained combining (\ref{Aequation.5.6}) and (\ref{Aequation.5.7}).
\end{proof}

Using this theorem, one obtains a lower bound on the diameter of $\mathcal{MF}(\Pi_n)$:

\begin{theorem}\label{Atheorem.5.3}
The diameter of $\mathcal{MF}(\Pi_n)$ is not less than $3n$.
\end{theorem}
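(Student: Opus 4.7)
The plan is to deduce Theorem \ref{Atheorem.5.3} by strong induction on $n$, using Theorem \ref{Atheorem.5.2} as the inductive step. Set $f(n) := d(B_n^-,B_n^+)$. Since the target $3n$ satisfies both $3(n-1)+3 = 3n$ and $3(n-2)+6 = 3n$, the minimum on the right-hand side of the recursion
$$
f(n) \geq \min\bigl(f(n-1)+3,\; f(n-2)+6\bigr)
$$
exactly propagates a bound of the form $f(n) \geq 3n$ from smaller values of $n$ to larger ones. So once suitable base cases $f(1) \geq 3$ and $f(2) \geq 6$ are in place, the induction runs without any further effort, and the theorem follows from $\diam(\mathcal{MF}(\Pi_n)) \geq f(n)$.

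The base cases are verified by direct inspection of the finite modular flip-graphs $\mathcal{MF}(\Pi_1)$ and $\mathcal{MF}(\Pi_2)$, using the explicit depictions of $B_1^\pm$ and $B_2^\pm$ in Fig.~\ref{Afigure.5.2}. For $n=1$, one enumerates the triangulations in $\mathcal{MF}(\Pi_1)$ and checks that no path of length at most $2$ connects $B_1^-$ to $B_1^+$; equivalently, one argues that getting from $B_1^-$ to $B_1^+$ requires relocating the loop $\alpha_+$ across $\alpha_-$, which forces at least three flips. For $n=2$, an analogous but slightly more involved case analysis on $\mathcal{MF}(\Pi_2)$ yields $f(2) \geq 6$; if sharper scrutiny of $\mathcal{MF}(\Pi_2)$ is awkward, one may instead apply Theorem \ref{Atheorem.5.2} once to reduce $f(2)$ to the $n=0$ or $n=1$ cases and still obtain the required inequality.

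The inductive step is essentially automatic: assume $f(m) \geq 3m$ for all $m<n$ with $n \geq 3$; then
$$
f(n) \geq \min\bigl(f(n-1)+3,\; f(n-2)+6\bigr) \geq \min\bigl(3(n-1)+3,\; 3(n-2)+6\bigr) = 3n.
$$
Since $B_n^-$ and $B_n^+$ are elements of $\mathcal{MF}(\Pi_n)$, one concludes $\diam(\mathcal{MF}(\Pi_n)) \geq f(n) \geq 3n$.

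The substantive obstacle has already been dispatched by Theorem \ref{Atheorem.5.2}; what remains is purely book-keeping. The only mildly delicate point is making sure the base cases are correctly pinned down, because an incorrect or too-weak base case would shift the constant in the final bound. If the direct inspection for $n=1,2$ turns out to give strict inequalities, one could even hope to strengthen the conclusion to $\diam(\mathcal{MF}(\Pi_n)) \geq 3n + c$ for some positive $c$; however, for the statement at hand, it suffices to confirm the clean base values $f(1) \geq 3$ and $f(2) \geq 6$ and let the recursion do the rest.
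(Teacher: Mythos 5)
Your proposal is correct and follows essentially the same route as the paper: verify $d(B_1^-,B_1^+)\geq 3$ and $d(B_2^-,B_2^+)\geq 6$ by direct inspection of the triangulations in Fig.~\ref{Afigure.5.2} (all interior arcs incident to $a_-$, respectively to both $a_-$ and $a_+$, must be removed), then induct via Theorem \ref{Atheorem.5.2}. The only small caveat is your suggested fallback of applying Theorem \ref{Atheorem.5.2} to handle $n=2$: that theorem is only stated for $n>2$, so the $n=2$ base case genuinely must be checked by hand as the paper does.
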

\begin{proof}
One can see using Fig. \ref{Afigure.5.2} that at least three of the interior arcs of $A_1^-$ have to be removed in order to transform it into $A_1^+$. For instance, either all the arcs incident to $a_-$, or all the arcs incident to $a_+$ have to be removed. Hence:
\begin{equation}\label{Aequation.5.8}
d(B_1^-,B_1^+)\geq3\mbox{.}
\end{equation}

One can see on the same figure that transforming $A_2^-$ into $A_2^+$ requires to remove the arcs incident to $a_-$ and the arcs incident to $a_+$. As there are $6$ such arcs,
\begin{equation}\label{Aequation.5.9}
d(B_2^-,B_2^+)\geq6\mbox{.}
\end{equation}

The lower bound of $3n$ on the diameter of $\mathcal{MF}(\Pi_n)$ therefore follows by induction from Theorem \ref{Atheorem.5.2} and from inequalities (\ref{Aequation.5.8}) and (\ref{Aequation.5.9}).
\end{proof}

Observe that the flip distance of $B_1^-$ and $B_1^+$ is exactly $3$ (flipping all the arcs incident to $a_-$ provides a geodesic). The flip distance of $B_2^-$ and $B_2^+$ is, however equal to $7$ because all the interior arcs of $B_2^-$ have to be removed in order to transform this triangulation into $B_2^+$. This shows that the lower bound given by Theorem \ref{Atheorem.5.3} on $\diam(\mathcal{MF}(\Pi_n))$ is not sharp.

Finally, consider the two triangulations shown in Fig. \ref{Afigure.5.11}.
\begin{figure}
\begin{centering}
\includegraphics{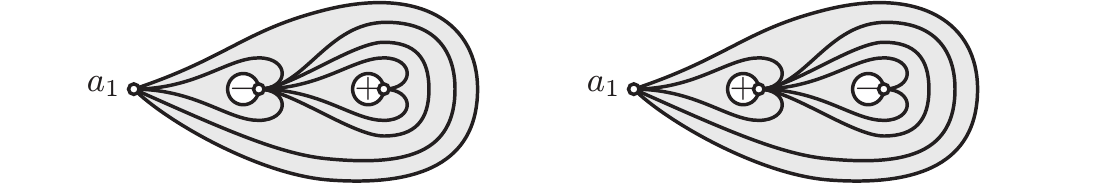}
\caption{Two triangulations in $\MF(\Pi_1)$ at flip distance at least $5$. Vertices $a_-$ and $a_+$ are respectively labeled $-$ and $+$.}\label{Afigure.5.11}
\end{centering}
\end{figure}
In order to transform the left triangulation into the right one, the three interior arcs incident to $a_1$ must be removed as well as the interior arc twice incident to with vertex $a_-$ and at least one of the arcs with vertices $a_-$ and $a_+$. As a consequence, the flip distance of these triangulations is at least $5$. This shows that even already when $n=1$, triangulations $B_n^-$ and $B_n^+$ are not maximally distant.

\section{Consequences and further questions}

As a first consequence of the above theorems, we prove the following. 

\begin{theorem}
Let $\Gamma$ be as defined previously. If $\Gamma \subset \Sigma$ is an essential embedding, then
$$
\lim_{n\to \infty} \frac{ \diam(\MFS) }{n} \geq \frac{5}{2}.
$$
\end{theorem}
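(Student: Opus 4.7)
The plan is to use the essential embedding $\Gamma \hookrightarrow \Sigma$ to pull the extremal pair $A_n^\pm \in \MF(\Gamma_n)$ from Theorem~\ref{Atheorem.4.2} back into $\MF(\Sigma_n)$ as triangulations $\tilde A_n^\pm$ whose modular flip distance is at least as large, and then divide $\lfloor 5n/2 \rfloor - 2$ by $n$.

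First I would set up the embedding as follows. Identify $\Gamma$ with its essential image in $\Sigma$ (privileged boundaries matching), let $\gamma$ denote the boundary loop of $\Gamma$ (now a simple closed curve in $\Sigma$ carrying the vertex $a_0$), and let $\Sigma' = \overline{\Sigma \setminus \Gamma}$. Because the embedding is essential and $\Gamma$ is a proper subsurface, $\Sigma'$ carries enough topology to admit a triangulation; fix one such triangulation $W$, with $a_0$ among its vertices and no vertices on the privileged boundary of $\Sigma$, once and for all. For any triangulation $T$ of $\Gamma_n$, let $\tilde T$ be the triangulation of $\Sigma_n$ obtained by gluing $T$ and $W$ along $\gamma$. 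This construction is manifestly $1$-Lipschitz on flips and descends to a map $\iota: \MF(\Gamma_n) \to \MF(\Sigma_n)$.

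Next I would show that $\iota$ is essentially an isometric embedding. The interior arcs of $W$ form a multi arc $\mu$ shared by every triangulation in the image of $\iota$. Since $\mcg(\Sigma_n)$ fixes the privileged boundary of $\Sigma_n$ pointwise and the embedding is essential, the isotopy class of $\gamma$ is the unique one cutting off a copy of $\Gamma$ from the privileged boundary, and is therefore invariant under $\mcg(\Sigma_n)$. After choosing $W$ compatibly with any residual action on $\Sigma'$, the multi arc $\mu$ itself is invariant under $\mcg(\Sigma_n)$. The modular version of the projection lemma noted in the paragraph following Lemma~\ref{lem:projlemma} then applies: any geodesic in $\MF(\Sigma_n)$ between two triangulations in the image of $\iota$ preserves $\mu$, hence stays inside the image, and corresponds to a path in $\MF(\Gamma_n)$ of the same length. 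Applying this to $\iota(A_n^-)$ and $\iota(A_n^+)$ and invoking Theorem~\ref{Atheorem.4.2} yields
$$
\diam(\MF(\Sigma_n)) \;\geq\; d_{\MF(\Sigma_n)}(\iota(A_n^-),\iota(A_n^+)) \;=\; \lfloor 5n/2 \rfloor - 2.
$$
Dividing by $n$ and using Corollary~\ref{cor:limits} to ensure that the limit exists gives the bound $5/2$.

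The main obstacle is showing that $\mu$ can genuinely be chosen invariant under $\mcg(\Sigma_n)$, which is where the essentialness of the embedding plays its central role. Even if the naive choice of $W$ fails to be invariant, any element of $\mcg(\Sigma_n)$ restricts (after fixing $\gamma$ up to isotopy) to a homeomorphism of $\Sigma'$ from a finite set of possibilities, so one may arrange $W$ to be a common fixed point, or otherwise absorb the discrepancy into an $O(1)$ correction independent of $n$. In either case the $(5/2)\,n$ growth of the lower bound is preserved and the asymptotic statement follows.
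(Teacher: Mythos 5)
Your overall strategy is the same as the paper's: glue a fixed triangulation $W$ of the complementary surface $\Sigma'$ onto the extremal pair $A_n^\pm$ of $\MF(\Gamma_n)$, argue via the projection lemma that the resulting pair is just as far apart in $\MF(\Sigma_n)$, and quote Theorem~\ref{Atheorem.4.2}. However, the step you yourself flag as the main obstacle is a genuine gap as you have written it. You want the entire multi arc $\mu$ of interior arcs of $W$ to be invariant under $\mcg(\Sigma_n)$, and you justify this by saying that elements of $\mcg(\Sigma_n)$ restrict to homeomorphisms of $\Sigma'$ ``from a finite set of possibilities.'' That is false in general: the mapping class group of $\Sigma'$ is typically infinite (it contains Dehn twists whenever $\Sigma'$ has any topology, which is exactly the situation of interest), so there is no common fixed triangulation $W$, and no single choice of $\mu$ is preserved by every homeomorphism. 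The fallback of ``absorbing the discrepancy into an $O(1)$ correction'' is not an argument; the orbit of $\mu$ under $\mcg(\Sigma_n)$ is infinite, so it is not clear a priori that a geodesic in the quotient cannot save flips by moving through triangulations of $\Sigma'$ not containing $\mu$.

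The repair --- and this is what the paper actually does --- is to apply the modular version of Lemma~\ref{lem:projlemma} only to the single arc $\gamma$, the image of the boundary loop of $\Gamma_n$, whose isotopy class is invariant under $\mcg(\Sigma_n)$ because these homeomorphisms fix the privileged boundary pointwise. (Incidentally, your justification of even this weaker invariance via ``uniqueness of the curve cutting off a copy of $\Gamma$'' is shaky, since twisting produces other loop arcs cutting off homeomorphic subsurfaces; invariance of the specific isotopy class of $\gamma$ is what one argues, as in the paragraph after Lemma~\ref{lem:projlemma}.) Once every triangulation along a geodesic between $\iota(A_n^-)$ and $\iota(A_n^+)$ contains $\gamma$, each such triangulation splits along $\gamma$ into a triangulation of $\Gamma_n$ and one of $\Sigma'$, and each flip takes place on one side or the other. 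Since the two endpoints agree on the $\Sigma'$ side (both equal $W$), any flip performed there is superfluous on a geodesic, so some geodesic lies entirely in the image of $\iota$ and has length $d(A_n^-,A_n^+)=\lfloor 5n/2\rfloor-2$. With that substitution your argument closes; the rest of your write-up (the $1$-Lipschitz direction, the use of Corollary~\ref{cor:limits} for existence of the limit, and the division by $n$) is fine.
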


\begin{proof}
If $\Gamma$ is embedded in $\Sigma$, there exists a surface $\Sigma'$ (possibly empty if $\Gamma = \Sigma$) such that gluing $\Sigma'$ and $\Gamma$ results in $\Sigma$. 

Now we take two diametrically opposite points $U$ and $V$ in $\MFC$ and send them to points in $\MFS$ by glueing a fixed triangulation of $\Sigma'$ to $U$ and to $V$. Denote by $U'$ and $V'$ the resulting triangulations of $\MFS$. We claim that
$$
d(U',V')=d(U,V).
$$

That the distance of $U'$ and $V'$ is at most that of $U$ and $V$ is obvious as any path in $\MFC$ can easily be emulated in $\MFS$. To see that $d(U',V')$ is at least $d(U,V)$ we will use Lemma \ref{lem:projlemma}. By the lemma, if two triangulations in $\FS$ have an arc or a set of arcs in common, then any geodesic between them conserves these arcs. Now, of course this property may no longer be true when one quotients by the group of homeomorphisms, but in this case it works. Indeed, as we consider homeomorphisms that preserve marked points, the isotopy class of a curve parallel to the privileged boundary curve is preserved by any such homeomorphism. This implies that the isotopy class of the embedding of the boundary loop of $\Gamma_n$ is also preserved. Thus there exists a geodesic between $U'$ and $V'$ such that all triangulations contain this arc. Any flip on the $\Sigma'$ side of the surface would be superfluous. Hence there is a geodesic that lies entirely in this natural copy of $\MFC$, and we are done.
\end{proof}

This theorem implies that the diameter growth rate for all filling surfaces is at least on the order of $5n/2$ except for the disc, the once punctured disc, and possibly for the filling surfaces of positive genus without interior vertices or non-privileged boundaries.
For example if $\Sigma$ is a disk with two unmarked points, then the diameter of the modular flip-graphs of $\Sigma$ grows like $5n/2$.

In fact there are multiple variations and consequences either of the above results or of the method of their proof. For example, one could try and emulate the method for the lower bounds of $\Pi$, but the combinatorics become more and more difficult to handle. There is reason to believe that increasing the number of marked boundary loops might increase the diameter of the underlying flip-graph. In the case of unmarked boundary loops, we can also expect some form of monotonicity in function of the topology. In fact we suspect that the following is true.

\begin{conjecture}
For any $\varepsilon>0$ there exists a $k_\varepsilon$ such that if $\Sigma$ is a surface with $k_\varepsilon$ marked boundary non privileged loops, the diameters of its flip-graphs satisfy
$$
\lim_{n\to \infty}  \frac{ \diam(\MFS) }{n} \geq 4 - \varepsilon.
$$
\end{conjecture}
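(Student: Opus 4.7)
The plan is to attempt to match the upper bound of Theorem~\ref{UBtheorem.47} by constructing, for each $k\geq 2$, a pair of triangulations $B_n^{k,-}, B_n^{k,+}\in\MF(\Sigma_n)$ with $\Sigma$ carrying exactly $k$ marked non-privileged boundary loops, and showing
$$
d(B_n^{k,-}, B_n^{k,+}) \geq \left(4 - \tfrac{2}{k}\right) n + O_k(1).
$$
Taking $k_\varepsilon = \lceil 2/\varepsilon \rceil$ would then yield the conjecture. The construction should mimic the $\Pi$ case of Section~\ref{Asection.5}: start from the zigzag $Z_n$ on the disc $\Delta_n$, pierce $k$ triangles along the zigzag to insert loops $\alpha_1^\circ,\dots,\alpha_k^\circ$ with vertices $a_1^\circ,\dots,a_k^\circ$, and position these loops in $B_n^{k,-}$ along one ``side'' of the zigzag and in $B_n^{k,+}$ along the opposite side, so that all $k$ loops must collectively migrate across the triangulation.

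The desired lower bound would follow by iterating an inductive inequality of the shape
$$
d(B_n^{k,-}, B_n^{k,+}) \geq d(B_{n-k}^{k,-}, B_{n-k}^{k,+}) + (4k - 2),
$$
obtained by performing $k$ successive privileged-vertex deletions, one near each loop's anchor, and invoking Theorem~\ref{Atheorem.2.3} for each. Heuristically each loop should contribute roughly $4$ flips per unit reduction in $n$, with $-2$ overall savings shared across the block — paralleling the $\Pi$ case ($k=2$, rate $3 = (4\cdot 2-2)/2$) and the $\Gamma$ case ($k=1$, rate $5/2$, via a two-step inequality with constants $3$ and $5$). The per-block analysis would generalize Theorems~\ref{Atheorem.5.1}--\ref{Atheorem.5.2}: for each of the $k$ loops, either some triangulation along the geodesic develops an ear, in which case Lemma~\ref{Alemma.2.75} forces multiple flips incident to specific arcs bordering that ear; or no such ear appears, in which case one introduces canonical intermediate triangulations $C_n^{k,\pm}(\vec p)$ parametrized by a $k$-tuple $\vec p$ of loop positions through which any earless geodesic must pass, and counts flips incident to the $\alpha_{p_i}$ via a $k$-loop analogue of Lemmas~\ref{Alemma.5.4}--\ref{Alemma.5.7}.

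The main obstacle is that the combinatorial case analysis explodes with $k$: already $k=2$ required seven auxiliary lemmas, and the interactions among $k$ loops are not captured by iterating two-loop analysis. One must handle configurations where several loops cluster in one ``pod'', where one loop's ear obstructs another loop's natural flip, and where earless geodesics exploit shortcuts that move multiple loops in parallel through shared triangles incident to the privileged boundary. A more promising route than grinding out such casework would be an amortization argument: associate to each triangulation $T\in\MF(\Sigma_n)$ a potential $\Phi_k(T)$ measuring the total displacement of its $k$ loop configurations relative to $B_n^{k,-}$ (for example, a weighted sum over loops of signed cyclic positions modulo $n$), show that every single flip changes $\Phi_k$ by at most some $c_k$ with $c_k^{-1} \geq 4 - 2/k$, and verify that $\Phi_k(B_n^{k,+}) \geq n - O(k)$. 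The hard part is constructing a potential whose per-flip decrease is genuinely bounded even for flips simultaneously affecting several loops via shared triangles; without such a structural invariant, one is forced back into the exploding case analysis, which is almost certainly the reason this statement appears as a conjecture rather than a theorem.
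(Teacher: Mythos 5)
The statement you are addressing is stated in the paper as a \emph{conjecture}: the authors do not prove it, and there is no proof in the paper to compare yours against. What you have written is likewise not a proof but a research plan, and you say as much in your final sentence. The two load-bearing steps are both left unestablished: (i) the inductive block inequality $d(B_n^{k,-},B_n^{k,+})\geq d(B_{n-k}^{k,-},B_{n-k}^{k,+})+(4k-2)$ is asserted as a target shape, not derived --- you give no analogue of Theorem \ref{Atheorem.5.2}, no specification of which $k$ deletions are performed, and no argument that each deletion is accompanied by the requisite number of incident flips along an arbitrary geodesic; (ii) the alternative amortization route requires a potential $\Phi_k$ with per-flip variation at most $1/(4-2/k)$, and you explicitly do not construct it. Either of these would be the entire content of a proof; without one of them the conjecture remains exactly as open as the authors left it.

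Two further concrete issues with the sketch itself. First, your proposed construction (piercing $k$ triangles ``along one side'' of the zigzag and placing them on the opposite side in the other triangulation) is not obviously extremal: the upper-bound proof of Theorem \ref{UBtheorem.47} suggests that the bottleneck configuration has the $k$ loops spread so as to create gaps of size about $n/k$, and it is not clear your placement forces every loop to travel distance $\Theta(n)$ rather than $\Theta(n/k)$; this needs to be argued, since the lower bound must hold for \emph{every} geodesic. Second, your own calibration check fails at $k=1$: the block heuristic gives rate $(4\cdot 1-2)/1=2$, whereas the true rate for $\Gamma$ is $5/2$, obtained in the paper from the two-step recursion $d_n\geq\min(d_{n-1}+3,\,d_{n-2}+5)$ rather than from a single $k$-step block. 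This indicates that the correct recursion for general $k$ is unlikely to have the clean form you propose, and that the per-loop accounting of ``$4$ flips minus a shared saving of $2$'' is not the right invariant. In short: the approach is a reasonable direction, consistent with how the authors handled $\Gamma$ and $\Pi$, but nothing here constitutes a proof of the conjecture.
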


In the unmarked case, we conjecture the following.

\begin{conjecture}
For any $\varepsilon>0$ there exists a $k_\varepsilon$ such that if $\Sigma$ is a surface with $k_\varepsilon$ unmarked boundary non privileged loops, the diameters of its flip-graphs satisfy
$$
\lim_{n\to \infty}  \frac{ \diam(\MFS) }{n} \geq 3 - \varepsilon.
$$
\end{conjecture}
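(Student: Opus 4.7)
The strategy is to mirror the construction behind Theorem \ref{Atheorem.5.3}, scaling it up to surfaces with many unmarked boundary loops. Since Theorem \ref{UBtheorem.48} already gives an upper bound of $(3-\frac{1}{2k})n+K_k$, the conjecture asserts that this ratio is asymptotically tight as $k$ grows. The goal is therefore to exhibit, for each sufficiently large $k$, a specific filling surface $\Sigma$ with $k$ unmarked boundary loops and a family of pairs $(B_n^-,B_n^+)$ in $\MFS$ whose flip distances grow at least like $(3-\varepsilon)n$.

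I would generalise the construction of $B_n^\pm$ from Section \ref{Asection.5} as follows: starting from a zigzag-type triangulation of the disc, pierce $k$ disjoint ears to introduce $k$ unmarked boundary loops, placing them at $k$ carefully chosen positions in the ``left'' half of the zigzag for $B_n^-$ and at the mirror-image positions in the ``right'' half for $B_n^+$. Intuitively $B_n^-$ and $B_n^+$ would be close to reflections of one another, so any flip sequence transforming one into the other must transport the loops across roughly $n/2$ boundary vertices in total. Following the template of Sections \ref{Asection.4} and \ref{Asection.5}, I would then aim for an inductive inequality of the shape
$$
d(B_n^-,B_n^+)\geq\min\bigl\{d(B_{n-1}^-,B_{n-1}^+)+3,\;d(B_{n-2}^-,B_{n-2}^+)+6-\eta_k\bigr\},
$$
with $\eta_k\to0$ as $k\to\infty$, arising from the same style of case analysis as in the proof of Theorem \ref{Atheorem.5.2}: consider an arbitrary geodesic $(T_i)$ from $B_n^-$ to $B_n^+$, split cases according to whether some $T_i$ has an ear and according to which arc is introduced by the first flip incident to $\alpha_n$, and in each case apply Theorem \ref{Atheorem.2.3} to a suitable pair of successive vertex deletions, using the persistence up to homeomorphism of the global cyclic arrangement of boundary loops along a geodesic.

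The hard part, and the reason this remains a conjecture, is controlling the mapping class group action on the unmarked loops. In the marked case of $\Pi$ a key gadget is that two arcs separating the boundary loops in opposite ways cannot be exchanged by a single flip (Fig. \ref{Afigure.5.3}), which is precisely what forces the extra flips contributing the $+6$ above. With unmarked loops the mapping class group can permute them, so the geodesic has much more freedom: rather than physically transporting a loop through $n/2$ triangles, one can sometimes ``absorb'' it into an existing nearby loop of the other triangulation via a homeomorphism, effectively relabelling. To rule this out I would choose the positions of the $k$ loops in $B_n^\pm$ to be two \emph{incompressible} configurations on the privileged boundary, in the following sense: for positions $\{i_1,\dots,i_k\}$ and $\{j_1,\dots,j_k\}$ one requires that for every bijection $\sigma$ between them the transport cost $\sum_{l=1}^k d_{\Z_n}(i_l,j_{\sigma(l)})$ is bounded below by $(1-O(1/k))\cdot n/2$; such configurations exist by a straightforward pigeonhole argument provided the two sets are chosen as evenly spaced shifts of one another. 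Combining this transport lower bound with an adaptation of Lemmas \ref{Alemma.2.75} and \ref{Alemma.5.1}--\ref{Alemma.5.7}, which would have to be extended to certify that each unit of transport still costs asymptotically at least three flips even when loops may be permuted, one would obtain $d(B_n^-,B_n^+)\geq(3-O(1/k))n-O(1)$, from which the conjecture follows by choosing $k$ large enough depending on $\varepsilon$.
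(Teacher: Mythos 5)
The statement you are addressing is stated in the paper as a conjecture, with no proof offered, and your proposal does not close it: it is a research programme whose decisive steps are explicitly deferred. You write that the lemmas ``would have to be extended to certify that each unit of transport still costs asymptotically at least three flips even when loops may be permuted'' --- but that certification \emph{is} the conjecture. The inductive inequality with the error term $\eta_k$ is only ``aimed for,'' the generalised triangulations $B_n^\pm$ with $k$ pierced ears are not shown to satisfy the deletion compatibilities ($B_n^\pm\contract{i}\contract{j}$ isomorphic to $B_{n-2}^\pm$ under a common relabeling) on which Theorem \ref{Atheorem.2.3} is repeatedly invoked, and no analogue of the case analysis of Theorem \ref{Atheorem.5.2} is actually carried out.

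Two of the gaps are structural rather than merely unfinished. First, the key obstruction in the $\Pi$ argument --- that the triangles $t^-$ and $t^+$ separating the two boundary loops ``in opposite ways'' cannot be exchanged by a single flip (Fig.~\ref{Afigure.5.3}) --- depends on the loops being marked, hence distinguishable. With unmarked loops a homeomorphism may permute them, so two such configurations can coincide in $\MFS$, and the mechanism that produces the extra flips in Lemmas \ref{Alemma.5.1}--\ref{Alemma.5.7} evaporates; you name this difficulty but offer no replacement gadget. Second, your transport lower bound $\sum_{l}d_{\Z_n}(i_l,j_{\sigma(l)})$ does not convert into a flip count: the proof of Theorem \ref{UBtheorem.48} shows that several loops can be merged into a single pod at cost depending only on $k$ and then moved past a privileged boundary vertex with \emph{one} flip for the whole pod, so $k$ loops can traverse a vertex for the price of one. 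Your ``incompressibility'' condition constrains the matching $\sigma$ but not this bunching, so even granting evenly spaced configurations the claimed bound $(3-O(1/k))n-O(1)$ does not follow. The statement therefore remains, as in the paper, a conjecture.
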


There are many other questions that we feel could be interesting. A very basic one is to understand the growth of diameter of the flip-graph when $\Sigma$ is a torus (with a privileged boundary curve). Our methods in their current state are not able to say anything meaningful in this case.

Other more complicated variations of the above problems are for surfaces where we have multiple privileged boundary components and add vertices to several of them. We suspect that one can find very different diameter growths by sufficiently varying the problem. 

To conclude we now have examples of $\Sigma$ with $2n$, $\frac{5}{2}n $ and $3 n$ growth rate. This begs the question of classifying which numbers can appear as growth rates of these diameters. We suspect that the growth rates continue to change when the topology changes. More precisely we conjecture the following.

\begin{conjecture}
The number of topological types of filling surfaces with the same growth rate is finite. 
\end{conjecture}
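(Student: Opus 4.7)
The plan is to attack this conjecture by combining a monotonicity principle for growth rates under essential embeddings with a complexity/pigeonhole argument. Write $g(\Sigma) := \lim_{n\to\infty} \diam(\MFS)/n$ for the growth rate, which exists and lies in $[2,4]$ thanks to Theorem~\ref{thm:basicupperboundintro} together with Proposition~\ref{prop:monotonicity}.

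\emph{Step 1 (complexity).} Define a complexity $c(\Sigma) := -\chi(\Sigma) + m(\Sigma) + u(\Sigma) + b(\Sigma)$, where $m$, $u$ count marked and unmarked interior points and $b$ counts non-privileged boundary components. A standard classification of compact orientable surfaces shows that, for each fixed $N$, only finitely many topological types of filling surface satisfy $c(\Sigma) \leq N$.

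\emph{Step 2 (monotonicity under embedding).} Generalize the theorem proved just before the conjecture to the following statement: if there is an essential embedding $\Sigma' \hookrightarrow \Sigma$ that identifies the privileged boundary of $\Sigma'$ with that of $\Sigma$ and respects the marked/unmarked status of all features, then $g(\Sigma) \geq g(\Sigma')$. The proof proceeds exactly as the proof of the $\Gamma \subset \Sigma$ case: the multi-arc made out of the non-privileged boundary curves of $\Sigma'$ inside $\Sigma$ is invariant under $\mcg(\Sigma_n)$ because each such curve is topologically distinguishable (it bounds a subsurface carrying a prescribed combination of genus/punctures/loops), so the Projection Lemma~\ref{lem:projlemma} descends to the modular quotient, giving an isometric copy of $\MF(\Sigma'_n)$ inside $\MFS$.

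\emph{Step 3 (forcing the rate up with complexity) -- the main obstacle.} The conjecture would then follow from the key claim: for every $c < 4$ there exists $N_c$ so that $c(\Sigma) > N_c$ implies $g(\Sigma) > c$. Indeed, given this, any fixed growth rate $c_0$ is realized only by surfaces with $c(\Sigma) \leq N_{c_0}$ (apply the claim to any $c$ with $c_0 > c$), and by Step~1 only finitely many such types exist. The claim itself requires exhibiting a sequence $\{\Sigma^{(k)}\}$ of filling surfaces with $g(\Sigma^{(k)}) \to 4$ and a combinatorial lemma ensuring that every filling surface of sufficiently high complexity essentially contains $\Sigma^{(k)}$. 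The natural candidates are the surfaces of Theorems~\ref{thm:uppermarkedintro}--\ref{thm:upperunmarkedintro}, for which $4 - 2/k$ is the conjectural asymptotic rate with $k$ marked boundary loops (the case $k=1$ is Theorem~\ref{thm:gammaintro}, the case $k=2$ is Theorem~\ref{thm:piintro}).

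The hard part is the matching lower bound $g(\Sigma^{(k)}) \geq 4 - 2/k - o_k(1)$. The plan would be to construct, for each $k$, analogues $A_n^{\pm,k}$ and $B_n^{\pm,k}$ of the extremal pairs from Sections~\ref{Asection.4}--\ref{Asection.5} by piercing a zigzag triangulation at $k$ well-chosen ears, then to extend the vertex-deletion recursion of Theorems~\ref{Atheorem.4.1} and~\ref{Atheorem.5.2} by proving that along every geodesic each privileged boundary arc must witness a number of incident flips growing with $k$. The combinatorial bookkeeping blows up sharply with $k$, because each additional loop contributes independent obstructions of the kind handled by Lemmas~\ref{Alemma.5.3}--\ref{Alemma.5.7}, and the case analysis for which arc is first introduced by a boundary-incident flip ramifies accordingly; controlling this ramification is precisely why the authors leave the statement as a conjecture.
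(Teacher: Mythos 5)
The statement you are trying to prove is left as a conjecture in the paper -- the authors offer no proof -- and your proposal does not close it either: Step 3 openly defers the essential content (matching lower bounds $g(\Sigma^{(k)})\geq 4-2/k-o_k(1)$ and an ``every high-complexity surface contains some $\Sigma^{(k)}$'' lemma) to future combinatorial work, so what you have is a reduction strategy, not a proof. Beyond that, the reduction itself has concrete problems. First, the quantifier in your deduction is reversed: from the claim ``$c(\Sigma)>N_c\Rightarrow g(\Sigma)>c$'' you can only bound the complexity of surfaces with $g(\Sigma)=c_0$ by choosing $c\geq c_0$ (take $c=c_0$), not ``any $c$ with $c_0>c$''; and this leaves the value $c_0=4$ entirely uncovered, since no admissible $c<4$ is $\geq 4$ -- a priori infinitely many types could share the extremal rate $4$.

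Second, and more seriously, the combinatorial lemma you posit in Step 3 is false for the model surfaces you propose. A filling surface of large genus with no interior vertices and no non-privileged boundary components has arbitrarily large complexity $c(\Sigma)$, yet it admits no essential embedding of a disk with $k$ boundary loops (or with interior marked points): a boundary loop requires a vertex off the privileged boundary, and such a surface has none. So high complexity does not force containment of any $\Sigma^{(k)}$ from Theorems \ref{thm:uppermarkedintro}--\ref{thm:upperunmarkedintro}, and your key claim would require completely different extremal families adapted to genus -- precisely the regime where the paper states its methods ``are not able to say anything meaningful'' (cf.\ the torus question in the final section). Step 2's generalized monotonicity is plausible but also needs care: the invariance of the multi-arc under $\mcg(\Sigma_n)$ is not automatic when unmarked features can be permuted or when the complementary surface $\Sigma'$ carries topology that a homeomorphism could rearrange. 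In short, you have correctly identified the shape of an attack, but every load-bearing step is either open or, as formulated, incorrect.
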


\addcontentsline{toc}{section}{References}
\bibliographystyle{amsplain}
\bibliography{FlippinHell}

\providecommand{\bysame}{\leavevmode\hbox to3em{\hrulefill}\thinspace}
\providecommand{\MR}{\relax\ifhmode\unskip\space\fi MR }
\providecommand{\MRhref}[2]{%
  \href{http://www.ams.org/mathscinet-getitem?mr=#1}{#2}
}
\providecommand{\href}[2]{#2}
\begin{thebibliography}{10}

\bibitem{BridsonHaefliger1999}
M.~R. Bridson and A.~Haefliger, \emph{Metric spaces of non-positive curvature},
  Grundlehren der Mathematischen Wissenschaften [Fundamental Principles of
  Mathematical Sciences], vol. 319, Springer-Verlag, Berlin, 1999.

\bibitem{BrooksMakover2004}
R.~Brooks and E.~Makover, \emph{Random construction of {R}iemann surfaces}, J.
  Diff. Geom. \textbf{68} (2004), no.~1, 121--157.

\bibitem{DisarloParlier2014}
V.~Disarlo and H.~Parlier, \emph{The geometry of the flip graph and mapping
  class groups}, in preparation (2014).

\bibitem{FominShapiroThurston2008}
S.~Fomin, M.~Shapiro, and D.~Thurston, \emph{Cluster algebras and triangulated
  surfaces. {P}art {I}: {C}luster complexes}, Acta. Math. \textbf{201} (2008),
  83--146.

\bibitem{FominThurston2012}
S.~Fomin and D.~Thurston, \emph{Cluster algebras and triangulated surfaces.
  {P}art {II}: {L}ambda lengths}, arXiv:1210.5569 (2012).

\bibitem{FominZelevinsky2003}
S.~Fomin and A.~Zelevinsky, \emph{$\mbox{$Y$}$-systems and generalized
  associahedra}, Ann. Math. \textbf{158} (2003), 977--1018.

\bibitem{GelfandKapranovZelevinsky1990}
I.~M. Gel'fand, M.~M. Kapranov, and A.~V. Zelevinsky, \emph{Discriminants of
  polynomials of several variables and triangulations of {Newton} polyhedra},
  Leningrad Math. J. \textbf{2} (1990), 449--505.

\bibitem{KorkmazPapadopoulos2012}
M.~Korkmaz and A.~Papadopoulos, \emph{On the ideal triangulation graph of a
  punctured surface}, Ann. Inst. Fourier \textbf{62} (2012), no.~4, 1367--1382.

\bibitem{Lee1989}
C.~W. Lee, \emph{The associahedron and triangulations of the $n$-gon}, Eur. J.
  Comb. \textbf{10} (1989), 551--560.

\bibitem{DeLoeraRambauSantos2010}
J.~A.~{De} {Loera}, J.~Rambau, and F.~Santos, \emph{Triangulations: structures
  for algorithms and applications}, Algorithms and Computation in Mathematics,
  vol.~25, Springer, 2010.

\bibitem{Mosher1988}
L.~Mosher, \emph{Tiling the projective foliation space of a punctured surface},
  Trans. Am. Math. Soc. \textbf{306} (1988), no.~1, 1--70.

\bibitem{Penner1987}
R.~Penner, \emph{The decorated {T}eichm\"uller space of punctured surfaces},
  Comm. Math. Phys. \textbf{113} (1987), no.~2, 299--339.

\bibitem{Pournin2014}
L.~Pournin, \emph{The diameter of associahedra}, Adv. Math. \textbf{259}
  (2014), 13--42.

\bibitem{SleatorTarjanThurston1988}
D.~Sleator, R.~Tarjan, and W.~Thurston, \emph{Rotation distance,
  triangulations, and hyperbolic geometry}, J. Am. Math. Soc. \textbf{1}
  (1988), 647--681.

\bibitem{Stasheff1963}
J.~Stasheff, \emph{Homotopy associativity of {$H$-spaces}}, Trans. Am. Math.
  Soc. \textbf{108} (1963), 275--312.

\bibitem{Stasheff2012}
\bysame, \emph{How {I} `met' {Dov} {Tamari}}, Associahedra, Tamari Lattices and
  Related Structures, Progress in Mathematics, vol. 299, Birkh{\"a}user, 2012,
  pp.~45--63.

\bibitem{Tamari1954}
D.~Tamari, \emph{Monoides pr{\'e}ordonn{\'e}s et cha{\^i}nes de {Malcev}}, B.
  Soc. Math. Fr. \textbf{82} (1954), 53--96.

\end{thebibliography}

{\em Addresses:}\\
Department of Mathematics, University of Fribourg, Switzerland \\
LIAFA, Universit{\'e} Paris Diderot, Case 7014, 75205 Paris Cedex 13, France\\
{\em Emails:} \href{mailto:hugo.parlier@unifr.ch}{hugo.parlier@unifr.ch}, \href{mailto:lionel.pournin@liafa.univ-paris-diderot.fr}{lionel.pournin@liafa.univ-paris-diderot.fr}\\

\end{document}